\newcommand{\newsection}[1]{\setcounter{equation}{0} \section{#1}}
\newcommand{\vp}{\varphi}
\newcommand{\cla}{\mathcal{A}}
\newcommand{\clb}{\mathcal{B}}
\newcommand{\cld}{\mathcal{D}}
\newcommand{\cle}{\mathcal{E}}
\newcommand{\clf}{\mathcal{F}}
\newcommand{\clh}{\mathcal{H}}
\newcommand{\cli}{\mathcal{I}}
\newcommand{\clm}{\mathcal{M}}
\newcommand{\clp}{\mathcal{P}}
\newcommand{\clq}{\mathcal{Q}}
\newcommand{\cls}{\mathcal{S}}
\newcommand{\clw}{\mathcal{W}}
\newcommand{\clz}{\mathcal{Z}}
\newcommand\Zp{\mathbb{Z}_+^n}
\newcommand{\B}{\mathbb{B}}
\newcommand{\D}{\mathbb{D}}
\newcommand{\N}{\mathbb{N}}
\newcommand{\C}{\mathbb{C}}
\newcommand{\Z}{\mathbb{Z}}
\newcommand{\T}{\mathbb{T}}
\newcommand{\Sn}{\mathbb{S}^n}
\newcommand{\Sk}{K_n}
\newcommand{\raro}{\rightarrow}
\newcommand{\be}{\begin{equation}}
\newcommand{\ee}{\end{equation}}
\newcommand\la{{\langle }}
\newcommand\ra{{\rangle}}
\newtheorem{theorem}{Theorem}[section]
\newtheorem{lemma}[theorem]{Lemma}
\newtheorem{proposition}[theorem]{Proposition}
\newtheorem{corollary}[theorem]{Corollary}
\theoremstyle{definition}
\newtheorem{definition}[theorem]{Definition}
\newtheorem{example}[theorem]{Example}
\newtheorem{problem}{Problem}
\newtheorem{remark}[theorem]{Remark}
\numberwithin{equation}{section}
\newtheorem{thm}{Theorem}
\begin{document}
\title{Approximation, interpolation, and lifting on the unit ball}

\author[Bhattacharjee]{Jaydeep Bhattacharjee}
\address{Indian Statistical Institute, Statistics and Mathematics Unit, 8th Mile, Mysore Road, Bangalore, 560059,
India}
\email{jaydeepbhattacharjee98@gmail.com}

\author[Deepak]{Deepak K. D.}
\address{Department of Applied Mathematics, National Yang Ming Chiao Tung University, Hsinchu City, 30010, Taiwan}
\email{dpk.dkd@gmail.com }

\author[Sarkar]{Jaydeb Sarkar}
\address{Indian Statistical Institute, Statistics and Mathematics Unit, 8th Mile, Mysore Road, Bangalore, 560059,
India}
\email{jay@isibang.ac.in, jaydeb@gmail.com}


\subjclass[2010]{30E05, 46J15, 47A57, 30J05, 32A40, 32A65, 42B30, 15B05, 30H10, 33F05, 30J15, 43A90, 41A05, 42B15}

\keywords{Approximation, extremal functions, Blaschke products, inner functions, Toeplitz operators, perturbations, commutant lifting, Hardy space, unit ball, analytic functions,  interpolation}

\begin{abstract}
We solve the Nevanlinna-Pick interpolation problem on the open unit ball of the complex $n$-space. Our solutions signify the role of inner functions on the unit ball, objects whose existence was once considered uncertain. The results also reveal the importance of extremal functions, which emerge as natural analogues of finite Blaschke products. This viewpoint is illustrated by the Carath\'{e}odory approximation and Pick's theorems on the unit ball. We solve the commutant lifting problem, where both inner and extremal functions play a fundamental role. These results resolve several well-known problems on the unit ball.
\end{abstract}

\maketitle

\tableofcontents

\newsection{Introduction }\label{sec: introd}

For each $n \geq 1$, let $\B^n$ denote the open unit ball in $\C^n$:
\[
\B^n = \left\{z = (z_1, \ldots, z_n) \in \mathbb{C}^n: \sum_{i=1}^{n} |z_i|^2 < 1\right\}.
\]
We write $\mathbb{B}^1 = \mathbb{D}$. Let $H^\infty(\B^n)$ denote the space of all scalar-valued bounded analytic functions on $\B^n$. Equipped with the supremum norm $\|\cdot\|_\infty$, this space forms a commutative Banach algebra. Let the closed unit ball of $H^\infty(\B^n)$ be represented as $\cls(\B^n)$. Therefore,
\[
\cls(\B^n) = \{\vp \in H^\infty(\B^n): \|\vp\|_\infty \leq 1\}.
\]
Recall that $\|\vp\|_\infty := \displaystyle\sup_{z \in \B^n}|\vp(z)|$. The elements of $\cls(\B^n)$ are called \textit{Schur functions}.

By \textit{interpolation data} (or simply \textit{data}), we mean a pair $(\clz, \clw)$, where $\clz= \{z_i\}_{i=1}^m$ is a set of $m$ distinct points in $\B^n$, called the interpolation nodes, and $\clw= \{w_i\}_{i=1}^m$ is a collection of $m$ scalars in $\D$, called the target values. We simply write the data as $\clz$ and $\clw$.

\begin{problem}[Nevanlinna–Pick interpolation problem]\label{thm intro NP}
Given data $\clz = \{z_i\}_{i=1}^m \subseteq \mathbb{B}^n$ and $\clw = \{w_i\}_{i=1}^m \subseteq \mathbb{D}$, determine when there exists a function $\varphi \in \mathcal{S}(\mathbb{B}^n)$ such that
\[
\varphi(z_i) = w_i,
\]
for all $i=1, \ldots, m$.
\end{problem}

If such a function $\varphi \in \cls(\B^n)$ exists, then we say that the interpolation data $\clz$ and $\clw$ solve the \textit{Nevanlinna–Pick interpolation problem}, or simply \textit{the interpolation problem}. In this case, $\vp$ is called an \textit{interpolant}, and we say that $\vp$ \textit{interpolates} $\clz$ and $\clw$. 

The extremal problem imposes additional restrictions on the interpolants. We say that interpolation data $\clz = \{z_i\}_{i=1}^m \subseteq \mathbb{B}^n$ and $\clw = \{w_i\}_{i=1}^m \subseteq \mathbb{D}$ solve the \textit{extremal problem} if the data solve the interpolation problem with the property that
\[
\|\vp\|_\infty=1,
\]
for every $\varphi \in \cls(\B^n)$ that interpolates $\clz$ and $\clw$

\begin{problem}[Extremal problem]\label{thm intro EP}
Determine when a given data $\clz = \{z_i\}_{i=1}^m \subseteq \mathbb{B}^n$ and $\clw = \{w_i\}_{i=1}^m \subseteq \mathbb{D}$ solve the extremal problem.
\end{problem}

We collect all solutions to the extremal problem:
\[
\cle(\B^n) = \{\vp \in \cls(\B^n): \vp \text{ solves some extremal problem}\}.
\]
Functions in this class are called \textit{extremal functions}. Therefore, an extremal function is a solution to an extremal problem corresponding to some data $\clz = \{z_i\}_{i=1}^m \subseteq \mathbb{B}^n$ and $\clw = \{w_i\}_{i=1}^m \subseteq \mathbb{D}$ for some $m>1$. We emphasize that the one-point interpolation problem (that is, when $m=1$) can never be extremal.

The interpolation problems described above have long been of central importance in several areas of mathematics and certain branches of engineering. The problem of characterizing interpolation on $\B^n$ has remained open for many years, and in this paper, we provide a complete solution. Our approach, however, goes considerably further and solves many other well-known problems. We establish the significance of extremal functions on $\B^n$ and essentially prove that they play the role of finite Blaschke products and develop an approximation theory that, in particular, extends the classical Carath\'{e}odory theorem from $\D$ to $\B^n$. This is developed, in particular, as a consequence of our solution to the Pick problem on $\B^n$. We show that inner functions provide a powerful tool in interpolation and play a fundamental role in our development of the theory. Their importance in one-variable function theory is well known; however, on $\B^n$, even their existence was uncertain until the pioneering work of Aleksandrov \cite{Alek} (also see Rudin \cite{Rud new fn}).

We briefly review the classical results on interpolation problems. When $n = 1$, Problem \ref{thm intro NP} was solved independently by Nevanlinna \cite{Nevanlinna} and Pick \cite{G Pick} in the 1910s:

\begin{theorem}[Nevanlinna–Pick interpolation theorem on $\D$]
There exists $\vp \in \cls(\D)$ such that $\varphi(z_i) = w_i$ for all $i=1, \ldots, m$, if and only if the $m \times m$ \textit{Pick matrix}
\[
\Big[\frac{1 - w_i \overline{w_j}}{1 - z_i \overline{z_j}}\Big]_{m \times m},
\]
is positive semi-definite.
\end{theorem}

We now turn to the solution of Problem \ref{thm intro EP} in the classical case $n=1$. The interesting aspect is its connection to finite Blaschke products (cf. \cite[page 9, Corollary 2.3]{Garnett}):

\begin{theorem}[Extremal problem on $\D$]\label{thm E(D) = finite BP}
Interpolation data $\clz = \{z_i\}_{i=1}^m \subseteq \D$ and $\clw = \{w_i\}_{i=1}^m \subseteq \mathbb{D}$ solve the extremal problem if and only if the solution to the interpolation problem is unique. In this case, the unique solution is a finite Blaschke product. In particular,
\[
\cle(\D) = \{\text{finite Blaschke product}\}.
\]
\end{theorem}

Recall that a \textit{finite Blaschke product} is a function $b: \D \raro \D$ of the form
\[
b(z) = \lambda z^p \displaystyle\prod_{j=1}^q \frac{\overline{\alpha_j}}{|\alpha_j|} \frac{z - \alpha_j}{1 - \overline{\alpha}_j z},
\]
where $\lambda \in \T (= \partial \D)$, $\alpha_1, \ldots, \alpha_q \in \D$, and $p,q \in \Z_+$. Finite Blaschke products are essential in function theory. We highlight two results that illustrate their significance and are particularly relevant in our several variables context. The first is a classical result of Pick \cite{G Pick}:

\begin{theorem}[Pick's theorem on $\D$]\label{thm: Intro Pick}
Suppose the data $\clz = \{z_i\}_{i=1}^m \subseteq \D$ and $\clw = \{w_i\}_{i=1}^m \subseteq \mathbb{D}$ solve the interpolation problem. Then there exists
\[
b \in \cle(\D),
\]
that interpolates $\clz$ and $\clw$.
\end{theorem}

By Theorem \ref{thm E(D) = finite BP}, we know that $b$ is a finite Blaschke product. The second is a classical result of Carath\'{e}odory \cite{Carat}:

\begin{theorem}[Carath\'{e}odory's theorem on $\D$]\label{thm: Intro Carath}
Let $\vp \in \cls(\D)$. Then there exists a sequence $\{b_k\}_k \subseteq \cle(\D)$ such that
\[
\lim_{k \raro \infty}{b_k} = \vp,
\]
uniformly on compact subsets of $\D$.
\end{theorem}

Again, Theorem \ref{thm E(D) = finite BP} implies that $\{b_k\}_k$ is a sequence of finite Blaschke products.

A closely related result to the above Carath\'{e}odory theorem concerns inner functions on $\B^n$, particularly in the case $n>1$. Let $\Sn = \partial \B^n$. A function $\vp \in H^{\infty}(\B^n)$ is said to be \textit{inner} if
\[
|\vp(\zeta)| = 1,
\]
for almost every $\zeta \in \Sn$ with respect to the surface measure, where the boundary values are taken in the sense of $K$-limits (see \cite{Rud} or Section \ref{sec: quality char}). We write
\[
\cli(\B^n) = \{\vp \in H^{\infty}(\B^n): \vp \text{ is inner}\}.
\]
In one variable, finite Blaschke products are the simplest examples of inner functions. In contrast, on $\B^n$, $n > 1$, there is no comparable description or understanding of finite Blaschke products, let alone inner functions in general. Indeed, for a long time, it was widely conjectured that no nonconstant inner functions exist on $\B^n$, $n > 1$. This belief persisted until 1981, when Aleksandrov \cite{Alek} exhibited the existence of such functions (they are pathological); shortly thereafter, L\"{o}w \cite{L} independently reestablished their existence (also see Hakim and Sibony \cite{HS}; and see Rudin’s historical comments in \cite{Rud inner}). Even more surprisingly, despite their pathological nature, such functions exist in enormous abundance \cite{Alek, Rud inner}:

\begin{theorem}[Aleksandrov-Rudin theorem on $\B^n$]\label{thm: intro Alek Rud}
Let $\vp \in \cls(\B^n)$. Then there exists a sequence of inner functions $\{\vp_k\}_k \subseteq \cli(\B^n)$ such that
\[
\lim_{k \raro \infty}{\vp_k} = \vp,
\]
uniformly on compact subsets of $\B^n$, and
\[
\lim_{k \raro \infty} \int_{\Sn} \vp_k f \,d \sigma = \int_{\Sn} \vp f \,d \sigma,
\]
for all $f \in L^1(\Sn)$, where $d\sigma$ denotes the normalized surface measure on $\Sn$.
\end{theorem}

In contrast to the scarcity of concrete examples of inner functions, there is an abundance of extremal functions \cite{K}. We view extremal functions as the natural analogues of finite Blaschke products on $\B^n$. Indeed, when $n=1$, as pointed out in Theorem \ref{thm E(D) = finite BP}, these two classes coincide. From this perspective, we establish the following generalization of Pick's result from $\D$ to $\B^n$ (see Theorem \ref{thm: extr sol}). In what follows, unless otherwise stated, the case of $n = 1$ will also be included in the discussion.

\begin{thm}\label{thm: intro extr int}
If the data $\clz = \{z_i\}_{i=1}^m \subseteq \B^n$ and $\clw = \{w_i\}_{i=1}^m \subseteq \D$ solve the interpolation problem, then there exists an extremal function
\[
\vp \in \cle(\B^n),
\]
that interpolates the data $\clz$ and $\clw$.
\end{thm}

As an application of this result, we establish the following extension of Carath\'{e}odory's theorem from $\D$ to $\B^n$ (see Theorem \ref{thm:Car sv}):

\begin{thm}
Let $\vp \in \cls(\B^n)$. Then there is a sequence $\{\vp_k\}_k \subseteq \cle(\B^n)$ satisfying
\[
\lim_{k \raro \infty} {\vp_k} = \vp,
\]
uniformly on compact subsets of $\B^n$. Furthermore,
\[
\lim_{k \raro \infty} \int_{\Sn} \vp_k f \,d \sigma = \int_{\Sn} \vp f \,d \sigma,
\]
for all $f \in L^1(\Sn)$.
\end{thm}

Note that this result is also in the spirit of the Aleksandrov-Rudin theorem on $\B^n$, with the class of inner functions replaced by the class of extremal functions. The Aleksandrov-Rudin theorem, together with the above yields:
\[
\cls(\B^n) = \overline{\cli(\B^n)}^{(w^*, L^\infty(\Sn))} = \; \overline{\cle(\B^n)}^{(w^*, L^\infty(\Sn))}.
\]
Therefore, we now have two distinguished classes of functions that approximate Schur functions: $\cli(\B^n)$ and $\cle(\B^n)$. Theorem \ref{thm: intro extr int} highlights the role of $\cle(\B^n)$ in the interpolation problem, while the following result does the same for $\cli(\B^n)$ (see Theorem \ref{thm: NP interpolation inner}):

\begin{thm}
Let $\clz = \{z_i\}_{i=1}^m \subseteq \B^n$ and $\clw = \{w_i\}_{i=1}^m \subseteq \D$ be interpolation data. Then $\clz$ and $\clw$ solve the interpolation problem if and only if there exists a sequence of inner functions $\{\vp_k\}_k \subseteq \cli(\B^n)$ such that for each $j=1, \ldots, m$,
\[
\lim_{k \raro \infty} \vp_k(z_j) = w_j.
\]
\end{thm}

We remark that several results, examples, and counterexamples in this paper complement the theory of inner functions developed by Aleksandrov \cite{Alek} and Rudin \cite{Rud inner}. Extremal functions play a similarly important role throughout the paper.

The significance of the preceding result lies in the way it highlights the inherent complexity of the interpolation problem and its deep connection to the subtleties of function theory on $\B^n$, $n > 1$. In addition to the above, we present several other characterizations of the interpolation problem, which are collected in Theorem \ref{thm intro interp}.

One approach to our solution of the interpolation problem follows in the footsteps of Sarason \cite{D Sarason}: we first establish a commutant lifting theorem for $H^2(\B^n)$, the Hardy space over $\B^n$, and then use it to resolve the interpolation problem. We further apply this to characterize the extremal problem. It should be noted that the commutant lifting theorem for $H^2(\B^n)$, in the case $n>1$, is itself an important and challenging problem. Recall that $H^2(\B^n)$ consists of all analytic functions $f: \B^n \rightarrow \C$ satisfying
\[
\|f\|:= \Big( \sup_{0< r < 1} \int_{\Sn} |f(r \zeta_1, \ldots, r \zeta_n)|^2 \,d\sigma \Big)^{\frac{1}{2}} < \infty.
\]
Each $\vp \in H^\infty(\mathbb{B}^n)$ defines a multiplication operator $T_\vp$ on $H^2(\B^n)$ given by
\[
T_\vp f = \vp f \qquad (f \in H^2(\B^n)),
\]
with the property that $\|T_\vp\| = \|\vp\|_\infty$. The operator $T_\vp$ is also known as the \textit{analytic Toeplitz operator} with symbol $\vp$. The coordinate functions $\varphi = z_i$ give rise to the special multiplication operators $T_{z_i}$, $i=1, \ldots, n$. We have the fundamental commutation property:
\[
\{T_{z_1}, \ldots, T_{z_n}\}' = \{T_\vp: \vp \in H^\infty(\B^n)\}.
\]
A closed subspace $\clq$ of $H^2(\B^n)$ is called a \textit{quotient module} \cite{CG, DP, GZ} if for each $i=1, \ldots, n$,
\[
T_{z_i}^* \clq \subseteq \clq,
\]
Given a Hilbert space $\clh$, we denote by $\clb(\clh)$ the space of all bounded linear operators on $\clh$, and we set
\[
\clb_1(\clh) = \{T \in \clb(\clh): \|T\| \leq 1\}.
\]
Given a quotient module $\clq \subseteq H^2(\B^n)$ and a function $\varphi \in H^\infty(\B^n)$, we define $S_\vp \in \clb_1(\clq)$ by
\[
S_\vp = P_\clq T_\vp|_{\clq},
\]
where $P_\clq$ denotes the orthogonal projection of $H^2(\B^n)$ onto $\clq$ \cite{DP}. We always assume that
\[
\clq \neq \{0\}.
\]
Of particular interest are the operators $S_{z_i} \in \clb_1(\clq)$, where
\[
S_{z_i} f = P_\clq (z_i f),
\]
for all $f \in \clq$ and $i=1, \ldots, n$. We say that an operator $X \in \clb(\clq)$ is a \textit{module map} if
\[
X S_{z_i} = S_{z_i} X,
\]
for all $i=1, \ldots, n$.
With this terminology and structure in place, we can now formulate the commutant lifting problem as follows:

\begin{problem}\label{prob: CLT}
Let $\clq$ be a quotient module of $H^2(\B^n)$, and let $X \in \clb_1(\clq)$ be a module map. When does there exist a Schur function $\vp \in \cls(\B^n)$ such that
\[
X = S_\vp?
\]
\end{problem}

Equivalently, this is a question about the commutativity of the following diagram, together with the additional requirement that $\vp \in \cls(\B^n)$:
\[
\xymatrix{
H^2(\B^n) \ar@{->}[rr]^{\displaystyle T_{\vp}} \ar@{<-}[dd]_{\displaystyle i_{\clq}}
&& H^2(\B^n) \ar@{->}[dd]^{\displaystyle i^*_{\clq}}    \\ \\
\clq \ar@{->}[rr]_{\displaystyle X} && \clq
}
\]
where $i_\clq : \clq \longrightarrow H^2(\B^n)$ denotes the inclusion map (here we note that $S_\vp = i_\clq^* T_\vp i_\clq$). If such an operator $X \in \clb_1(\clq)$ admits a representation $X = S_\vp$ for some $\vp \in \cls(\B^n)$, then we say that $X$ admits a \textit{lift}, or that $X$ admits a \textit{lift to $T_\vp$}, or simply that $X$ is \textit{liftable}.

When $n = 1$, Sarason's seminal work asserts that every such module map $X \in \clb_1(\clq)$ admits a lift \cite{D Sarason}. Moreover, $X$ admits a \textit{norm preserving} lift:

\begin{definition}
Let $\clq \subseteq H^2(\B^n)$ be a quotient module. A module map $X \in \clb_1(\clq)$ is said to admit a norm preserving lift if there exists $\varphi \in \cls(\B^n)$ such that $X = S_\varphi$ and
\[
\|X\| = \|\varphi\|_\infty.
\]
In this case, $S_\vp$ is said to be a norm-preserving lift of $X$.
\end{definition}

Sarason applied his lifting theorem to certain finite-dimensional quotient modules, thereby recovering the Nevanlinna–Pick interpolation theorem. Now we turn to several variables.

First we establish the essential tools that would be needed for the lifting theorem. In view of $K$-limits, we shall, when convenient, identify $H^2(\B^n)$ with $H^2(\Sn)$ without further explanation (cf. Section \ref{sec: quality char}), where
\[
H^2(\Sn) = \overline{\mathbb{C}[z_1, \ldots, z_n]}^{L^2(\Sn)}.
\]
For $S \subseteq L^2(\Sn)$, let $S^{conj}$ denote the set of all conjugate functions of elements of $S$:
\[
S^{conj} = \{\overline{f}: f \in S\},
\]
where $\overline{f}$ denotes the conjugate of $f$. Next, we define the space of ``mixed functions'' (also see \cite{KD}):
\[
\clm(\Sn):= L^2(\Sn) \ominus [H^2(\Sn) + H^2(\Sn)^{conj}].
\]
Note that $\clm(\mathbb{T}) = \{0\}$ (recall that $\mathbb{S}^1 = \partial \mathbb{B} = \mathbb{T}$). This space admits a convenient decomposition in terms of harmonic homogeneous polynomials \cite[Chapter 12]{Rud}, which will be outlined in Section \ref{sec: remark}. Define the space of Hardy functions ``vanishing at $0$'' as
\[
H^2_0(\Sn) = H^2(\Sn) \ominus \{1\}.
\]
Let $\clq$ be a quotient module of $H^2(\B^n)$, which, in view of $K$-limits, we may regard as a closed subspace of $H^2(\Sn)$, as noted above. We consider $\clq^{conj}$, $\clm(\Sn)$, and $H^2_0(\Sn)$ as subspaces of $L^1(\Sn)$ and define
\[
\clm_{\clq} = \clq^{conj} \dot+ [\clm(\Sn) \dot+ H^2_0(\Sn)].
\]
where $\dot+$ denotes the skew sum in the Banach space $L^1(\Sn)$.

We now return to the inner and extremal functions. Throughout the paper, the \textit{approximate class} $\cla(\B^n)$ denotes either $\cli(\B^n)$ or $\cle(\B^n)$. Let $S \subseteq L^1(\Sn)$, $\varphi \in L^\infty(\Sn)$, and let $\{\vp_k\}$ be a sequence in $\cla(\B^n)$. We say that
\[
w^*-\lim_{k \raro \infty} \vp_k = \vp \text{ on } S,
\]
if for every $f \in S$, we have
\[
\lim_{k \raro \infty} \int_{\Sn} \vp_k f d\sigma = \int_{\Sn} \vp f d\sigma.
\]

The following is a summary of all the commutant lifting theorems obtained in this paper. These results also apply to the case $n = 1$ (but there will be exceptions like Theorem \ref{inner ball}), and hence many of them are new even in the classical setting.

\begin{thm}\label{thm: all CLT}
Let $\clq$ be a quotient module of $H^2(\B^n)$, and let $X \in \clb_1(\clq)$ be a nonzero module map. Set
\[
\psi = X (P_\clq 1).
\]
The following conditions are equivalent:
\begin{enumerate}
\item (Commutant lifting) $X$ admits a lift.
\item (Perturbations) There exists $\vp \in \cls(\B^n)$ such that
\[
\psi - \vp \in \clq^{\perp}.
\]
Moreover, in this case $X = S_{\vp}$.
\item (Qualitative property) $X_{\clq}: (\clm_{\clq}, \|.\|_{1}) \longrightarrow \mathbb{C}$ is a contraction, where
\[
X_{\clq} f = \int_{\Sn} \psi f d\sigma \qquad (f \in \clm_{\clq}).
\]
\item (Quantitative property) $\text{dist}_{L^1(\Sn)}\Big(\frac{\overline \psi}{\|\psi\|_2^2}, \widetilde{\clm_{\clq}} \Big) \geq 1$, where
\[
\widetilde{\clm_{\clq}} = [\clq^{conj} \ominus \{\overline{\psi}\}] \dot+ [\clm(\Sn) \dot+ H^2_0(\Sn)].
\]

\item (Approximate functions - I) There exists a sequence $\{\vp_k\} \subseteq \cla(\B^n)$ such that
\[
w^*-\lim_{k \raro \infty} \vp_k = \psi \text{ on }\clm_\clq.
\]

\item (Approximate functions - II) There exists a sequence $\{\vp_k\} \subseteq \cla(\B^n)$ such that
\[
w^*-\lim_{k \raro \infty} \vp_k = \psi \text{ on } \clq^{conj}.
\]
\end{enumerate}
\end{thm}

In the context of the above theorem, we further point out that Lemma \ref{lemm: ker XQ} yields the following identity:
\[
\ker X_\clq = \widetilde{\clm_{\clq}}.
\]

The equivalence of (1) and (2) is less involved, yet it plays a central role in establishing the remaining equivalence conditions as well as several other results in this paper. Moreover, the perturbation property in (2) is connected, perhaps coincidentally, with Rudin’s perturbation theorem for inner functions \cite{Rud inner}, a connection that we will explore further in Section \ref{Sec: examples}.

The above quantitative and qualitative properties of lifting also appear in the polydisc setting \cite{KD}, which we will elaborate on in Section \ref{sec: remark}.

Following Sarason \cite{D Sarason}, a natural question is whether module maps on quotient modules of $H^2(\B^n)$, $n > 1$, admit norm-preserving liftings. The situation in several variables, however, changes drastically. A function-theoretic result of Rudin concerning the Smirnov class for $n>1$ yields the following somewhat unexpected consequence (see Corollary \ref{cor: norm preserving}):

\begin{thm}
Let $X$ be a nonconstant module map acting on a finite-dimensional quotient module $\clq \subseteq H^2(\B^n)$. If
\[
n > 1,
\]
then $X$ does not admit a norm-preserving lift.
\end{thm}

In fact, we have the following characterization of norm-preserving liftings of module maps on finite-dimensional quotient modules (see Theorem \ref{lemma: Xq X norm} and Corollaries \ref{lemma: Xq less X} and \ref{remark: single variable xq x}):

\begin{thm}
Suppose $n \geq 1$. Let $\clq \subseteq H^2(\B^n)$ be a finite-dimensional quotient module and let $X \in \clb_1(\clq)$ be a nonconstant module map. Consider the functional $X_\clq: (\clm_\clq, \|\cdot\|_1) \raro \mathbb{C}$ as in the qualitative part of Theorem \ref{thm: all CLT}. The following are equivalent:
\begin{enumerate}
\item $X$ admits a norm preserving lift.
\item $\|X\| = \|X_{\clq}\|$.
\item $n=1$.
\end{enumerate}
\end{thm}

A key ingredient in the proof of the above theorem, and indeed of many subsequent results in this paper, is the norm formula for the functional $X_\clq$. Namely, in the setting of Theorem \ref{thm: all CLT}, if $X\in\clb_1(\clq)$ admits a lift, then Theorem \ref{thm: norm XQ} implies
\[
\|X_\clq\| = \inf\{\|\vp\|_\infty: \vp \in \cls(\B^n) \text{ and } X = S_\vp\}.
\]
In other words, $\|X_\clq\|$ equals the minimum possible $\cls(\B^n)$-norm among all symbols representing $X$. This identity is intriguing and shows that the choice of the functional $X_\clq$ is appropriate and holds further potential for related problems.

Now we turn to a detailed presentation of our solution to the interpolation problem. It is important to emphasize that a Pick matrix-type characterization for Schur function interpolants is generally not expected in several variables, since such criteria are intrinsically tied to complete Nevanlinna–Pick kernels \cite{AM 2000, DMS, Mc, PQ}. In contrast, the Szeg\"{o} kernel $K_n$ on $\B^n$, which is central in the Schur function framework, is not a complete Nevanlinna–Pick kernel except for the case of $n=1$. Recall that $\Sk: \B^n \times \B^n \rightarrow \C$ is defined by
\[
\Sk(z, w) = (1 - \langle z, w \rangle)^{-n},
\]
where $\langle z, w \rangle = \sum_{i=1}^{n} z_i \overline{w_i}$ for all $z, w \in \B^n$. For each $w \in \B^n$, the \textit{Szeg\"{o} kernel function} $\Sk(\cdot, w) \in H^2(\B^n)$ is defined by
\[
(\Sk(\cdot, w))(z) = \Sk(z, w),
\]
for all $z \in \B^n$. Given interpolation data $\clz = \{z_i\}_{i = 1}^m \subseteq \B^n$ and $\{w_i\}_{i = 1}^m \subseteq \D$, define $m$-dimensional quotient module $\clq_\clz \subseteq H^2(\B^n)$ by
\[
\clq_\clz = \text{span}\{\Sk(\cdot, z_j): j =1, \ldots, m\}.
\]
Also, define a function
\begin{equation}\label{eqn: sect 0 psi}
\psi_{\clz, \clw} = \sum_{j = 1}^m c_j \Sk(\cdot, z_j).
\end{equation}
where the scalars $\{c_j\}_{j=1}^m$ are given by
\[
\begin{bmatrix}
c_1
\\
\vdots
\\
c_m
\end{bmatrix} =
\begin{bmatrix}
\Sk(z_1, z_1) & \dots & \Sk(z_1, z_m)
\\
\vdots &  \ddots & \vdots
\\
\Sk(z_m, z_1) & \dots & \Sk(z_m, z_m)
\end{bmatrix}^{-1} \begin{bmatrix}
w_1
\\
\vdots\\
w_m
\end{bmatrix}.
\]
The inverse makes sense, since the matrix in question is a Gram matrix. There are two notable features that immediately make this function particularly distinguished:
\begin{enumerate}
\item $\psi_{\clz, \clw}$ interpolates the prescribed data; that is, for all $i=1, \ldots, m$,
\[
\psi_{\clz, \clw}(z_i) = w_i.
\]
\item $\psi_{\clz, \clw}$ is a rational function whose poles all lie outside $\overline{\mathbb{B}^n}$.
\end{enumerate}
This function plays a significant role in a number of interpolation results developed in this paper, which are summarized in the following theorem:

\begin{thm}\label{thm intro interp}
Let $\clz = \{z_i\}_{i = 1}^m \subseteq \B^n$ and $\clw = \{w_i\}_{i = 1}^m \subseteq \D$ be interpolation data. Assume that $\clw \neq \{0\}$. Consider the function
\[
\psi_{\clz, \clw} = \sum_{j = 1}^m c_j \Sk(\cdot, z_j),
\]
as defined in \eqref{eqn: sect 0 psi}, where the scalars $\{c_1, \ldots, c_m\}$ are given by
\[
\begin{bmatrix}
c_1
\\
\vdots
\\
c_m
\end{bmatrix}
=
\begin{bmatrix}
\Sk(z_i, z_j)
\end{bmatrix}_{m \times m}^{-1}
\begin{bmatrix}
w_1
\\
\vdots\\
w_m
\end{bmatrix},
\]
and define
\[
\widetilde{\clm_{\clq_{\clz}}} = [\clq_{\clz}^{conj} \ominus \{ \overline{\psi_{\clz, \clw}}\}] \dot+[\clm(\Sn) \dot+ H^2_0(\Sn)].
\]
The following conditions are equivalent:
\begin{enumerate}
\item (Interpolation) $\clz$ and $\clw$ solve the interpolation problem.
\item (Qualitative property) $\Pi_{\clz, \clw} : (\clm_{\clq_{\clz}},\|\cdot\|_1) \raro \C$ is a contraction, where
\[
\Pi_{\clz, \clw} f = \int_{\Sn} \psi_{\clz, \clw} f d\sigma,
\]
for all $f \in \clm_{\clq_{\clz}}$, and
\[
\clm_{\clq_{\clz}} = \clq_{\clz}^{conj} \dot+ [\clm(\Sn) \dot+ H^2_0(\Sn)].
\]
\item (Quantitative property - I) $\text{dist}_{L^1(\Sn)} \Big(\frac{\overline{\psi_{\clz, \clw}}}{\|\psi_{\clz, \clw}\|_2^2}, \widetilde{\clm_{\clq_{\clz}}}\Big) \geq 1$.

\item (Quantitative property - II) $\text{dist}_{L^1(\Sn)} \left(\frac{1}{\sum_{i=1}^{m}|w_i|^2} \displaystyle \sum_{j=1}^m \overline{w_j} \overline{\Sk(\cdot, {z_j})}, \widetilde{\clm_{\clq_{\clz}}} \right) \geq 1$.

\item (Inner functions) There exists a sequence $\{\vp_k\}_k \subseteq \cli(\B^n)$ such that
\[
\lim_{k \raro \infty} \vp_k(z_j) = w_j,
\]
for all $j=1, \ldots, m$.
\end{enumerate}
\end{thm}

The distance formulas appearing in the quantitative parts of the above theorem, as well as in Theorem \ref{thm: all CLT}, is reminiscent of the classical Nehari theorem \cite{Nehari} (see the discussion following Corollary \ref{thm: NP interpolation distance}). Moreover, the equality case of the distance formulas has the following important consequence (see Theorem \ref{thm: extremal}):

\begin{thm}\label{thm intro: extreme}
Let $m > 1$ and let $\clz=\{z_i\}_{i=1}^m \subset \B^n$ and $\clw= \{w_i\}_{i=1}^m \subset \D$ be interpolation data. Consider the notation introduced in Theorem \ref{thm intro interp}. Then the following are equivalent:
\begin{enumerate}
\item $\clz$ and $\clw$ solve the extremal problem.
\item $\|\Pi_{\clz, \clw}\| = 1$.
\item $\text{dist}_{L^1(\Sn)}\Big(\frac{\overline{\psi_{\clz, \clw}}}{\|\psi_{\clz, \clw}\|_2^2}, \widetilde{\clm_{\clq_{\clz}}}\Big) = 1$.
\item $\text{dist}_{L^1(\Sn)} \left(\frac{1}{\sum_{i=1}^{m}|w_i|^2} \displaystyle \sum_{j=1}^m \overline{w_j} \overline{\Sk(\cdot, {z_j})}, \widetilde{\clm_{\clq_{\clz}}} \right) = 1$.
\end{enumerate}
\end{thm}

Our technique reveals a large class of extremal problems and functions. Indeed, given any nonconstant Schur function $\vp \in \cls(\B^n)$ and a set $\clz = \{z_i\}_{i=1}^m \subset \B^n$, one may conclude that $\clz = \{z_i\}_{i=1}^m$ and $\clw = \{w_i\}_{i=1}^m$ solve the interpolation problem, where
\[
w_i = \vp(z_i),
\]
for all $i=1, \ldots, m$. Therefore, constructing a solvable interpolation problem is relatively straightforward. The following result shows that a solvable extremal problem can likewise be constructed from a solvable interpolation problem. This construction relies on a normalization procedure and the commutant lifting theorem developed in this paper (see Theorem \ref{thm: int vs ext} for more details):

\begin{thm}\label{thm intro: int vs ext}
Let $m > 1$. Suppose $\clz = \{z_i\}_{i=1}^m \subset \B^n$ and $\clw = \{w_i\}_{i=1}^m \subset \D$ are interpolation data, with the $w_i$ not all equal. If $\clz$ and $\clw$ solve the interpolation problem, then $\clz$ and $\widehat{\clw}$ solve the extremal interpolation problem, where
\[
\widehat{\clw} = \{\widehat{w}_1, \ldots, \widehat{w}_m\} \subset \D,
\]
and
\[
\widehat{w}_j = \frac{1}{\|\Pi_{\clz, {\clw}}\|} w_j,
\]
for all $j=1, \ldots, m$, with $\Pi_{\clz, \clw}$ as in part (2) of Theorem \ref{thm intro interp}.
\end{thm}

We adopt the above normalization technique and apply our lifting theorem to obtain the following result, which directly connects solutions to the interpolation problem with solutions to the extremal problem (see Corollary \ref{cor: int vs ext}). 

\begin{thm}\label{thm intro: extrm int}
Let $m>1$. Let $\clz = \{z_i\}_{i=1}^m \subset \B^n$ and $\clw = \{w_i\}_{i=1}^m \subset \D$ be interpolation data, with the $w_i$ not all equal, and suppose that $\clz$ and $\clw$ solve the interpolation problem. Define
\[
\lambda_{\clz, \clw} = \inf \{\|\eta\|_\infty: \eta \in \cls(\B^n) \text{ interpolates } \clz \text{ and } \clw\}.
\]
Then there exists an extremal function $\vp \in \cle(\B^n)$ such that
\[
\lambda_{\clz, \clw} \vp,
\]
interpolates $\clz$ and $\clw$.
\end{thm}

In other words, $\lambda_{\clz,\clw}\vp$ is a minimum-norm interpolant for the data $\clz$ and $\clw$.

Kosi\'{n}ski and Zwonek \cite[Theorem 2]{K} provide a set of rational functions of degree at most two that yield a solution (up to biholomorphism) to each three-point extremal problem. As a result, whenever a three-point interpolation problem is solvable, Theorem \ref{thm intro: extrm int} yields an explicit interpolant obtained by scaling a function from the extremal solution set of Kosi\'{n}ski and Zwonek by a factor in $(0,1]$ (see Theorem \ref{thm: 3point rational} for more details).

We remark that the theory of Hilbert function spaces in several variables is of different levels of intricacy on $\B^n$ than on the polydisc $\D^n$. While solutions to the interpolation and commutant lifting problems in the polydisc setting have been developed in the recent paper \cite{KD}, as well as in several earlier works that established results of considerable depth and influence \cite{AM 1999, Ball}, the literature contains virtually no comparable attempts, let alone partial results, within the framework of Schur functions on $\B^n$ (however, see \cite{Amar, Cole, K}). This disparity also reflects the greater complexity of the function theory associated with Hilbert function spaces on $\B^n$ (see Section \ref{sec: remark} for more comments).

In particular, the connection between quotient modules of $H^2(\B^n)$ and inner functions developed here reveals a fundamental distinction between $\B^n$ and $\D^n$. Subsequently, a central theme of this paper is the study of the interplay between interpolation problems, inner and extremal functions, and their approximation properties on $\B^n$. This interplay reflects the inherent complexity of Hilbert function space theory on $\B^n$.

The remainder of the paper is organized as follows. Section \ref{sec: Carath approx} is devoted to the Pick and Carath\'{e}odory theorems on $\B^n$. Sections \ref{sec: pert}, \ref{sec: quality char}, \ref{sec: quant char}, and \ref{Sec: inner fn} present several characterizations of commutant lifting. Section \ref{sec: interpolation} addresses solutions to the interpolation problem, while Section \ref{sec: Carath interp} provides some solutions to the Carath\'{e}odory-Fej\'{e}r interpolation problem. Norm-preserving lifting is discussed in Section \ref{sec: norm pres lift}, and Section \ref{Sec: ext inter} is devoted to solutions to the extremal problem. Sections \ref{Sec: examples} and \ref{Sec: 3 point int} are mostly devoted to examples of lifting and interpolation. The paper concludes with a final remarks section in Section \ref{sec: remark}.

\newsection{Pick and Carath\'{e}odory theorems}\label{sec: Carath approx}

In this section, we establish two results that extend the classical theorems of Pick and Carath\'{e}odory from $\D$ to $\B^n$. We begin by recalling the extremal problem (see also Section \ref{sec: introd}).

Given interpolation data $\clz = \{z_i\}_{i=1}^m \subseteq \B^n$ and $\clw = \{w_i\}_{i=1}^m \subseteq \D$, we say that $\clz$ and $\clw$ solve the \textit{extremal problem} (or, more specifically, the \textit{$m$-point extremal problem}) if:
\begin{enumerate}
\item (Interpolation) $\clz$ and $\clw$ solve the interpolation problem.
\item (Extremal) If $\vp \in \cls(\B^n)$ interpolates $\clz$ and $\clw$, then
\[
\|\vp\|_\infty = 1.
\]
\end{enumerate}

For each natural number $m \in \mathbb{N}$, we denote by $\cle_m(\B^n)$ the set of all solutions to the $m$-point extremal problems:
\[
\cle_m(\B^n) = \{\varphi \in \cls(\B^n) : \varphi \text{ solves some } m\text{-point extremal problem}\}.
\]
In this context, it is important to observe that
\[
\cle_1(\B^n) = \emptyset.
\]
Define
\[
\cle(\B^n) = \bigcup_{m \in \mathbb{N}} \cle_m(\B^n).
\]
Recall that elements of $\cle(\B^n)$ are called \textit{extremal functions}. The classical one-variable case $n=1$ is of particular interest. In this case, we know that (recall Theorem \ref{thm E(D) = finite BP})
\[
\cle(\D) = \{\text{finite Blaschke products}\},
\]
while Pick's theorem yields another interpretation of the set $\cle(\D)$ (see Theorem \ref{thm: Intro Pick}): If the data $\clz = \{z_i\}_{i=1}^m \subseteq \D$ and $\clw = \{w_i\}_{i=1}^m \subseteq \D$ solve the interpolation problem, then there exists
\[
b \in \cle(\D),
\]
such that $b$ interpolates $\clz$ and $\clw$. As noted above, $b$ is a finite Blaschke product. In several variables, however, there is no satisfactory analogue of finite Blaschke products. We therefore replace finite Blaschke products by the class of extremal functions. Viewed in this light, the following theorem may be regarded as an extension of Pick's theorem from $\D$ to $\B^n$.

\begin{theorem}\label{thm: extr sol}
Let $\clz = \{z_i\}_{i=1}^m \subseteq \B^n$ and $\clw = \{w_i\}_{i=1}^m \subseteq \D$ be data. If $\clz$ and $\clw$ solve the interpolation problem, then there exists an extremal function
\[
\vp \in \cle(\B^n),
\]
that interpolates $\clz$ and $\clw$.
\end{theorem}
\begin{proof}
Without loss of generality, assume that the interpolation problem is not extremal. Define
\[
\cls_{\clz,\clw} = \{\varphi \in \cls(\B^n): \varphi \text{ interpolates } \clz \text{ and } \clw \}.
\]
Since $\clz$ and $\clw$ solve the interpolation problem, by our assumption,
\[
\cls_{\clz,\clw} \neq \emptyset.
\]
Pick
\[
z_{m+1} \in \B^n \setminus \{z_1, \dots, z_m\},
\]
and define
\[
M = \sup_{\varphi \in \cls_{\clz,\clw}} |\varphi(z_{m+1})|.
\]
Clearly, $M \leq 1$ (as $\cls_{\clz,\clw} \subseteq \cls(\B^n)$). We claim that
\[
M > 0.
\]
Since the interpolation problem in non-extremal, there exists $\varphi \in \cls_{\clz,\clw}$ such that
\[
\|\varphi\|_{\infty} < 1.
\]
If $\varphi(z_{m+1}) \neq 0$, then $M > 0$. Therefore, assume that
\[
\varphi(z_{m+1}) = 0.
\]
Choose a polynomial $p \in \C[z_1, \dots , z_n]$ such that
\[
\|p\|_{\infty} = 1,
\]
and
\[
p(z_i) = 0,
\]
for all $i=1, \ldots, m$, and
\[
p(z_{m+1}) \neq 0.
\]
Since $\|\varphi\|_{\infty} < 1$, there exists $t > 0$ such that
\[
\|\widetilde{\varphi}\|_{\infty} < 1,
\]
where
\[
\widetilde{\varphi} := \varphi + t p
\]
Clearly, $\widetilde{\varphi}$ interpolates $\clz$ and $\clw$; that is, $\widetilde{\varphi} \in \cls_{\clz,\clw}$. Also, $\widetilde{\varphi}(z_{m+1}) = t p(z_{m+1})$ implies that $\widetilde{\varphi}(z_{m+1}) \neq 0$. Then
\[
M \geq \big|\widetilde{\varphi}(z_{m+1})\big| > 0,
\]
proving the claim that $M > 0$. There exists a sequence $\{\varphi_r\}_r \subseteq \cls_{\clz,\clw}$ such that
\[
\lim_{r \raro \infty} |\varphi_r (z_{m+1})| = M.
\]
Since $\|\varphi_r\|_{\infty} \leq 1$, for $r \geq 1$, the sequence $\{\varphi_r\}_r$ is uniformly bounded. Hence, by Montel's theorem, there exists a subsequence $\{\varphi_{r_{k}}\}$ that converges uniformly on compact subsets of $\B^n$ to some function $\theta$. By Weierstrass' theorem, it follows that $\theta$ is analytic on $\B^n$. On the other hand, since $|\varphi_{r_{k}}(z)| \leq 1$ for all $k$, we have $|\theta(z)| \leq 1$ for all $z \in \B^n$, that is,
\[
\theta \in \cls(\B^n).
\]
Moreover, $\{\varphi_{r_k}\} \subseteq \cls_{\clz,\clw}$ implies
\[
\varphi_{r_{k}}(z_j) = w_j,
\]
for all $k \in \mathbb{N}$ and for all $j=1, \ldots, m$. Since $\varphi_{r_{k}}$ converges pointwise to $\theta$, it follows that
\[
\varphi_{r_{k}}(z_j)  \raro \theta(z_j),
\]
and therefore
\[
\theta(z_j) = w_j,
\]
for all $j=1, \ldots, m$. This proves that $\theta \in \cls_{\clz,\clw}$. Let
\[
w_{m+1} := \theta(z_{m+1}),
\]
and define data $\widetilde{\clz}$ and $\widetilde{\clw}$ by
\[
\widetilde{\clz} = \clz \cup \{z_{m+1}\},
\]
and
\[
\widetilde{\clw} = \clw \cup \{w_{m+1}\}.
\]
Clearly, $\theta \in \cls(\B^n)$ interpolates $\widetilde{\clz}$ and $\widetilde{\clw}$. We need to prove that $\widetilde{\clz}$ and $\widetilde{\clw}$ solve the extremal problem. Suppose $\eta \in \cls(\B^n)$ interpolates $\widetilde{\clz}$ and $\widetilde{\clw}$. We claim that $\|\eta\|_{\infty} = 1$. Suppose, to the contrary, that
\[
\|\eta\|_{\infty}<1.
\]
Once again, choose a polynomial $p\in \C[z_1,\ldots,z_n]$ such that
\[
\|p\|_{\infty}=1,
\]
and
\[
p(z_i)=0,
\]
for all $i=1, \ldots, m$, and
\[
p(z_{m+1})\neq 0.
\]
On the other hand,
\[
\varphi_{r_{k}}(z_{m+1}) \raro \theta(z_{m+1}) = w_{m+1},
\]
together with
\[
|\varphi_{r_{k}}(z_{m+1})| \raro M,
\]
implies
\[
|w_{m+1}| = |\theta(z_{m+1})| = M.
\]
Therefore,
\[
\left|\frac{w_{m+1}}{p(z_{m+1})}\right| > 0.
\]
Since $1 - \|\eta\|_{\infty}>0$, we may choose $\lambda>0$ so that
\[
0 < \lambda\left|\frac{w_{m+1}}{p(z_{m+1})}\right| < 1 - \|\eta\|_{\infty}.
\]
Set
\[
\lambda_0=\lambda \frac{w_{m+1}}{p(z_{m+1})},
\]
and define
\[
\psi=\eta+\lambda_0 p.
\]
For $i=1, \ldots, m$, we have $\psi(z_i) = w_i$, and also
\[
\|\psi\|_{\infty} \leq \|\eta\|_{\infty} + |\lambda_0| < \|\eta\|_{\infty} + 1 - \|\eta\|_{\infty} = 1.
\]
Thus $\psi \in \cls_{\clz,\clw}$. Moreover, we have
\[
\begin{split}
\big| \psi(z_{m+1}) \big| & = \big| \eta(z_{m+1}) + \lambda_0 p(z_{m+1}) \big|
\\
& = \big| w_{m+1} + \lambda_0 p(z_{m+1}) \big|
\\
& = \big| w_{m+1} + \lambda w_{m+1} \big|
\\
& = (1 + \lambda) \big| w_{m+1} \big|
\\
& > \big| w_{m+1} \big|
\\
& = |\theta(z_{m+1})|.
\end{split}
\]
Therefore, we have
\[
M = \sup_{\varphi \in \cls_{\clz,\clw}} |\varphi(z_{m+1})| = |\theta(z_{m+1})| < \big| \psi(z_{m+1}) \big|,
\]
which contradicts to the fact that $\psi \in \cls_{\clz,\clw}$. Therefore,
\[
\|\eta\|_{\infty} = 1.
\]
By construction, $\theta \in \cls_{\widetilde{\clz}, \widetilde{\clw}}$. Since the data $\widetilde{\clz}$ and $\widetilde{\clw}$ solve the extremal problem, it follows that $\theta$ is an extremal function. In particular, $\theta \in \cle_{m+1}(\B^n)$. Consequently, $\theta \in \cle(\B^n)$ and $\theta$ interpolates $\clz$ and $\clw$. This completes the proof of the theorem.
\end{proof}

The ``one-point extension'' technique in the proof bears some resemblance to an argument of Garnett \cite[p.~133, Corollary 1.9]{Garnett}. It is therefore worthwhile to compare the present theorem with Garnett's result. Such a comparison is especially intriguing because Garnett's argument is based on the theory of dual extremal problems, a line of investigation originating in the work of Havinson \cite{Havinson 1, Havinson 2}, Macintyre and Rogosinski \cite{Rog 1}, and Rogosinski and Shapiro \cite{Rog 2}.

We now prove a Carath\'{e}odory theorem on the ball. Recall that a sequence of functions $\{\vp_k\}_k \subseteq \cle(\B^n)$ is said to converge to a function $\vp$ in the weak*-topology of $L^{\infty}(\Sn)$ if
\[
\lim_{k \raro \infty} \int_{\Sn} \vp_k f d\sigma = \int_{\Sn} \vp f d\sigma,
\]
for all $f \in L^1(\Sn)$.

\begin{theorem}\label{thm:Car sv}
Let $\theta \in \cls(\B^n)$. Then there is a sequence $\{\vp_k\}_k \subseteq \cle(\B^n)$ such that
\[
\lim_{k \raro \infty} {\vp_k}= \theta,
\]
uniformly on compact subsets of $\B^n$. Moreover, this convergence also holds in the weak*-topology of $L^{\infty}(\Sn)$.
\end{theorem}
\begin{proof}
Fix a countable dense subset $\{z_j : j \in \N\}$ of $\B^n$. For each $j \in \N$, define
\[
w_j = \theta (z_j).
\]
We consider two cases.

\noindent \textsf{Case I:} In the first case, we assume that $\theta$ is not a unimodular constant. Since $\theta \in \cls(\B^n)$, we know that
\[
\{w_j: j \in \N\} \subseteq \D.
\]
For each $m \geq 1$, consider the interpolation data $\clz_m$ and $\clw_m$ defined by
\[
\clz_m = \{z_j\}_{j=1}^m,
\]
and
\[
\clw_m =  \{w_j\}_{j=1}^m.
\]
Evidently, $\clz_m$ and $\clw_m$ solve the interpolation problem, since $\theta$ itself is an interpolant function. By Theorem \ref{thm: extr sol} (specifically, see the proof of Theorem \ref{thm: extr sol}), for each $m \in \N$, there exists an extremal function $\varphi_m \in \cle_{m+1}(\B^n) \subseteq \cle(\B^n)$ such that $\varphi_m$ interpolates $\clz_m$ and $\clw_m$. Given that
\[
\|\varphi_m\|_{\infty} = 1,
\]
for $m \in \N$, we conclude that $\{\varphi_m\}_m$ is a uniformly bounded sequence of analytic functions. Therefore, by Montel's theorem, there is a subsequence $\{\varphi_{m_k}\}$ and a function $\eta: \B^n \raro \mathbb{C}$ such that
\[
\lim_{k \raro \infty} \vp_{m_k} = \eta,
\]
uniformly on compact subsets of $\B^n$. As noted in the proof of Theorem \ref{thm: extr sol}, we have $\eta \in \cls(\B^n)$, and
\[
\lim_{k \raro \infty} \varphi_{m_{k}}(z) = \eta(z),
\]
for all $z \in \B^n$.

\noindent Fix $j \in \N$. Since $\{m_k\}_k$ is a strictly increasing sequence of naturals, there exists a $k_0 \in \N$ such that
\[
m_k \geq j,
\]
for all $k \geq k_0$. We know that the function $\varphi_{m_{k}}$ interpolates the data $\clz_{m_k}$ and $\clw_{m_k}$. For each $k \geq k_0$, in particular, since $1 \leq j \leq m_k$, it follows that
\[
\varphi_{m_{k}}(z_j) = w_j.
\]
Since $w_j = \theta(z_j)$, we have
\[
\varphi_{m_{k}}(z_j) = \theta(z_j),
\]
for all $k \geq k_0$. Thus, passing to the limit (noting that $k_0$ depends only on $j$) we obtain
\[
\lim_{k \raro \infty} \varphi_{m_{k}}(z_j) = \theta(z_j).
\]
Therefore, we conclude that
\[
\theta(z_j) = \eta(z_j),
\]
for all $j \in \N$. Since $\{z_j : j \in \N\}$ is dense in $\B^n$, by continuity of $\eta$ and $\theta$, it follows that
\[
\eta = \theta.
\]
\noindent \textsf{Case II:} For the second, and remaining, case, we assume that $\theta$ is a unimodular constant. There exists $w \in \T$ such that
\[
\theta(z) = w,
\]
for all $z \in \B^n$. Choose a sequence $\{w_k\}_k \subset \D$ such that
\[
\lim_{k \raro \infty} w_k = w.
\]
From here, we proceed as in Case I, but the details are still necessary. For each $m \geq 1$, we consider the data $\clz_m$ and $\clw_m$ as follows:
\[
\clz_m = \{z_1, \ldots , z_m\} \subset \B^n,
\]
and
\[
\clw_m = \{w_m, \ldots ,w_m\} \subset \D.
\]
Clearly, $\clz_m$ and $\clw_m$ solve the interpolation problem. By Theorem $\ref{thm: extr sol}$, there exists $\varphi_m \in \cle(\B^n)$ interpolating $\clz_m$ and $\clw_m$. As before, since the sequence
\[
\{\varphi_m\}_m,
\]
is uniformly bounded, there exists a subsequence $\{\varphi_{m_k}\}$ that converges uniformly on compact sets to some $\eta \in \cls(\B^n)$. Again, as before, fix $j \geq 1$. There exists $k_0 \in \mathbb{N}$ such that
\[
m_k \geq j,
\]
for all $k \geq k_0$. We observe that $1 \leq j \leq m_k$ for all $k \geq k_0$. Now, $\varphi_{m_k}$ interpolates the data $\clz_{m_k}$ and $\clw_{m_k}$, so
\[
\varphi_{m_k}(z_j) = w_{m_k}.
\]
As $\lim_{k \raro \infty} w_k = w$, it follows that
\[
\lim_{k} \varphi_{m_k}(z_j) = w = \theta(z_j).
\]
Since $\{\varphi_{m_k}\}$ converges to $\eta$ uniformly on compact sets, we also have
\[
\lim_{k} \varphi_{m_k}(z_j) = \eta(z_j).
\]
Thus $\eta(z_j) = \theta(z_j)$. As the set $\{z_j : j \in \N\}$ is dense in $\B^n$, and both $\theta$ and $\eta$ are continuous, we conclude that
\[
\theta = \eta.
\]
Given that, in either case, $\theta = \eta$, the first part of the theorem follows by relabeling the subsequence $\varphi_{m_{k}}$ as $\vp_k$ for all $k \in \mathbb{N}$. Finally, given the proof of the first part, the second part of the result is now easy and follows exactly as in part (b) of \cite[Theorem 5.3]{Rud inner}. This completes the proof of the theorem.
\end{proof}

In view of the identification of $\cle(\D)$ with the set of all finite Blaschke products in the classical $n = 1$ case, as pointed out in Theorem \ref{thm E(D) = finite BP}, the above result is clearly a unit ball version of the Carath\'{e}odory theorem. This can also be considered as an extremal function version of the Aleksandrov-Rudin theorem (see Theorem \ref{thm: intro Alek Rud} or \cite[Theorem 5.3]{Rud inner}).

\newsection{Characterizations by perturbations}\label{sec: pert}

We now turn to quotient modules of $H^2(\B^n)$ and present a criterion for commutant lifting. We call a closed subspace $\clq \subseteq H^2(\B^n)$ a \textit{quotient module} if $T_{z_i}^* \clq \subseteq \clq$ for all $i=1, \ldots, n$. A closed subspace $\cls \subseteq H^2(\B^n)$ is called a \textit{submodule} if $\cls^\perp$ is a quotient module of $H^2(\B^n)$. Equivalently, $\cls$ is a joint invariant subspace under coordinate functions:
\[
z_i \cls \subseteq \cls,
\]
for all $i=1, \ldots, n$. We state a well-known result: Let $\cls$ be a closed subspace of $H^2(\B^n)$. Then $\cls$ is a submodule if and only if
\begin{equation}\label{eqn: phi submod}
\vp \cls \subseteq \cls,
\end{equation}
for all $\vp \in H^\infty(\B^n)$.

Let $\clq$ be a quotient module of $H^2(\B^n)$. In the context of Problem \ref{prob: CLT}, observe that if $X := S_\vp (= P_\clq T_\vp|_\clq)$ for some $\vp \in \cls(\B^n)$, then
\[
\|X\| = \|S_\vp\| = \|P_\clq T_\vp|_\clq\| \leq \|\vp\|_\infty \leq 1,
\]
that is, $X \in \clb_1(\clq)$. Moreover, it is evident that $X S_{z_i} = S_{z_i} X$ for all $i=1, \ldots, n$. Thus, $S_\vp$ is a contractive module map on $\clq$. Therefore, the commutant lifting problem is concerned with establishing the converse: whether a contractive module map necessarily arises as a compressed multiplication operator by a Schur class function.

In this section, we present a solution to this problem that is technically less involved, yet it will play an important role in the relatively more involved results that follow. At the same time, the solution provided here reveals a perturbation property that is of independent interest. To avoid trivialities in the lifting problem, we always assume that all quotient modules $\clq$ and module maps $X \in \clb(\clq)$ under consideration are nontrivial:
\begin{equation}\label{eqn: X & Q nonzero}
\clq \neq \{0\} \text{ and } X \neq 0.
\end{equation}

We introduce some notation. Given $n$ commuting bounded linear operators $\{T_i\}_{i=1}^n$ acting on some Hilbert space, and a multi-index $k = (k_1, \ldots, k_n) \in \Z_+^n$, we write
\[
T^k = T_1^{k_1} \cdots T_n^{k_n}.
\]
Similarly, we write $z^k = z_1^{k_1} \cdots z_n^{k_n}$.

Let $\clq$ be a quotient module of $H^2(\B^n)$; that is, $\clq$ is a closed subspace of $H^2(\B^n)$ satisfying $z_i \clq^\perp \subseteq \clq^\perp$ for all $i=1, \ldots, n$. We give a special attention to the function $P_\clq 1 \in \clq$. This function is of particular interest, as it is a joint cyclic vector in the sense of quotient modules:
\begin{equation}\label{eqn: Q cyclic}
\clq = \overline{\text{span}} \{S_z^k (P_\clq 1): k \in \Z^n_+\}.
\end{equation}
To see this, for each $k \in \Z^n_+$, we compute
\[
S_z^k P_\clq 1 = P_\clq T_z^k|_{\clq} P_\clq 1 = P_\clq T_z^k P_\clq 1 = P_\clq T_z^k 1 =  P_\clq z^k.
\]
The claim now follows from the fact that
\[
H^2(\B^n) = \overline{\text{span}} \{z^k: k \in \Z^n_+\}.
\]
It is well known that the structure of submodules and quotient modules of $H^2(\B^n)$ is complicated, and unlike the case $n=1$ (which is covered by Beurling’s theorem and the classical Sz.-Nagy–Foiaș model theory \cite{NF}), no satisfactory characterization or representation is available. This complexity presents a major obstacle in understanding the several-variable commutant lifting theorem as well as the interpolation problem (and vice versa as well). Our first characterization of commutant lifting relies on perturbations of certain natural functions and establishes a connection with submodules of $H^2(\B^n)$.

We present the lifting theorem for a more general framework, as it will be used in subsequent results. Specifically, we consider the algebra $\cla$ for lifting maps, where
\[
\cla = H^\infty(\B^n) \text{ or } A(\B^n),
\]
where $A(\B^n)$ denotes the ball algebra. Recall that $A(\mathbb{B}^n)$ is the Banach algebra consisting of all analytic functions on $\mathbb{B}^n$ that admit a continuous extension to $\overline{\B^n}$.

Note that the statement that a contractive module map $X$ acting on a quotient module admits a lift means, as usual, the existence of a Schur function $\vp \in \cls(\B^n)$ such that $X = S_\vp$. If $\vp$ lies merely in $\cla$ (and is not necessarily a Schur function), then we say that $X$ admits a lift to $\cla$ (in this case, $X$ is not necessarily contractive).

Let $X \in \clb(\clq)$ be a module map (recall that module maps are always assumed to be nonzero). Define
\[
\psi_X = X(P_\clq 1).
\]
In what follows, we will suppress the subscript and simply write $\psi$ (as $X$ will be clear from the context). This function
\[
\psi =  X(P_\clq 1) \in \clq,
\]
will play a central role throughout the developments that follow. Before we proceed, we point out that $\psi$ is never the zero function:
\[
\psi \neq 0.
\]
Indeed, assume that for a nonzero quotient module $\clq$ (as we always assume, see \eqref{eqn: X & Q nonzero}) of $H^2(\B^n)$, and for a module map $X \in \clb(\clq)$, suppose for contradiction that
\[
\psi = X (P_{\clq} 1) = 0.
\]
Since $X$ is a module map on $\clq$, it follows that
\[
X S_z^k = S_z^k X,
\]
and hence
\[
X (S_z^k P_{\clq} 1) = 0,
\]
$k \in \Z_+^n$. By \eqref{eqn: Q cyclic}
\[
\clq = \overline{\text{span}} \{S_z^k P_{\clq} 1: k \in \Z_+^n\},
\]
which immediately implies that $X = 0$, contradicting our assumption that $X \neq 0$ (again, see \eqref{eqn: X & Q nonzero}). Therefore, $\psi \neq 0$. This observation will also be important in the interpolation problem in Section \ref{sec: interpolation}. For now, we prove a lifting characterization in terms of perturbation.

\begin{theorem}\label{lemma: lift ball 1}
Let $\clq$ be a quotient module of $H^2(\B^n)$, and let $X \in \clb(\clq)$ be a module map. Then $X$ admits a lift to $\cla$ if and only if there exists $\vp \in \cla$ such that
\[
\psi - \vp \in \clq^{\perp}.
\]
Moreover, in this case $X = S_{\vp}$.
\end{theorem}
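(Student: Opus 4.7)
The plan is to establish both directions using the single key calculation $S_\vp(P_\clq 1) = P_\clq \vp$, which identifies $\psi = X(P_\clq 1)$ with $P_\clq \vp$ whenever $X = S_\vp$. I would exploit the cyclic vector property (\ref{eqn: Q cyclic}), namely $\clq = \overline{\operatorname{span}}\{S_z^k(P_\clq 1) : k \in \Z_+^n\}$, which reduces any equality of module maps on $\clq$ to a single equality at $P_\clq 1$.

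For the forward direction, suppose $X = S_\vp$ with $\vp \in \cla$. I would write $1 = P_\clq 1 + P_{\clq^\perp} 1$ and compute
\[
\psi = X(P_\clq 1) = P_\clq T_\vp(P_\clq 1) = P_\clq T_\vp 1 - P_\clq T_\vp(P_{\clq^\perp} 1).
\]
Since $\clq^\perp$ is a submodule and $\vp \in \cla \subseteq H^\infty(\B^n)$, the subspace $\clq^\perp$ is $T_\vp$-invariant, so the second term vanishes under $P_\clq$. Hence $\psi = P_\clq \vp$, which is precisely the statement that $\psi - \vp \in \clq^\perp$.

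For the converse, assume $\vp \in \cla$ with $\psi - \vp \in \clq^\perp$. Note first that $S_\vp$ is itself a module map: the commutation $T_\vp T_{z_i} = T_{z_i} T_\vp$ on $H^2(\B^n)$, combined with the $T_\vp$-invariance of $\clq^\perp$, yields $S_\vp S_{z_i} = P_\clq T_{z_i} T_\vp = S_{z_i} S_\vp$ after a short computation. By the forward calculation applied to $S_\vp$ in place of $X$, we obtain $S_\vp(P_\clq 1) = P_\clq \vp = \psi = X(P_\clq 1)$, using the hypothesis $\psi - \vp \in \clq^\perp$ together with $\psi \in \clq$. Since both $X$ and $S_\vp$ commute with each $S_{z_i}$, they agree on every vector of the form $S_z^k (P_\clq 1)$, and by (\ref{eqn: Q cyclic}) and boundedness they agree on all of $\clq$. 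Thus $X = S_\vp$, which is a lift of $X$ to $\cla$.

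There is no substantive obstacle here; the argument is essentially bookkeeping around the cyclic vector $P_\clq 1$ and the submodule invariance of $\clq^\perp$ under $T_\vp$. The only point that deserves care is the mild verification that $S_\vp$ is a module map for every $\vp \in \cla$ (not only for Schur symbols), since the theorem is formulated for general $X \in \clb(\clq)$ without the contractivity hypothesis; once this is in place, the cyclic vector property does all the work.
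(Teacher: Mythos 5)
Your proof is correct and follows essentially the same route as the paper: the forward direction hinges on $P_\clq T_\vp P_\clq = P_\clq T_\vp$ (i.e., $T_\vp$-invariance of the submodule $\clq^\perp$), and the converse shows that $X$ and $S_\vp$ are module maps agreeing at the cyclic vector $P_\clq 1$, then invokes \eqref{eqn: Q cyclic}. The one point you spell out more carefully than the paper — that $S_\vp$ is itself a module map for general $\vp \in \cla$ — is implicitly used there as well, so this is a matter of exposition rather than a genuine departure.
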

\begin{proof}
Suppose $X$ admits a lift. That is, there exists $\vp \in \cla$ such that
\[
X = S_{\vp}.
\]
Then
\[
\psi = X (P_{\clq} 1) = S_{\vp} (P_{\clq} 1) = P_{\clq} T_{\vp} P_{\clq} 1.
\]
Since $\clq ^\perp$ is a submodule, it follows that (see \eqref{eqn: phi submod})
\[
\vp \clq^{\perp} \subseteq \clq^{\perp},
\]
and hence $P_{\clq} T_{\vp} P_{\clq} = P_{\clq} T_{\vp}$. This implies
\[
\psi = P_{\clq} \vp,
\]
so we conclude that
\[
\psi- \vp = P_{\clq} \vp - \vp = - P_{\clq ^\perp} \vp.
\]
In other words, we have
\[
\psi- \vp  \in \clq^\perp.
\]
Conversely, assume that $\psi - \vp \in \clq^{\perp}$. Therefore, $P_\clq(\psi- \vp) = 0$. Since $\psi \in \clq$, we must have $P_\clq \psi = \psi$, and hence
\[
\psi = P_\clq \vp.
\]
Pick $k \in \Z_+^n$. As $X$ is a module map, $XS_{z_i} = S_{z_i} X$ for all $i=1, \ldots, n$, and hence
\[
X S_z^k = S_z^k X.
\]
Therefore
\[
X S_z^k P_{\clq} 1 = S_{z}^k X P_{\clq} 1 = S_z^k \psi.
\]
Moreover, since $\vp \in H^\infty(\B^n)$ is analytic, it follows that $T_\vp$ is an analytic Toeplitz operator. As a result,
\[
T_z^k T_\vp = T_\vp T_z^k,
\]
which implies $S_z^k S_\vp = S_\vp S_z^k$ for all $k \in \Z_+^n$. Therefore, for all $k \in \Z_+^n$, we have
\[
\begin{split}
X (S_z^k P_{\clq} 1) & = S_z^k \psi
\\
& = S_z^k P_\clq \vp
\\
& = S_z^k S_\vp P_\clq 1
\\
& = S_\vp (S_z^k P_\clq 1).
\end{split}
\]
In view of \eqref{eqn: Q cyclic}, it follows that $X = S_\vp$, which completes the proof of the theorem.
\end{proof}

We now address the commutant lifting problem in its common form. Note that no specific algebraic property of the algebra $\cla$ has been used in the proof of the above theorem. The following result may be proved in exactly the same manner as Theorem \ref{lemma: lift ball 1}; one simply replaces $\cla$ by $\cls(\B^n)$ throughout the argument:

\begin{corollary}\label{lift lemma}
Let $\clq$ be a quotient module of $H^2(\B^n)$, and let $X \in \clb(\clq)$ be a module map. Then $X$ admits a lift if and only if there exists $\vp \in \cls(\B^n)$ such that
\[
\psi - \vp \in \clq^{\perp}.
\]
Moreover, in this case $X = S_{\vp}$.
\end{corollary}

We now specialize to Schur functions in the ball algebra $A(\mathbb{B}^n)$. Define
\[
\mathcal{SB}(\mathbb{B}^n) = \mathcal{S}(\mathbb{B}^n) \cap A(\mathbb{B}^n).
\]
In the setting of the above theorem, if, in addition, the lifting symbol $\vp$ is also in $\mathcal{SB}(\mathbb{B}^n)$, then we say that $X$ admits a lift to $\mathcal{SB}(\mathbb{B}^n)$. Again, the following holds with a proof similar to that of Theorem \ref{lemma: lift ball 1}:

\begin{corollary}\label{lemma: lift ball algbera}
Let $\clq$ be a quotient module of $H^2(\B^n)$, and let $X \in \clb(\clq)$ be a module map. Then $X$ admits a lift to $\mathcal{SB}(\mathbb{B}^n)$ if and only if there exists $\vp \in \mathcal{SB}(\mathbb{B}^n)$ such that
\[
\psi - \vp \in \clq^{\perp}.
\]
Moreover, in this case $X$ lifts to $S_{\vp}$.
\end{corollary}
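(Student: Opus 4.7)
The plan is to reproduce the proof of Theorem \ref{lemma: lift ball 1} almost verbatim, simply noting that $\mathcal{SB}(\B^n) \subseteq H^\infty(\B^n)$, so that every algebraic step that used membership in $\cla$ remains valid when $\vp$ is required to lie in $\mathcal{SB}(\B^n) = \cls(\B^n) \cap A(\B^n)$. No new ingredient is needed; the refinement is purely a bookkeeping check that both directions preserve the additional regularity/norm constraint.

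For the forward direction, I would assume $X$ lifts to $\mathcal{SB}(\B^n)$, so there exists $\vp \in \mathcal{SB}(\B^n)$ with $X = S_\vp$. Because $\vp \in H^\infty(\B^n)$, its Toeplitz operator $T_\vp$ leaves the submodule $\clq^\perp$ invariant, hence $P_\clq T_\vp P_\clq = P_\clq T_\vp$. Evaluating at $1$ gives $\psi = XP_\clq 1 = P_\clq T_\vp P_\clq 1 = P_\clq \vp$, so $\psi - \vp = -P_{\clq^\perp} \vp \in \clq^\perp$, exactly as in Theorem \ref{lemma: lift ball 1}.

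For the converse, I would assume $\vp \in \mathcal{SB}(\B^n)$ and $\psi - \vp \in \clq^\perp$. Projecting onto $\clq$ gives $\psi = P_\clq \vp$. Since $X$ is a module map, $XS_z^k P_\clq 1 = S_z^k XP_\clq 1 = S_z^k \psi = S_z^k P_\clq \vp = S_z^k \vp = S_\vp S_z^k P_\clq 1$ for every $k \in \Z_+^n$, where the last equality uses that $T_\vp$ is an analytic Toeplitz operator commuting with $T_z^k$, passed down to the quotient. By the cyclicity of $P_\clq 1$ recorded in \eqref{eqn: Q cyclic}, this forces $X = S_\vp$. Since $\vp \in \mathcal{SB}(\B^n)$ by hypothesis, $X$ admits a lift to $\mathcal{SB}(\B^n)$. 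The contractivity of $X$ is automatic (indeed consistent with the standing hypothesis $X \in \clb_1(\clq)$) because $\|S_\vp\| \le \|\vp\|_\infty \le 1$.

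There is no real obstacle here: the only thing to verify is that the extra condition $\vp \in A(\B^n)$ is simply carried along in both implications, which it is. For this reason, I would present the corollary as an immediate consequence of Theorem \ref{lemma: lift ball 1} together with Theorem \ref{lift lemma}, noting explicitly that if one runs the argument of Theorem \ref{lemma: lift ball 1} with the choice $\cla \cap \cls(\B^n) = \mathcal{SB}(\B^n)$ in place of $\cla$, every step remains valid without modification.
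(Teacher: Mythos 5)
Your proposal is correct and coincides with the paper's intended argument: the paper explicitly says the corollary "holds with a proof similar to that of Theorem \ref{lemma: lift ball 1}," and you carry out precisely that rerun, observing that the extra constraint $\vp \in \mathcal{SB}(\B^n) = \cls(\B^n) \cap A(\B^n)$ passes through both directions unchanged.
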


As already noted for $n=1$, every contractive module map admits a lifting. However, as we will see (cf. Section \ref{Sec: examples}), the situation changes significantly in higher dimensions: unlike the one-variable case, where every contractive module map lifts by Sarason’s theorem \cite{D Sarason}, there exist contractive module maps that may or may not admit liftings. Thus, the classification of liftings in several variables becomes a nontrivial and meaningful problem.

Before concluding this section, we note that the perturbation problem above is, perhaps coincidentally, connected with another perturbation phenomenon for inner functions studied by Rudin \cite{Rud inner}, a connection that we will discuss further in Section \ref{Sec: examples}.

\section{Qualitative characterizations}\label{sec: quality char}

In this section, we go deeper into the structure of $H^2(\B^n)$. All notations and preliminaries presented at the beginning of this section are standard for $H^2(\B^n)$. For further details, we refer the reader to the classic \cite{Rud} (also see \cite{Kor}).

Let $\zeta \in \Sn$ and let $\alpha >1$. In analogy with the notion of an angular region in one variable, define $\cld_{\alpha}(\zeta)$ by (see \cite[5.4.1]{Rud})
\[
\cld_{\alpha}(\zeta) = \Big\{z \in \B^n : |1 - \la z, \zeta \ra| < \frac{\alpha}{2} (1 - \|z\|^2)\Big\}.
\]
A function $f: \B^n \raro \C$  is said to have $K-$limit $c$ at $\zeta \in \Sn$ if, for every $\alpha > 1$,
\[
\lim_{\substack{z \raro \zeta\\ z \in \cld_{\alpha}(\zeta)}} f(z) = c.
\]
If this limit exists, we write (see \cite[5.4.6]{Rud})
\[
(K-\lim f)(\zeta) = c.
\]
The notion of $K$-limits gives boundary values of the Hardy functions as follows (see \cite[Theorem 5.6.6]{Rud}): For each $f \in H^2(\B^n)$, the function $f^*$ is defined almost everywhere on $\Sn$, where
\begin{equation}\label{eqn: f radial}
f^*(\zeta) = (K-\lim f)(\zeta),
\end{equation}
for $\zeta \in \Sn$ a.e. Define
\[
H^2(\Sn) = \overline{\{f|_{\Sn}: f \in A(\B^n)\}}^{L^2(\Sn)},
\]
and
\[
H^\infty(\Sn) = H^2(\Sn) \cap L^\infty(\Sn).
\]
By the corollary preceding Theorem 5.6.9 of \cite{Rud}, the assignment
\[
U f = f^*,
\]
for all $f \in H^2(\B^n)$, defines a unitary operator $U: H^2(\B^n) \rightarrow H^2(\Sn)$. In view of this, we will often identify $H^2(\B^n)$ with $H^2(\Sn)$ via the unitary $U$. Note that the set of polynomials
\[
\{p^*: p \in \C[z_1, \dots z_n]\},
\]
is dense in $H^2(\Sn)$. We also remark that if $K-\lim f$ exists for a function $f$, then
\[
K-\lim \bar{f} = \overline{K-\lim f}.
\]
The key tool in the above identification is the use of Poisson integral techniques, which rely on the Poisson kernel. The Poisson kernel on the unit ball is defined by \cite[Definition 3.3.1]{Rud}
\[
P(z, \zeta) = \frac{(1 - \|z\|^2)^n}{|1 - \langle z, \zeta \rangle|^{2n}},
\]
for all $z \in \B^n$ and $\zeta \in \Sn$. The Poisson integral $\clp[\vp]$ of a function $\vp \in L^1(\Sn)$ is defined by \cite[page 41]{Rud}
\[
\clp[\vp](z) = \int_{\Sn} P(z, \zeta) \vp(\zeta) d\sigma(\zeta),
\]
for all $z \in \B^n$. We claim that
\begin{equation}\label{eqn: Poisson P f}
f = \clp[f^*],
\end{equation}
for all $f \in H^2(\B^n)$. Indeed, since $f^* \in H^2(\Sn)$, \cite[Theorem 5.6.8(b)]{Rud} implies
\[
\clp[f^*] \in H^2(\B^n).
\]
Then
\[
g := \clp[f^*] \in H^2(\B^n),
\]
and (again, see \cite[Theorem 5.6.8(b)]{Rud})
\[
g^* = (\clp[f^*])^* = f^*,
\]
a.e. on $\Sn$. Thus, $g^*, f^* \in H^2(\Sn)$ and $g^* = f^*$. Since $Uh = h^*$ (recall the unitary $U: H^2(\B^n) \raro H^2(\Sn)$ above), it follows $g = f$, that is, $f = \clp[f^*]$. This completes the proof of the claim. We also have the following:
\[
\lim_{r\raro 1} \int_{\Sn} |f^* - f_r|^2 d\sigma = 0,
\]
where, $f_r$, $0< r < 1$, is defined by
\[
f_r(\zeta) = f(r \zeta),
\]
for $\zeta \in \Sn$ a.e. Note that $r \zeta = (r \zeta_1, \ldots, r \zeta_n)$. The above claim follows from \cite[Theorem 5.6.6]{Rud}.

Now we set up the notations needed for the lifting theorem. Although some of these have already appeared in the introduction, we present them again here with the necessary elaboration. If $S$ is a subset of $H^2(\Sn)$, then we define $S^{conj}$ by
\[
S^{conj} = \{\overline{f} : f \in S\}.
\]
The space of mixed functions is defined by
\[
\clm(\Sn) = L^2(\Sn) \ominus [H^2(\Sn) + H^2(\Sn)^{conj}].
\]
From the definition of $\clm(\Sn)$ it is clear that $\clm(\Sn)$ is self-adjoint, that is,
\[
\clm(\Sn) = \clm(\Sn)^{conj}.
\]
Moreover, we note that if $\cls$ is a closed subspace of $L^2(\Sn)$ then $\cls^{conj}$ is also a closed subspace of $L^2(\Sn)$. Recall that
\[
H^2_0(\Sn) = \{f^* \in H^2(\Sn): f \in H^2(\B^n), f(0) = 0\},
\]
equivalently
\[
H^2_0(\Sn) = \{f^* \in H^2(\Sn) : \langle f, 1 \rangle_{H^2(\B^n)} = 0\} =  H^2(\Sn) \ominus \{1\}.
\]
Fix a quotient module $\clq$ of $H^2(\Sn)$ and define
\[
\clm_{\clq} = \clq^{conj} \dot+ [\clm(\Sn) \dot+ H^2_0(\Sn)].
\]
where $\dot+$ denotes the skew sum in the Banach space $L^1(\Sn)$. Note that here we are using the identification of $H^2(\B^n)$ with $H^2(\Sn)$ via the $K$-limits of the Hardy functions (via the unitary operator $U$ defined as above). In other words, we consider $\clm_{\clq}$ as a subspace of $L^1(\Sn)$:
\[
(\clm_{\clq}, \|\cdot\|_1) \subset L^1(\Sn).
\]
Now if we consider $\clm_\clq$ to be a subspace of the Hilbert space $L^2(\Sn)$, then the skew sum mentioned above becomes an orthogonal sum. We keep this observation as a lemma for future reference:

\begin{lemma}\label{H^2 conj orth}
Let $\clq$ be a quotient module of $H^2(\B^n)$. Then $\clm(\Sn)$, $H^2_0(\Sn)$, and $\clq^{conj}$ are pairwise orthogonal closed subspaces of $L^2(\Sn)$. Moreover,
\[
H^2_0(\Sn) \perp H^2(\Sn)^{conj}.
\]
\end{lemma}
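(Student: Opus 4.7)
The plan is to reduce the four orthogonalities to two ingredients: the defining property of $\clm(\Sn)$, and the bilinear identity
\[
\int_{\Sn} f g \, d\sigma \;=\; f(0)\, g(0) \qquad (f, g \in H^2(\Sn)).
\]
Once this is available, every pairing in the lemma becomes immediate.

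First, the pairs involving $\clm(\Sn)$ come for free. By construction $\clm(\Sn) \perp H^2(\Sn) + H^2(\Sn)^{conj}$, and since $H^2_0(\Sn) \subseteq H^2(\Sn)$, while $\clq \subseteq H^2(\Sn)$ gives $\clq^{conj} \subseteq H^2(\Sn)^{conj}$, we obtain $\clm(\Sn) \perp H^2_0(\Sn)$ and $\clm(\Sn) \perp \clq^{conj}$ with no work. The same inclusion $\clq^{conj} \subseteq H^2(\Sn)^{conj}$ shows that the orthogonality $H^2_0(\Sn) \perp \clq^{conj}$ is a consequence of the ``moreover'' clause $H^2_0(\Sn) \perp H^2(\Sn)^{conj}$. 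Finally, this last assertion reduces, via $\la f, \overline{g}\ra_{L^2(\Sn)} = \int_{\Sn} fg\, d\sigma$, to the displayed identity combined with the vanishing $f(0) = 0$.

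It thus remains to justify the identity, which I would prove by polynomial approximation. For polynomials $p, q \in \C[z_1, \ldots, z_n]$, the product $pq$ is again a polynomial, so applying the reproducing property of the Szeg\"o kernel at the origin (noting $\Sk(\cdot, 0) \equiv 1$) yields
\[
\int_{\Sn} pq \, d\sigma \;=\; (pq)(0) \;=\; p(0)\, q(0).
\]
For arbitrary $f, g \in H^2(\Sn)$, choose polynomial sequences $p_k \to f$ and $q_k \to g$ in the $L^2(\Sn)$-norm; the Cauchy--Schwarz estimate
\[
\Big| \int_{\Sn} fg\, d\sigma - \int_{\Sn} p_k q_k\, d\sigma \Big| \leq \|f - p_k\|_2\, \|g\|_2 + \|p_k\|_2\, \|g - q_k\|_2 \longrightarrow 0,
\]
together with continuity of evaluation at $0$ on $H^2(\Sn)$ (again the reproducing property with $w = 0$), lets one pass to the limit. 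I anticipate no serious obstacle; the argument is essentially bookkeeping around the single fact that $\Sk(\cdot, 0) \equiv 1$.
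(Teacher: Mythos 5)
Your proof is correct. The paper states this lemma without proof, regarding it as an immediate observation; your argument — reducing every pairing to $H^2_0(\Sn) \perp H^2(\Sn)^{conj}$ via the defining property of $\clm(\Sn)$ and the inclusions $H^2_0(\Sn) \subseteq H^2(\Sn)$, $\clq^{conj} \subseteq H^2(\Sn)^{conj}$, and then verifying the latter orthogonality through the identity $\int_{\Sn} fg\, d\sigma = f(0)g(0)$ (reproducing property at the origin, $\Sk(\cdot,0)\equiv 1$, established first for polynomials and extended by the Cauchy--Schwarz/density bookkeeping you describe) — is exactly the standard reasoning the authors leave implicit, and the closedness of the three subspaces is likewise routine (orthogonal complement, kernel of $\langle\cdot,1\rangle$, and the conjugate of a closed subspace, respectively).
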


\begin{remark}\label{remark: H^2+H^2 conj closed}
Using the identification of $H^2(\B^n)$ and $H^2(\Sn)$, we have
\[
H^2(\Sn) + H^2(\Sn)^{conj} = H^2_0(\Sn) \oplus H^2(\Sn)^{conj},
\]
which readily implies that $H^2(\Sn) + H^2(\Sn)^{conj}$ is a closed subspace of $L^2(\Sn)$.
\end{remark}

We also recall the duality of the $L^p$-spaces in our present context. For each $\vp \in L^\infty(\Sn)$, define $\chi_\vp \in (L^1(\Sn))^*$ by
\[
\chi_\vp f = \int_{\Sn} f \vp \,d\sigma,
\]
for all $f \in L^1(\Sn)$. This yields the duality (a linear isometry and bijective map)
\begin{equation}\label{eqn: dual}
(L^1(\Sn))^* \cong L^\infty(\Sn).
\end{equation}
This duality will play a key role in what follows, and in particular, in the quantitative lifting theorem:

\begin{theorem}\label{contr lift}
Let $\clq \subseteq H^2(\B^n)$ be a quotient module and let $X \in \clb_1(\clq)$ be a module map. Define $X_{\clq} : \clm_{\clq} \raro \C$ by
\[
X_{\clq} f = \int_{\Sn} \psi f d\sigma,
\]
for all $f \in \clm_{\clq}$, where
\[
\psi = X (P_{\clq} 1).
\]
Then $X$ admits a lift if and only if $X_{\clq} : (\clm_{\clq}, \|.\|_{1}) \raro \C$ is a contraction.
\end{theorem}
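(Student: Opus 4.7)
The plan is to use Theorem \ref{lift lemma} as the pivot: it reduces the existence of a lift to the existence of $\vp \in \cls(\B^n)$ with $\psi - \vp \in \clq^\perp$, so the task becomes converting between this perturbation condition and the contractivity of $X_\clq$.

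For the forward direction, I would assume $X$ lifts to $S_\vp$ with $\vp \in \cls(\B^n)$, so $\psi - \vp \in \clq^\perp \subseteq H^2(\B^n)$. Given $f \in \clm_\clq$, I would decompose $f = f_1 + f_2 + f_3$ with $f_1 \in \clq^{conj}$, $f_2 \in \clm(\Sn)$, $f_3 \in H^2_0(\Sn)$, and verify that $\int_\Sn (\psi - \vp) f_i \, d\sigma = 0$ for each piece. The first integral vanishes because $\overline{f_1} \in \clq$ is $L^2$-orthogonal to $\psi - \vp \in \clq^\perp$. The second vanishes because $\psi - \vp \in H^2(\Sn)$ while $\overline{f_2} \in \clm(\Sn) \perp H^2(\Sn)$ by the very definition of $\clm(\Sn)$. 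The third vanishes by Lemma \ref{H^2 conj orth}, since $\psi - \vp \in H^2(\Sn)$ and $\overline{f_3} \in H^2_0(\Sn)^{conj}$. Consequently $X_\clq f = \int \psi f \, d\sigma = \int \vp f \, d\sigma$, and $|X_\clq f| \le \|\vp\|_\infty \|f\|_1 \le \|f\|_1$ closes this direction.

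For the converse I would extend $X_\clq$ from $(\clm_\clq, \|\cdot\|_1) \subseteq L^1(\Sn)$ to a contractive linear functional on all of $L^1(\Sn)$ by the Hahn-Banach theorem, and then invoke the duality \eqref{eqn: dual} to represent the extension as $\chi_\vp$ for some $\vp \in L^\infty(\Sn)$ with $\|\vp\|_\infty \le 1$. The agreement $\chi_\vp|_{\clm_\clq} = X_\clq$ translates to $\int_\Sn (\vp - \psi) f \, d\sigma = 0$ for every $f \in \clm_\clq$; equivalently, $\vp - \psi \in L^2(\Sn)$ is orthogonal to $\clm_\clq^{conj}$.

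The main obstacle will be verifying that the $L^\infty$ function $\vp$ furnished by duality is actually the boundary function of an element of $H^\infty(\B^n)$. For this I would leverage Lemma \ref{H^2 conj orth} to record the $L^2$-orthogonal decomposition $L^2(\Sn) = \clq \oplus \cls \oplus H^2_0(\Sn)^{conj} \oplus \clm(\Sn)$, where $\cls := H^2(\Sn) \ominus \clq$. Conjugating the definition of $\clm_\clq$ and using self-adjointness of $\clm(\Sn)$ gives $\clm_\clq^{conj} = \clq \oplus \clm(\Sn) \oplus H^2_0(\Sn)^{conj}$, whose $L^2$-complement in the above decomposition is exactly $\cls = \clq^\perp$. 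Hence $\vp - \psi \in \clq^\perp \subseteq H^2(\Sn)$; combined with $\psi \in H^2(\Sn)$ this forces $\vp \in H^2(\Sn) \cap L^\infty(\Sn) = H^\infty(\Sn)$, identified with $H^\infty(\B^n)$. Together with $\|\vp\|_\infty \le 1$ this places $\vp$ in $\cls(\B^n)$, and the identity $\psi - \vp \in \clq^\perp$ permits a direct appeal to Theorem \ref{lift lemma} to conclude that $X$ lifts to $S_\vp$.
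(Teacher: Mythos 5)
Your proof is correct and follows the same high-level strategy as the paper (reduce both directions to Theorem \ref{lift lemma}, use Hahn--Banach plus $L^1$--$L^\infty$ duality in the converse), but the converse direction is organized more efficiently. After Hahn--Banach furnishes $\vp \in L^\infty(\Sn)$ with $\chi_\vp|_{\clm_\clq} = X_\clq$, the paper proceeds in several stages: it first uses the $\clm(\Sn)$-piece of $\clm_\clq$ to place $\theta$ in $H^2(\Sn) + H^2(\Sn)^{conj}$, then decomposes $\theta = \vp_1^* + \vp_2^*$ with $\vp_1^*$ antiholomorphic, then uses the $H^2_0(\Sn)$-piece together with an inner-product computation to force $\vp_1^* = 0$, then invokes the Poisson integral to bound $\vp_2$, and only at the end uses the $\clq^{conj}$-piece to obtain $\vp_2 - \psi \in \clq^\perp$. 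You instead observe in one stroke that, under the $L^2$-orthogonal decomposition $L^2(\Sn) = \clq \oplus (H^2(\Sn)\ominus\clq) \oplus H^2_0(\Sn)^{conj} \oplus \clm(\Sn)$ provided by Lemma \ref{H^2 conj orth} and Remark \ref{remark: H^2+H^2 conj closed}, the subspace $\clm_\clq^{conj} = \clq \oplus \clm(\Sn) \oplus H^2_0(\Sn)^{conj}$ has orthogonal complement exactly $H^2(\Sn)\ominus\clq = \clq^\perp$; hence $\vp - \psi \perp \clm_\clq^{conj}$ immediately yields $\vp - \psi \in \clq^\perp \subseteq H^2(\Sn)$, collapsing the paper's three separate eliminations into a single decomposition argument. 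What the paper's longer route buys is an explicit, self-contained verification (via the Poisson integral) that an element of $H^2(\Sn) \cap L^\infty(\Sn)$ corresponds to a genuine $H^\infty(\B^n)$ function with the same sup-norm bound; you invoke this identification as a standard fact. That fact is indeed standard (it is essentially the content of the paper's Poisson-integral paragraph), so no gap is created, but if you want the argument to be as self-contained as the paper's you should cite or reproduce that step rather than absorbing it into the equality $H^2(\Sn) \cap L^\infty(\Sn) = H^\infty(\Sn)$.
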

\begin{proof}
First, we assume that $X \in \clb_1(\clq)$ admits a lift. That is, there exists $\vp \in \cls(\B^n)$ such that $X = S_\vp$. By Corollary \ref{lift lemma}, we know that
\[
\psi - \vp \in \clq^\perp.
\]
Since $\vp$ is a Schur function, by taking $K$-limit, we obtain
\[
\|\vp\|_{H^{\infty}(\Sn)} \leq 1.
\]
By the duality \eqref{eqn: dual}, we know that $\chi_{\vp} \in (L^1(\Sn))^*$, where
\[
\chi_{\vp} f = \int_{\Sn} \vp f \, d\sigma,
\]
for all $f \in L^1(\Sn)$. We also know (by the duality) that
\[
\|\chi_{\vp}\| = \|\vp\|_{H^{\infty}(\Sn)} \leq 1.
\]
We claim that
\begin{equation}\label{eqn: chi = vp}
\chi_\vp|_{\clm_\clq} = X_\clq.
\end{equation}
To prove this, first we pick $f \in \clq^{conj}$. We have
\[
\begin{split}
X_{\clq} f & = \int_{\Sn} \psi f d\sigma
\\
& = \int_{\Sn} \psi \overline{\overline{f}} d\sigma
\\
& = \langle \psi, \overline{f}\rangle_{H^2(\Sn)}
\\
& = \langle \psi, \overline{f} \rangle_{H^2(\B^n)}.
\end{split}
\]
Since $\psi - \vp \in \clq^{\perp}$, it follows that $\langle \psi - \vp, \overline{f}\rangle_{H^2(\B^n)} = 0$, and hence
\[
\langle\psi, \overline{f}\rangle_{H^2(\B^n)} = \langle\vp, \overline{f}\rangle_{H^2(\B^n)}.
\]
This implies
\[
\begin{split}
X_{\clq} f & = \langle\vp, \overline{f}\rangle_{H^2(\B^n)}
\\
& = \langle \vp, \overline{f}\rangle_{H^2(\Sn)}
\\
& = \int_{\Sn} \vp f d\sigma
\\
& = \chi_{\vp} f,
\end{split}
\]
that is,
\[
X_{\clq}|_{\clq^{conj}} = \chi_\vp|_{\clq^{conj}}.
\]
For $f \in \clm(\Sn)$, we have
\[
X_{\clq} f = \int_{\Sn} \psi f d\sigma = \langle f, \overline{\psi}\rangle_{L^2(\Sn)} = 0 = \chi_{\vp} f.
\]
This shows that
\[
X_{\clq}|_{\clm(\Sn)} = \chi_\vp|_{\clm(\Sn)}.
\]
Finally, if $f \in H^2_0(\Sn)$, then, as $\clq^{conj} \perp H^2_0(\Sn)$, we have
\[
X_{\clq} f = \int_{\Sn} \psi f d\sigma = 0,
\]
and on the other hand, as $\vp \in H^2(\B^n)$ and $f \in H^2_0(\Sn)$, we have
\[
\chi_{\vp} f = \int_{\Sn} \vp f d\sigma = 0 = X_{\clq} f.
\]
Therefore,
\[
X_{\clq}|_{H^2_0(\Sn)} = \chi_\vp|_{H^2_0(\Sn)} = 0,
\]
and proves the claim that $X_{\clq} = \chi_{\vp}|_{\clm_{\clq}}$. By using this, we have
\[
\|X_{\clq}\| \leq \|\chi_{\vp}\| = \|\vp\|_{\infty} \leq 1,
\]
which proves that $X_{\clq}$ is a contractive functional on $\clm_{\clq}$.

\noindent For the converse direction, assume that $X_{\clq} : (\clm_{\clq}, \|.\|_1) \longrightarrow \mathbb{C}$ is a contractive functional. By the Hahn-Banach extension theorem, there exists $\theta \in L^{\infty}(\Sn)$ such that
\[
\chi_\theta|_{\clm_\clq} = X_\clq,
\]
and
\[
\|\theta\|_\infty = \|X_\clq\| \leq 1.
\]
For $f \in \clq^{conj}$, as $\chi_{\theta} f = X_{\clq} f $, we have
\[
\int_{\Sn} \theta f d\sigma = \int_{\Sn} \psi f d\sigma,
\]
and hence
\be \label{conj perp}
\int_{\Sn} (\theta - \psi)f d\sigma = 0.
\ee
We claim that $\theta \in H^2(\Sn)$. To this end, we first note that $\theta \in L^2(\Sn)$. Let $f \in \clm(\Sn)$. As $\clm(\Sn)$ is self-adjoint, we have $\overline{f} \in \clm(\Sn)$, and hence
\[
\begin{split}
\langle \theta, f \rangle_{L^2(\Sn)} & = \int_{\Sn} \theta \overline{f} d\sigma
\\
& = \chi_{\theta} \overline{f}
\\
& = X_{\clq} \overline{f}
\\
& = \int_{\Sn} \psi \overline{f} \,d\sigma
\\
& = \langle \psi, f \rangle_{L^2(\Sn)}
\\
& = 0.
\end{split}
\]
This implies
\[
\theta \in L^2(\Sn) \ominus \clm(\Sn).
\]
In view of Remark $\ref{remark: H^2+H^2 conj closed}$, we know that the subspace $H^2(\Sn) + H^2(\Sn)^{conj}$ is closed in $L^2(\Sn)$, which yields
\[
\begin{split}
L^2(\Sn) \ominus \clm(\Sn) & = L^2(\Sn) \ominus \Big(L^2(\Sn) \ominus [H^2(\Sn) + H^2(\Sn)^{conj}]\Big)
\\
& = H^2(\Sn) + H^2(\Sn)^{conj},
\end{split}
\]
and consequently
\[
\theta \in  H^2(\Sn) + H^2(\Sn)^{conj}.
\]
There exists $\eta_1 \in H^2(\B^n)^{conj}$ and $\eta_2 \in H^2(\B^n)$ such that (recall that $\eta_j^*$ denotes the $K$-limit function of $\eta_j$, $j=1, 2$)
\[
\theta = {\eta_1}^* + {\eta_2}^*.
\]
Define
\[
\vp_1 = \eta_1 - \eta_1(0),
\]
and
\[
\vp_2 = \eta_2 + \eta_1(0).
\]
Then $\vp_1^* \in H^2_0(\Sn)^{conj}$ and $\vp_2^* \in H^2(\Sn)$, and
\[
\theta = \vp_1^* + \vp_2^*.
\]
As $\vp_1^* \in H^2_0(\Sn)^{conj}$, we have $\overline{\vp_1^*} \in H^2_0(\Sn)$, and hence
\[
\begin{split}
\langle \theta, \vp_1^* \rangle_{L^2(\Sn)} & = \int_{\Sn} \theta \overline{\vp_1^*} d\sigma
\\
& = \chi_{\theta} (\overline{\vp_1^*})
\\
& = X_{\clq} (\overline{\vp_1^*})
\\
& = \int_{\Sn} \psi^* \overline{\vp_1^*} d \sigma
\\
& = \langle \overline{\vp_1^*}, \overline{\psi^*}\rangle_{L^2(\Sn)}
\\
& = 0,
\end{split}
\]
which implies
\[
\langle \theta, \vp_1^* \rangle_{L^2(\Sn)} = 0.
\]
Now as $\theta = \vp_1^* + \vp_2^*$ and $\langle \vp_1^*, \vp_2^* \rangle_{L^2(\Sn)} = 0$, we have
\[
0 = \langle \theta, \vp_1^* \rangle_{L^2(\Sn)} = \langle \vp_1^* + \vp_2^*, \vp_1^* \rangle_{L^2(\Sn)} = \|\vp_1^*\|_{L^2(\Sn)} ^2,
\]
that is, $\vp_1^* = 0$, and hence
\[
\theta = \vp_2^* \in H^2(\Sn).
\]
We also have $\theta \in L^{\infty}(\Sn)$, so that
\[
\vp_2^* \in L^{\infty}(\Sn) \cap H^2(\Sn).
\]
Since $\vp_2 \in H^2(\B^n)$, \eqref{eqn: Poisson P f} implies that
\[
\vp_2 = \clp[\vp_2^*],
\]
where $\clp$ denotes the Poisson integral (see \cite[page 41]{Rud}). In other words, for any $z \in \B^n$, we have
\[
\vp_2(z) = \clp[\vp_2^*](z) = \int_{\Sn} P(z, \zeta) \vp_2^*(\zeta) \, d\sigma(\zeta),
\]
where
\[
P(z, \zeta) = \frac{(1 - \|z\|^2)^n}{|1 - \langle z, \zeta \rangle|^{2n}}.
\]
for all $z \in \B^n$ and $\zeta \in \Sn$, is the Poisson kernel. For $z \in \B^n$, we compute
\[
\begin{split}
|\vp_2(z)| & = \Big|\int_{\Sn} P(z, \zeta) \vp_2^*(\zeta) d\sigma(\zeta) \Big|
\\
& \leq \int_{\Sn} P(z, \zeta) |\vp_2^*(\zeta)| d\sigma(\zeta)
\\
& \leq \|\vp_2^*\|_{L^{\infty}(\Sn)},
\end{split}
\]
which proves that $\vp_2 \in H^{\infty}(\B^n)$, and
\[
\|\vp_2\|_{H^{\infty}(\B^n)} \leq \|\vp_2^*\|_{L^{\infty}(\Sn)} = \|\theta\|_{L^{\infty}(\Sn)} \leq 1.
\]
We recall from \eqref{conj perp} that
\[
\int_{\Sn} (\theta - \psi)f d\sigma = 0,
\]
for all $f \in \clq^{conj}$. Now $\theta = \vp_2^*$, for $\vp_2 \in H^{\infty}(\B^n)$ implies
\[
\vp_2 - \psi \in \clq^{\perp}.
\]
Finally, since $\|\vp_2\|_{\infty} \leq 1$, by Corollary \ref{lift lemma}, we conclude that $X$ admits a lift, namely $S_\theta$. In summary, we have proved the existence of a function $\theta \in \cls(\B^n)$ such that
\begin{equation}\label{eqn: |theta| = XQ 1}
\begin{cases}
\chi_\theta|_{\clm_\clq} = X_\clq,
\\
X = S_\theta, \text{ and }
\\
\|\theta\|_\infty = \|X_\clq\| \leq 1.
\end{cases}
\end{equation}
This completes the proof of the theorem.
\end{proof}

The kernel space of the functional $X_\clq$ will play an important role in obtaining a quantitative characterization of possible liftings (see Lemma \ref{dis norm} Theorem \ref{lift dis}).

Similar qualitative characterization of lifting also holds on the polydisc \cite{KD}. However, the proof of the above result differs from that in the polydisc case, as the approach used there is not applicable to the unit ball. Specifically, we avoided the use of an orthonormal basis for $L^2(\Sn)$, which was a key technique in the polydisc setting (see Section \ref{sec: remark} for further discussion). Moreover, we have the following remark:

\begin{remark}
In the proof of the sufficiency part of the theorem, although $X_\clq$ is assumed to be contractive, we do not use the contractivity of $X$ to obtain a lift. Instead, we show that the contractivity of $X_\clq$ forces $X$ to admit a lift, and hence $X$ is a contraction.
\end{remark}

Given a contractive module map $X \in \clb_1(\clq)$, it is natural to ask for the infimum of the norms of all symbols $\vp \in \cls(\B^n)$ satisfying
\[
X=S_\varphi.
\]
The following result shows that this quantity is precisely $\|X_\clq\|$. This not only highlights the power of the functional $X_{\clq}$ but also shows the accuracy and effectiveness of its construction.

\begin{theorem}\label{thm: norm XQ}
Let $\clq \subseteq H^2(\B^n)$ be a quotient module, and let $X \in \clb_1(\clq)$ be a module map that admits a lift. Then the functional $X_\clq: (\clm_\clq, \|\cdot\|_1) \raro \mathbb{C}$ from Theorem \ref{contr lift} satisfies
\[
\|X_\clq\| = \inf\{\|\vp\|_\infty: \vp \in \cls(\B^n) \text{ and } X = S_\vp\}.
\]
Moreover, there exists $\theta \in \cls(\B^n)$ such that $S_\theta$ lifts $X$ and $\|\theta\|_\infty = \|X_\clq\|$.
\end{theorem}
\begin{proof}
If $X = S_\vp$ for some $\vp \in \cls(\B^n)$, by the duality \eqref{eqn: dual}, we have
\[
\|\chi_{\vp}\| = \|\vp\|_{\infty} \leq 1.
\]
and by the necessary part of Theorem \ref{contr lift} (or see \eqref{eqn: chi = vp}), we know that
\[
\chi_\vp|_{\clm_\clq} = X_\clq.
\]
Therefore, we have
\[
\|X_\clq\| = \|\chi_\vp|_{\clm_\clq} \| \leq \|\chi_\vp\| = \|\vp\|_{\infty}.
\]
On the other hand, since $X$ admits a lift, $X_\clq$ is a contraction. From this point, we follow the proof of the sufficient part of Theorem \ref{contr lift} and arrive at identity \eqref{eqn: |theta| = XQ 1}: $X = S_\theta$ and $\chi_\theta|_{\clm_\clq} = X_\clq$ for some $\theta \in \cls(\B^n)$ with
\[
\|\theta\|_\infty = \|X_\clq\| \leq 1.
\]
This completes the proof of the theorem.
\end{proof}

In a sense, $\|X_\clq\|$ represents the optimal lifting norm for $X$. The above norm identity will play a crucial role in the subsequent developments of this paper.

\section{Quantitative characterizations}\label{sec: quant char}

We use all the characterizations obtained thus far in this paper to propose a quantitative characterization of lifting. Here, “quantitative’’ refers to a distance formula that bears a resemblance to the classical Nehari theorem \cite{Nehari}.

Given a normed linear space $V$, a nonempty set $S \subseteq V$ and a vector $x \in V$, the distance from $x$ to $S$ is defined by
\[
\text{dist}_V (x, S) = \inf \{\|x - y\|: y \in S\}.
\]
The following lemma is key to our quantitative analysis of lifting:

\begin{lemma}\label{dis norm}
Let $V$ be normed linear space, $\eta$ be a nonzero functional in $V^*$, and let $x \in V$. Then
\[
\text{dist}_V (x, \ker \eta) = \frac{|\eta(x)|}{\|\eta\|}.
\]
\end{lemma}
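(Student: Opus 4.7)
The plan is to prove the identity by establishing the two inequalities separately. The lower bound $\text{dist}_X(x,\ker f) \geq |f(x)|/\|f\|$ is the easier direction: for any $y \in \ker f$, linearity gives $f(x) = f(x-y)$, and the definition of the operator norm yields $|f(x)| = |f(x-y)| \leq \|f\|\,\|x-y\|$. Rearranging and taking infimum over $y \in \ker f$ gives the desired lower bound.

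For the reverse inequality $\text{dist}_X(x,\ker f) \leq |f(x)|/\|f\|$, I would argue by approximation using the definition of $\|f\|$ as a supremum. Fix $\epsilon > 0$ and choose $z \in X$ with $\|z\| = 1$ and $|f(z)| > \|f\| - \epsilon$ (this is possible since $f \neq 0$, so $\|f\| > 0$ and the supremum is attained arbitrarily closely). Then set
\[
y = x - \frac{f(x)}{f(z)} z.
\]
A direct computation shows $f(y) = f(x) - \frac{f(x)}{f(z)} f(z) = 0$, so $y \in \ker f$. Consequently,
\[
\text{dist}_X(x,\ker f) \leq \|x - y\| = \frac{|f(x)|}{|f(z)|} < \frac{|f(x)|}{\|f\| - \epsilon}.
\]
Letting $\epsilon \to 0^+$ completes the proof.

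There is no substantial obstacle here: the argument is standard functional analysis and relies only on linearity of $f$, the definition of $\|f\|$ as the supremum of $|f(z)|$ over the unit ball, and the assumption $f \neq 0$ (which ensures that $\|f\| > 0$ and that a $z$ with $f(z) \neq 0$ exists). The only mild subtlety is to be careful that $f(z) \neq 0$ when constructing $y$, which is guaranteed once $\epsilon < \|f\|$.
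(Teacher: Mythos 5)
Your proof is correct and follows essentially the same approach as the paper: the lower bound via $|f(x)| = |f(x-y)| \leq \|f\|\,\|x-y\|$, and the upper bound by constructing the element $x - \frac{f(x)}{f(z)}z \in \ker f$ for $z$ nearly attaining the norm of $f$. The only cosmetic difference is that the paper phrases the approximation using a sequence $\{y_n\}$ with $|f(y_n)|/\|y_n\| \to \|f\|$, whereas you use a single $\epsilon$-approximant; these are interchangeable.
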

\begin{proof}
Let $y \in \ker \eta$. Then
\[
|\eta(x)| = |\eta(x - y)| \leq \|\eta\|\|x - y\|.
\]
and hence
\[
\frac{|\eta(x)|}{\|\eta\|} \leq \|x - y\|,
\]
so that
\[
\text{dist}_V (x, \ker \eta) \geq \frac{|\eta(x)|}{\|\eta\|}.
\]
For the reverse inequality, we recall that $\|\eta\| = \sup_{y \neq 0} \frac{|\eta(y)|}{\|y\|}$. Therefore, there exists a sequence $\{y_k\}_k \subseteq V$ such that
\[
\eta (y_k) \neq 0,
\]
for all $k$, and
\[
\frac{|\eta(y_k)|}{\|y_k\|} \raro \|\eta\|.
\]
Since
\[
x - \frac{\eta(x)}{\eta(y_k)} y_k \in \ker \eta,
\]
we have
\[
\begin{split}
\text{dist}_V (x, \ker \eta) & \leq \Big\| x - x - \frac{\eta(x)}{\eta(y_k)} y_k \Big\|
\\
& = \Big\|\frac{\eta(x)}{\eta(y_k)} y_k\Big\|
\\
& = \frac{|\eta(x)|}{|\eta(y_k)|}\|y_k\|
\\
& \longrightarrow \frac{|\eta(x)|}{\|\eta\|},
\end{split}
\]
and hence $\text{dist}_V (x, \ker \eta) \leq \frac{|\eta(x)|}{\|\eta\|}$.
\end{proof}

Given a quotient module $\clq \subseteq H^2(\B^n)$ and a module map $X \in \clb_1(\clq)$, we have defined
\[
\psi = X(P_\clq 1),
\]
and
\[
\clm_{\clq} = \clq^{conj} \dot+ [\clm(\Sn) \dot+ H^2_0(\Sn)].
\]
Under the given assumptions, also recall that $X_\clq : (\clm_\clq, \|\cdot\|_1) \raro \C$ is defined by
\[
X_\clq f = \int_{\Sn} \psi f d\sigma,
\]
for all $f \in \clm_\clq$ (cf. Theorem \ref{contr lift}). Define
\[
\widetilde{\clm_{\clq}} = [\clq^{conj} \ominus \{\overline{\psi}\}] \dot+ [\clm(\Sn) \dot+ H^2_0(\Sn)].
\]
This space is precisely the kernel of $X_\clq$:

\begin{lemma}\label{lemm: ker XQ}
Consider the functional $X_\clq: (\clm_\clq, \|\cdot\|_1) \raro \mathbb{C}$. Then $\ker X_\clq = \widetilde{\clm_{\clq}}$.
\end{lemma}
\begin{proof}
We write $X_\clq f = \int_{\Sn} \psi f d\sigma$ as
\[
X_{\clq} f = \la f, \overline{\psi} \ra_{L^2(\Sn)},
\]
for all $f \in \clm_\clq$, and consequently
\[
\ker X_{\clq} = [\clq^{conj} \ominus \{ \overline{\psi} \}] \dot+ [\clm(\Sn) \dot+ H^2_0(\Sn)],
\]
which completes the proof of the lemma.
\end{proof}

In view of the above and Lemma \ref{dis norm}, we have the following characterization of lifting. We again emphasize that, to avoid triviality, we always assume that the module maps $X \in \clb(\clq)$ are nonzero (see \eqref{eqn: X & Q nonzero}).

\begin{theorem}\label{lift dis}
Let $\clq \subseteq H^2(\B^n)$ be a quotient module, and let $X \in \clb_1(\clq)$ be a module map. Then $X$ admits a lift if and only if
\[
\text{dist}_{L^1(\Sn)}\Big(\frac{\overline \psi}{\|\psi\|_2^2}, \widetilde{\clm_{\clq}} \Big) \geq 1.
\]
\end{theorem}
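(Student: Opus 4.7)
The plan is to combine Theorem \ref{contr lift} (which equates liftability with the contractivity of $X_\clq$), Lemma \ref{lemm: ker XQ} (which identifies $\widetilde{\clm_{\clq}}$ with $\ker X_\clq$), and Lemma \ref{dis norm} (a distance-to-kernel formula), evaluating the distance formula at the distinguished vector $\overline{\psi}/\|\psi\|_2^2$. Throughout we may assume $\psi \neq 0$ since otherwise $X(P_\clq 1)=0$ and, using the joint cyclicity in \eqref{eqn: Q cyclic}, $X=0$ lifts trivially to $\vp=0$, while the distance condition should be interpreted vacuously.

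The first step is to observe that $\overline{\psi}/\|\psi\|_2^2$ belongs to $\clm_{\clq}$. Indeed, $\psi \in \clq$ so $\overline{\psi} \in \clq^{conj} \subseteq \clm_{\clq}$. Viewing $X_\clq$ as a continuous linear functional on the normed space $(\clm_{\clq}, \|\cdot\|_1)$ and using the pairing identity $X_\clq f = \langle f, \overline{\psi}\rangle_{L^2(\Sn)}$ established in the proof of Lemma \ref{lemm: ker XQ}, one computes
\[
X_\clq\!\left(\frac{\overline{\psi}}{\|\psi\|_2^2}\right) = \Big\langle \frac{\overline{\psi}}{\|\psi\|_2^2}, \overline{\psi}\Big\rangle_{L^2(\Sn)} = 1.
\]

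Next, I apply Lemma \ref{dis norm} to the functional $X_\clq$ on $(\clm_{\clq}, \|\cdot\|_1)$ at the point $\overline{\psi}/\|\psi\|_2^2$. This yields
\[
\text{dist}_{\clm_\clq}\!\left(\frac{\overline{\psi}}{\|\psi\|_2^2},\ \ker X_\clq\right) \;=\; \frac{1}{\|X_\clq\|}.
\]
Since $\clm_{\clq}$ carries the $L^1(\Sn)$ norm as an inherited norm, and since $\ker X_\clq = \widetilde{\clm_{\clq}} \subseteq \clm_{\clq}$ and $\overline{\psi}/\|\psi\|_2^2 \in \clm_{\clq}$, this distance is identical to $\text{dist}_{L^1(\Sn)}(\overline{\psi}/\|\psi\|_2^2, \widetilde{\clm_{\clq}})$. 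Hence the distance inequality $\text{dist}_{L^1(\Sn)}(\overline{\psi}/\|\psi\|_2^2, \widetilde{\clm_{\clq}}) \geq 1$ is equivalent to $\|X_\clq\| \leq 1$, i.e., $X_\clq$ is a contraction.

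Finally, Theorem \ref{contr lift} translates contractivity of $X_\clq$ on $(\clm_{\clq},\|\cdot\|_1)$ into the statement that $X$ admits a lift, completing the equivalence. I expect no substantial obstacle; the only point that warrants care is the identification of the $L^1(\Sn)$-distance with the distance computed in the subspace $\clm_{\clq}$, which is immediate because the norm on $\clm_{\clq}$ is inherited and the kernel in question already sits inside $\clm_{\clq}$.
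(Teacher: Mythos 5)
Your proposal is correct and follows essentially the same route as the paper: apply Lemma \ref{lemm: ker XQ} to identify $\widetilde{\clm_\clq}$ with $\ker X_\clq$, compute $X_\clq(\overline\psi/\|\psi\|_2^2)=1$, invoke the distance-to-kernel formula of Lemma \ref{dis norm} to get $\text{dist}=1/\|X_\clq\|$, and finish with Theorem \ref{contr lift}. Your remarks about the $\psi=0$ degenerate case and the identification of the $L^1(\Sn)$-distance with the intrinsic distance in $(\clm_\clq,\|\cdot\|_1)$ are minor clarifications the paper leaves implicit, and both are handled correctly.
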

\begin{proof}
Lemma \ref{lemm: ker XQ} yields
\[
\ker X_{\clq} = \widetilde{\clm_{\clq}}.
\]
Thus
\[
\text{dist}_{L^1(\Sn)}\Big(\frac{\overline \psi}{\|\psi\|_2^2}, \widetilde{\clm_{\clq}} \Big) = \text{dist}_{L^1(\Sn)}\Big(\frac{\overline \psi}{\|\psi\|_2^2}, \ker X_{\clq} \Big) .
\]
Observe that
\[
X_{\clq} \Big( \frac{\overline{\psi}}{\|\psi\|_2^2}\Big) = \int_{\Sn} \psi \frac{\overline{\psi}}{\|\psi\|_2^2} d\sigma = 1,
\]
We then have, by Lemma $\ref{dis norm}$, that
\[
\text{dist}_{L^1(\Sn)}\Big(\frac{\overline \psi}{\|\psi\|_2^2}, \widetilde{\clm_{\clq}}\Big) = \frac{1}{\|X_{\clq}\|}.
\]
Again, Theorem \ref{contr lift} states that $X$ admits a lift if and only if
\[
\|X_{\clq}\| \leq 1.
\]
In view of the above, this condition is equivalent to requiring that
\[
\text{dist}_{L^1(\Sn)}\Big(\frac{\overline \psi}{\|\psi\|_2^2}, \widetilde{\clm_{\clq}} \Big) \geq 1.
\]
This completes the proof of the theorem.
\end{proof}

In the above proof, we have highlighted the key identity (see Lemma \ref{lemm: ker XQ})
\[
\ker X_{\clq} = [\clq^{conj} \ominus \{ \overline{\psi} \}] \dot+ [\clm(\Sn) \dot+ H^2_0(\Sn)].
\]
This, in particular, once again provides deeper insight into the effectiveness of the functional $X_{\clq}$.

\newsection{Characterizations by approximations}\label{Sec: inner fn}

In this section, we establish a connection between the lifting problem and the classes of inner functions and extremal solutions. Recall from Section \ref{sec: Carath approx} that
\[
\cle(\B^n) = \{\vp \in \cls(\B^n): \vp \text{ is a solution to some extremal problem}\}.
\]
Let $\vp \in \cls(\B^n)$. Then there is a sequence of inner functions $\{\vp_k\}_k \subseteq \cli(\B^n)$ such that (see Theorem \ref{thm: intro Alek Rud})
\[
\lim_{k \raro \infty} \vp_k = \vp,
\]
uniformly on compact subsets of $\B^n$, and
\[
{w^*-}\lim_{k \raro \infty} \vp_k = \vp \text{ on } L^1(\Sn).
\]
Recall that ${w^*-}\lim_{k \raro \infty} \vp_k = \vp$ on a set $S \subseteq L^1(\Sn)$ means
\[
\lim_{k \raro \infty} \int_{\Sn} \vp_k f d\sigma = \int_{\Sn} \vp f d\sigma,
\]
for all $f \in S$. In Theorem \ref{thm:Car sv}, we proved the same result for the class of extremal functions $\cle(\B^n)$. Motivated by the roles of these two classes and their approximation properties for Schur functions, we shall use the notation
\[
\cla(\B^n),
\]
to denote either $\cli(\B^n)$ or $\cle(\B^n)$. We refer to $\cla(\B^n)$ as the class of \textit{approximation functions}. Combining the Aleksandrov-Rudin theorem (Theorem \ref{thm: intro Alek Rud}) with Theorem \ref{thm:Car sv}, we obtain the following result.

\begin{theorem}\label{thm: inner ext fns}
Let $\vp \in \cls(\B^n)$. Then there is a sequence $\{\vp_k\}_k \subseteq \cla(\B^n)$ such that
\[
\lim_{k \raro \infty} \vp_k = \vp,
\]
uniformly on compact subsets of $\B^n$. Moreover,
\[
{w^*-}\lim_{k \raro \infty} \vp_k = \vp \text{ on } L^1(\Sn).
\]
\end{theorem}

In other words, the last assertion of the above theorem asserts that
\[
\overline{\cla(\B^n)}^{w^*} = \cls(\B^n).
\]

With this, we continue in the setting of Theorem \ref{contr lift} and present yet another characterization of lifting in terms of inner functions. Given a quotient module $\clq \subseteq H^2(\B^n)$ and a module map $X \in \clb_1(\clq)$, we recall that (see Theorem \ref{contr lift})
\[
X_{\clq} f = \int_{\Sn} \psi f d\sigma,
\]
for $f \in \clm_{\clq}$, defines a functional $X_{\clq}: (\clm_\clq, \|\cdot\|_1) \longrightarrow \mathbb{C}$, where $\psi = X(P_{\clq} 1)$ and
\[
\clm_{\clq} = \clq^{conj} \dot+ [\clm(\Sn) \dot+ H^2_0(\Sn)].
\]
We connect the function $\psi$ with $\cla(\B^n)$ as follows:

\begin{theorem}\label{thm: lifting in terms of inner}
Let $\clq \subseteq H^2(\B^n)$ be a quotient module, and $X \in \clb_1(\clq)$ be a module map, and let $\psi = X(P_{\clq} 1)$. Then $X$ admits a lift if and only if there is a sequence $\{\vp_k\}_k \subseteq \cla(\B^n)$ such that
\[
{w^*-}\lim_{k \raro \infty} \vp_k = \psi \text{ on }\clm_\clq.
\]
\end{theorem}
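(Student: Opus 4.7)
The plan is to bridge this equivalence using two tools already in hand: Rudin's weak-$*$ density theorem (Theorem \ref{thm : inner weak* dense}) on one side, and the contractivity characterization of lifting (Theorem \ref{contr lift}) on the other. The idea is that a sequence of inner functions $w^*$-converging to $\psi$ on $\clm_\clq$ is essentially the same data as a contractive extension of $X_\clq$ to $L^1(\Sn)$, which by Hahn--Banach corresponds to $X_\clq$ being a contraction on $(\clm_\clq,\|\cdot\|_1)$.

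For the forward direction, suppose $X$ admits a lift. By Theorem \ref{lift lemma}, there exists $\vp \in \cls(\B^n)$ such that $\psi - \vp \in \clq^\perp$. The argument in the proof of Theorem \ref{contr lift} establishes the crucial identity
\[
X_\clq f = \int_{\Sn} \vp f \, d\sigma, \qquad (f \in \clm_\clq),
\]
that is, $\chi_\vp|_{\clm_\clq} = X_\clq$. Now apply Rudin's Theorem \ref{thm : inner weak* dense} to $\vp$: there is a sequence of inner functions $\{u_i\} \subseteq \cls(\B^n)$ with $w^*\text{-}\lim_i u_i = \vp$ on $L^1(\Sn)$. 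Restricting to $f \in \clm_\clq \subseteq L^1(\Sn)$ gives
\[
\lim_i \int_{\Sn} u_i f \, d\sigma = \int_{\Sn} \vp f \, d\sigma = \int_{\Sn} \psi f \, d\sigma,
\]
which is exactly the statement $w^*\text{-}\lim_i u_i = \psi$ on $\clm_\clq$.

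For the converse, assume such a sequence $\{u_i\}$ exists. Since each $u_i$ is inner, $\|u_i\|_\infty = 1$, and hence for every $f \in \clm_\clq$,
\[
\Big| \int_{\Sn} u_i f \, d\sigma \Big| \leq \|u_i\|_\infty \|f\|_1 = \|f\|_1.
\]
Passing to the limit gives $|X_\clq f| = |\int_{\Sn} \psi f \, d\sigma| \leq \|f\|_1$, so $X_\clq$ is a contraction on $(\clm_\clq, \|\cdot\|_1)$. Theorem \ref{contr lift} then yields a lift of $X$.

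The only delicate point is the invocation of the identity $\chi_\vp|_{\clm_\clq} = X_\clq$ in the forward direction: without this, Rudin's density theorem would only produce inner functions converging to $\vp$, not to $\psi$. Fortunately, this identity was already verified term by term (on $\clq^{conj}$, on $\clm(\Sn)$, and on $H^2_0(\Sn)$) within the proof of Theorem \ref{contr lift}, so there is no additional work required. The remainder of the proof is a short two-line estimate, so I expect no serious obstacle.
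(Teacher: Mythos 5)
Your proof is correct and follows essentially the same route as the paper's: the forward direction uses the identity $\chi_\vp|_{\clm_\clq} = X_\clq$ from the proof of Theorem \ref{contr lift} together with Rudin's $w^*$-density of inner functions, and the converse uses the trivial bound $|\int_{\Sn} u_i f\, d\sigma| \le \|f\|_1$ plus Theorem \ref{contr lift}. The only cosmetic difference is that the paper names the lifting symbol $\theta$ rather than $\vp$, and your converse estimate is stated a bit more cleanly.
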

\begin{proof}
Suppose $X$ admits a lift. From the proof of Theorem \ref{contr lift}, and specifically by equation \eqref{eqn: chi = vp}, there exists $\theta \in \cls(\B^n)$ such that for $f \in \clm_{\clq}$, we have
\[
X_{\clq} f = \int_{\Sn} \theta f  d\sigma,
\]
for all $f \in \clm_{\clq}$. By Theorem \ref{thm: inner ext fns}, there exists a sequence $\{\vp_k\}_k \subseteq \cla(\B^n)$ such that
\[
{w^*-}\lim_{k \raro \infty} \vp_k = \theta \text{ on } L^1(\Sn).
\]
In particular, $w^*-\lim_{k \raro \infty} \vp_k = \theta$ on $\clm_\clq$. Moreover, for each $f \in \clm_{\clq}$, since
\[
X_\clq f = \int_{\Sn} \psi f d\sigma = \int_{\Sn} \theta f d\sigma,
\]
it follows that
\[
\lim_{k \raro \infty} \int_{\Sn } \vp_k f d\sigma = \int_{\Sn} \psi f d\sigma,
\]
for all $f \in \clm_{\clq}$, that is, ${w^*-}\lim_{k \raro \infty} \vp_k = \psi$ on $\clm_\clq$.

\noindent Conversely, assume that ${w^*-}\lim_{k \raro \infty} \vp_k = \psi$ on $\clm_\clq$. For $f \in \clm_{\clq}$, we have
\[
\begin{split}
|X_{\clq} f| & = \Big|\int_{\Sn} \psi f d\sigma\Big|
\\
& = \Big|\lim_{k \raro \infty} \int_{\Sn} \vp_k f d\sigma\Big|
\\
& \leq \lim_{k \raro \infty} \int_{\Sn} |\vp_k f| d\sigma
\\
& \leq \|f\|_1,
\end{split}
\]
that is, $X_{\clq}$ on $(\clm_{\clq}, \|\cdot\|_1)$ is a contraction, and therefore, by Theorem \ref{contr lift}, it follows that $X$ admits a lift.
\end{proof}

Now, we examine the structure of $\clm_{\clq}$ more closely in order to prove a more economical version of the above theorem:

\begin{corollary}\label{Cor: lift inner q}
Let $\clq \subseteq H^2(\B^n)$ be a quotient module, and $X \in \clb_1(\clq)$ be a module map, and let $\psi = X(P_{\clq} 1)$. Then $X$ admits a lift if and only if there is a sequence $\{\vp_k\}_k \subseteq \cla(\B^n)$ such that
\[
w^*-\lim_{k \raro \infty} \vp_k = \psi \text{ on } \clq^{conj}.
\]
\end{corollary}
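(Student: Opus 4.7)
The plan is to deduce this corollary directly from Theorem \ref{thm: lifting in terms of inner} by observing that, because inner functions and $\psi$ all live in $H^2(\Sn)$, the weak-$*$ convergence on $\clq^{conj}$ automatically promotes to weak-$*$ convergence on the full space $\clm_{\clq} = \clq^{conj} \dot+ [\clm(\Sn) \dot+ H^2_0(\Sn)]$. Thus the two conditions are equivalent via Theorem \ref{thm: lifting in terms of inner}.

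First I would handle the forward direction: if $X$ admits a lift, then Theorem \ref{thm: lifting in terms of inner} produces inner functions $\{u_i\} \subseteq \cls(\B^n)$ with $w^*\text{-}\lim_i u_i = \psi$ on $\clm_\clq$. Since $\clq^{conj} \subseteq \clm_\clq$, restricting the test functions to $\clq^{conj}$ gives the desired conclusion, with nothing further to check.

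For the converse, suppose $\{u_i\} \subseteq \cls(\B^n)$ is a sequence of inner functions with $\lim_i \int_{\Sn} u_i f \, d\sigma = \int_{\Sn} \psi f \, d\sigma$ for every $f \in \clq^{conj}$. The key observation, which I expect to be the main substance of the proof, is that the analogous convergence on $\clm(\Sn)$ and $H^2_0(\Sn)$ is \emph{automatic}, because both $u_i$ and $\psi$ belong to $H^2(\Sn)$. Concretely, for any $g \in H^2(\Sn)$ and any $f \in L^2(\Sn)$, we can rewrite $\int_{\Sn} g f \, d\sigma = \langle f, \overline{g}\rangle_{L^2(\Sn)}$, and $\overline{g} \in H^2(\Sn)^{conj}$. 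If $f \in \clm(\Sn)$, then by definition $f \perp H^2(\Sn)^{conj}$, so the inner product vanishes. If $f \in H^2_0(\Sn)$, then Lemma \ref{H^2 conj orth} gives $H^2_0(\Sn) \perp H^2(\Sn)^{conj}$, so again the inner product vanishes. Applying this to $g = u_i$ and to $g = \psi$ in turn, both sides $\int u_i f\, d\sigma$ and $\int \psi f\, d\sigma$ are identically zero for all $f \in \clm(\Sn) \dot+ H^2_0(\Sn)$, so the weak-$*$ convergence on this part of $\clm_{\clq}$ is trivial.

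Combining this with the given convergence on $\clq^{conj}$, we conclude that $w^*\text{-}\lim_i u_i = \psi$ on all of $\clm_{\clq}$, and Theorem \ref{thm: lifting in terms of inner} then yields that $X$ admits a lift. The only possible subtlety lies in verifying that the orthogonality-based vanishing arguments apply uniformly (not in the limit), but this is immediate from the $L^2$ orthogonality recorded in Lemma \ref{H^2 conj orth} together with the defining property of $\clm(\Sn)$, so I anticipate no real technical obstacle.
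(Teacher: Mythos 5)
Your proposal is correct and follows essentially the same route as the paper: the forward direction is a restriction of Theorem \ref{thm: lifting in terms of inner}, and for the converse you decompose $g \in \clm_{\clq}$ along $\clq^{conj} \dot+ [\clm(\Sn) \dot+ H^2_0(\Sn)]$ and show that the contributions from $\clm(\Sn)$ and $H^2_0(\Sn)$ vanish identically for any $H^2$ function (in particular for each $u_i$ and for $\psi$), reducing everything to the hypothesis on $\clq^{conj}$. The only cosmetic difference is that the paper handles the $H^2_0(\Sn)$ piece for $u_i$ via the evaluation $\int_{\Sn} u_i g_3\, d\sigma = u_i(0) g_3(0) = 0$, whereas you use the same $L^2$-orthogonality $H^2_0(\Sn) \perp H^2(\Sn)^{conj}$ from Lemma \ref{H^2 conj orth} uniformly for both $u_i$ and $\psi$; your treatment is slightly more streamlined but not substantively different.
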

\begin{proof}
The necessary part is a special case of Theorem \ref{thm: lifting in terms of inner} (note that $\clq^{conj} \subseteq \clm_\clq$). For sufficiency, suppose there exists a sequence of functions $\{\vp_k\}_k \subseteq \cla(\B^n)$ such that $\lim_{k \raro \infty} \vp_k = \psi$ on $\clq^{conj}$. In view of Theorem \ref{thm: lifting in terms of inner}, it is enough to prove that
\[
w^*-\lim_{k \raro \infty} \vp_k = \psi \text{ on } \clm_\clq.
\]
To prove this, pick any $g \in \clm_{\clq} = \clq^{conj} \dot+ [\clm(\Sn) \dot+ H^2_0(\Sn)]$. Then, there exist $g_1 \in \clq^{conj}$, $g_2 \in \clm(\Sn)$, and $g_3 \in H^2_0(\Sn)$ such that
\[
g = g_1 + g_2 + g_3.
\]
Fix a $k$. Since $g_2 \in \clm(\Sn)$, we have
\[
\int_{\Sn}\vp_k g_2\, d\sigma = \la \vp_k, \overline{g_2} \ra_{L^2(\Sn)} = 0 = \la \psi, \overline{g_2} \ra = \int_{\Sn} \psi g_2 d\sigma,
\]
and also
\[
\int_{\Sn}\vp_k g_3 d\sigma = \langle \vp_k g_3, 1 \rangle = \vp_k(0) g_3 (0) = 0,
\]
as $g_3 \in H^2_0(\Sn)$. Also, since
\[
\int_{\Sn} \psi g_3 d\sigma = \langle g_3, \overline{\psi} \rangle = 0,
\]
we have
\[
\int_{\Sn} \vp_k g d\sigma = \int_{\Sn} \vp_k (g_1 + g_2 + g_3) d\sigma = \int_{\Sn} \vp_k g_1 d\sigma.
\]
In view of our assumption, as $g_1 \in \clq^{conj}$, we have
\[
\int_{\Sn} \vp_k g_1 d\sigma \raro \int_{\Sn} \psi g_1 d\sigma,
\]
and consequently
\[
\int_{\Sn} \vp_k g d\sigma = \int_{\Sn} \vp_k g_1 d\sigma \raro \int_{\Sn} \psi g_1 d\sigma = \int_{\Sn} \psi g d\sigma.
\]
This completes the proof of the corollary.
\end{proof}

We now focus on lifting of module maps on finite-dimensional quotient modules. We specialize to a weak topology on $L^\infty(\Sn)$. Let $S \subseteq L^1(\Sn)$. For each $f \in S$, define $\hat{f} \in (L^{\infty}(\Sn))^*$ by
\[
\hat{f}(\vp) = \int_{\Sn} \varphi f d\sigma,
\]
for all $\vp \in L^\infty(\Sn)$, and set
\[
\widehat{S} := \{\hat{f}: f \in S\} \subseteq (L^{\infty}(\Sn))^*.
\]
We let $\tau_S$ denote the weak-topology on $L^{\infty}(\Sn)$ generated by the family of functionals $\widehat{S}$. In other words,
\[
(L^{\infty}(\Sn), \tau_S),
\]
is the smallest topology on $L^{\infty}(\Sn)$ for which each functional $\hat{f} : L^{\infty}(\Sn) \raro \C$, $f \in S$, is continuous. Therefore, we have the following: Let $\{\vp_i\}$ be a net in $L^{\infty}(\Sn)$. Then
\[
\{\vp_{i}\} \longrightarrow \vp \text{ in } (L^{\infty}(\Sn), \tau_{\cls}),
\]
if and only if
\[
\hat{f}(\vp_i) \longrightarrow \hat{f}(\vp),
\]
for all $f \in S$. The following result establishes a connection between the weak topology introduced above and the lifting on finite-dimensional quotient modules:

\begin{theorem}\label{thm: lift fd}
Let $\clq \subseteq H^2(\B^n)$ be a finite-dimensional quotient module, and $X \in \clb_1(\clq)$ be a module map. Then $X$ admits a lift if and only if there is a sequence of functions $\{\vp_k\}_k \subseteq \cls(\B^n)$ such that
\[
\vp_k \longrightarrow \psi \text{ in } (L^{\infty}(\Sn), \tau_{\clq^{conj}}),
\]
where
\[
\psi = X(P_{\clq}1).
\]
\end{theorem}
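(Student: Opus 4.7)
The plan is to recognize that this theorem is essentially Corollary \ref{Cor: lift inner q} rewritten in the weak topology $\tau_{\clq^{conj}}$ just introduced, with the finite-dimensionality hypothesis ensuring that the rephrasing is well-posed and that a sequential notion of convergence suffices. So the argument is a bookkeeping exercise: unpack the topology, then invoke the corollary.

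First I would unfold the meaning of $\{u_i\} \to \psi$ in $(L^{\infty}(\Sn), \tau_{\clq^{conj}})$. By construction, this is equivalent to $\hat{f}(u_i) \to \hat{f}(\psi)$ for every $f \in \clq^{conj}$, which by the definition of $\hat{f}$ is the statement
\[
\int_{\Sn} u_i f \, d\sigma \longrightarrow \int_{\Sn} \psi f \, d\sigma, \qquad (f \in \clq^{conj}).
\]
This is precisely the condition $w^*-\lim_i u_i = \psi$ on $\clq^{conj}$ that appears in Corollary \ref{Cor: lift inner q}. With this identification, both directions of the theorem are immediate: if $X$ admits a lift, Corollary \ref{Cor: lift inner q} supplies a sequence of inner functions satisfying exactly the convergence above; conversely, any sequence of inner functions converging to $\psi$ in $\tau_{\clq^{conj}}$ supplies the hypothesis of the corollary, yielding a lift.

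The one point requiring care is that $\hat{f}$ is a priori only a functional on $L^{\infty}(\Sn)$, so the symbol $\hat{f}(\psi)$ is meaningful only when $\psi \in L^{\infty}(\Sn)$. This is where finite-dimensionality plays its essential role. A finite-dimensional joint invariant subspace of $(T_{z_1}^*, \ldots, T_{z_n}^*)$ on $H^2(\B^n)$ is spanned by reproducing kernels $\Sk(\cdot, w)$ (together with partial derivatives in the parameter $w$) at finitely many points of $\B^n$, and each such generator lies in $A(\B^n) \subseteq H^{\infty}(\B^n)$. Hence $\clq \subseteq H^{\infty}(\B^n)$, so the boundary function of $\psi = X(P_{\clq} 1)$ lies in $L^{\infty}(\Sn)$ and $\hat{f}(\psi)$ is well-defined. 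I do not anticipate any further obstacle beyond this boundedness observation; once it is recorded, the theorem reduces to a direct translation of Corollary \ref{Cor: lift inner q} into the topological language just set up.
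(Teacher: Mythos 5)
Your proposal is correct and follows essentially the same route as the paper: unfold the definition of $\tau_{\clq^{conj}}$-convergence into the weak-$*$ condition on $\clq^{conj}$, and then invoke Corollary \ref{Cor: lift inner q}, with the finite-dimensionality of $\clq$ used only to guarantee $\psi \in L^\infty(\Sn)$ so that $\hat{f}(\psi)$ makes sense. The paper simply asserts $\clq \subseteq H^\infty(\B^n)$ (citing \cite{GZ} and \cite{CG} elsewhere for $\clq \subseteq A(\B^n)$), whereas you justify it via the structural description of finite-dimensional quotient modules as spans of kernel functions and their derivatives; either way the key observation is the same.
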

\begin{proof}
Since $\clq$ is finite-dimensional, it follows that (see \cite[Chapter 2, Remark 2.5.5]{CG})
\[
\clq \subseteq H^\infty(\B^n).
\]
In particular, we have $\psi \in L^\infty(\Sn)$. Now, convergence
\[
\vp_k \longrightarrow \psi \text{ in } (L^{\infty}(\Sn), \tau_{\clq^{conj}}),
\]
is equivalent to the convergence of
\[
\hat{f}(\vp_k) \longrightarrow \hat{f}(\psi),
\]
for all $f \in \clq^{cconj}$. Equivalently, we have
\[
\int_{\Sn} \vp_k f d\sigma \longrightarrow \int_{\Sn} \psi f d\sigma,
\]
for all $f \in \clq^{conj}$. The result now follows from Corollary \ref{Cor: lift inner q}.
\end{proof}

Consider the set of all inner functions as a subset of $L^{\infty}(\Sn)$. Let $\clq$ be a finite-dimensional quotient module of $H^2(\B^n)$ and let $X \in \clb_1(\clq)$. Then the above result can be stated as follows: $X$ admits a lift if and only if
\[
\psi = X (P_\clq 1) \in \overline{\cla(\B^n)}^{(L^{\infty}(\Sn),\tau_{\clq^{conj}})},
\]
where $\cla(\B^n)$ stands for either the class $\cli(\B^n)$ of inner functions or the class $\cle(\B^n)$ of extremal solutions.

\newsection{Interpolation}\label{sec: interpolation}

When $n=1$, Sarason \cite{D Sarason} showed that the solution to the classical interpolation problem follows from his commutant lifting theorem. In a similar spirit, we apply the characterization of lifting to solve the interpolation problem for Schur functions on $\B^n$. In addition, we apply approximation results to give analogous characterizations of solvability of the interpolation problem.

Recall that $H^2(\B^n)$ is a reproducing kernel Hilbert space corresponding to the Szeg\"{o} kernel on $\B^n$ defined as
\[
K_n (z, w) = \frac{1}{(1 - \la z, w \ra)^n},
\]
for all $z, w \in \B^n$. Given $m$ distinct points $\clz = \{z_i\}_{i = 1}^m \subseteq \B^n$, define
\[
\clq_{\clz} = \text{span }\{K_n(\cdot , z_i): i=1, \ldots, m\}.
\]
Clearly, $\clq_\clz$ is an $m$-dimensional subspace of $H^2(\B^n)$. Since
\begin{equation}\label{eqn: T_z*i Kn}
T_{z_j}^* K_n(\cdot , v) = \overline{v_j} K_n(\cdot, v),
\end{equation}
for all $j=1, \ldots, n$, and $v =(v_1, \ldots, v_n) \in \B^n$, it follows that $\clq_{\clz}$ is also a quotient module of $H^2(\B^n)$. Observe that
\[
\clq_\clz \subset A(\B^n) \subset H^{\infty}(\B^n).
\]
Fix interpolation data $\clz = \{z_i\}_{i=1}^m \subseteq \B^n$ and $\clw = \{w_i\}_{i=1}^m \subseteq \D$. Following Sarason, we define $X_{\clz, \clw} \in \clb(\clq_{\clz})$ by
\begin{equation}\label{eqn: X* zw}
X_{\clz, \clw}^* \Sk(\cdot, z_j) = \overline{w_j} \Sk(\cdot, z_j),
\end{equation}
for all $j=1, \ldots, m$. By \eqref{eqn: T_z*i Kn}, it follows that $X_{\clz, \clw}$ is a module map. Following the construction of $\psi$ in Theorem \ref{contr lift}, define
\[
\psi = X_{\clz, \clw} (P_{\clq_{\clz}} 1).
\]
We claim that the $m \times m$ matrix
\[
\begin{bmatrix}
\Sk(z_i, z_j)
\end{bmatrix}_{m \times m} =
\begin{bmatrix}
\Sk(z_1, z_1) & \dots & \Sk(z_1, z_m)
\\
\vdots & \ddots & \vdots
\\
\Sk(z_m, z_1) & \dots & \Sk(z_m, z_m)
\end{bmatrix},
\]
is invertible. Indeed, we know that
\[
\Sk(z_i, z_j) = \langle \Sk(\cdot, z_j), \Sk(\cdot, z_i) \rangle,
\]
for all $i, j = 1, \ldots, m$, and that the set $\{\Sk(\cdot, z_i)\}_{i=1}^m$  consists of linearly independent vectors. Since the Szeg\"{o} kernel is positive definite and the points $z_1, \ldots, z_m$ are distinct, the above matrix is an invertible Gram matrix. Define scalars $c_1, \ldots, c_m$ by
\[
\begin{bmatrix}
c_1
\\
\vdots
\\
c_m
\end{bmatrix}
=
\begin{bmatrix}
\Sk(z_i, z_j)
\end{bmatrix}_{m \times m}^{-1}
\begin{bmatrix}
w_1
\\
\vdots\\
w_m
\end{bmatrix},
\]
and define $\psi_{\clz, \clw} \in \clq_\clz$ by
\begin{equation}\label{eqn: psi Z W}
\psi_{\clz, \clw} = \sum_{j = 1}^m c_j \Sk(\cdot, z_j).
\end{equation}
Note the important fact that
\begin{equation}\label{eqn: basic int fn}
\psi_{\clz, \clw}(z_j) = w_j,
\end{equation}
for all $j=1, \ldots, m$.

In the discussion preceding Theorem \ref{lemma: lift ball 1}, we have already observed that
\[
\psi = X (P_{\clq} 1) \neq 0,
\]
where $\clq$ is a quotient module of $H^2(\B^n)$ and $X \in \clb(\clq)$ is a module map. We reiterate that throughout this discussion we are working under the standing assumption that $\clq \neq \{0\}$ and $X \neq 0$ (see \eqref{eqn: X & Q nonzero}). Before relating the above $\psi$ to the function $\psi_{\clz, \clw}$ introduced in \eqref{eqn: psi Z W}, we first investigate the case when $\psi_{\clz, \clw} = 0$. Suppose
\[
\psi_{\clz, \clw} = 0.
\]
Then
\[
\sum_{j = 1}^m c_j K_n(\cdot, z_j) = 0.
\]
Since the set $\{K_n(\cdot, z_j): j = 1, \ldots, m\}$ is linearly independent, we have $c_j = 0$ for $j = 1, \ldots, m$. By using
\[
\begin{bmatrix}
c_1
\\
\vdots
\\
c_m
\end{bmatrix}
=
\begin{bmatrix}
\Sk(z_i, z_j)
\end{bmatrix}_{m \times m}^{-1}
\begin{bmatrix}
w_1
\\
\vdots\\
w_m
\end{bmatrix},
\]
we conclude that
\[
\begin{bmatrix}
w_1
\\
\vdots
\\
w_m
\end{bmatrix}
= 0,
\]
that is, $\clw = \{0\}$. This proves the following:
\[
\psi_{\clz, \clw} = 0 \Longleftrightarrow \clw = \{0\}.
\]
On the other hand, if $\clw = \{0\}$, then the interpolation problem corresponding to the data $\clz = \{z_1, \dots ,z_m\} \subset \B^n$ and $\clw$ is always solvable, with the zero function serving as a solution. Therefore, without loss of generality, we shall henceforth assume that the interpolation data $\clz$ and $\clw$ satisfy
\[
\psi_{\clz, \clw} \neq 0,
\]
equivalently, that
\[
\clw \neq \{0\}.
\]

So far, we have constructed two different kinds of functions, namely $\psi$ and $\psi_{\clz, \clw}$, from two distinct perspectives. However, they are the same:

\begin{proposition}\label{prop: basic Z and W}
Let $\clz = \{z_i\}_{i=1}^m \subseteq \B^n$ and $\clw = \{w_i\}_{i=1}^m \subseteq \D$ be interpolation data. Consider the module map $X_{\clz, \clw}$ as defined by \eqref{eqn: X* zw}. Then
\[
\psi = \psi_{\clz, \clw},
\]
where $\psi = X_{\clz, \clw} (P_{\clq_{\clz}} 1)$ and $\psi_{\clz, \clw}$ is the function defined in \eqref{eqn: psi Z W}.
\end{proposition}
\begin{proof}
Note that since $\psi \in \clq_\clz$, there exist scalars $\alpha_1, \dots \alpha_m$ such that
\[
\psi = \sum_{i = 1}^m \alpha_i \Sk(\cdot, z_i).
\]
For each $j=1, \ldots, m$, we have
\[
\begin{split}
\psi (z_j) & = \la \psi, \Sk(\cdot, z_j) \ra
\\
& = \la X_{\clz, \clw} (P_{\clq_{\clz}} 1), \Sk(\cdot, z_j) \ra
\\
& = \la P_{\clq_{\clz}} 1, X_{\clz, \clw}^* \Sk(\cdot, z_j) \ra
\\
& = \la P_{\clq_{\clz}} 1, \overline{w_j} \Sk(\cdot, z_j) \ra
\\
& = w_j \la 1,P_{\clq_{\clz}} \Sk(\cdot, z_j) \ra
\\
& = w_j \la 1, \Sk(\cdot, z_j) \ra
\\
& = w_j \Sk(z_j, 0)
\\
& = w_j,
\end{split}
\]
that is, $w_j = \psi(z_j) $, which implies
\[
w_j = \sum_{i = 1}^m \alpha_i \Sk(z_j, z_i).
\]
In other words, we have
\[
\begin{bmatrix}
\alpha_1
\\
\vdots
\\
\alpha_m
\end{bmatrix}
=
\begin{bmatrix}
\Sk(z_i, z_j)
\end{bmatrix}_{m \times m}^{-1}
\begin{bmatrix}
w_1
\\
\vdots\\
w_m
\end{bmatrix},
\]
and consequently, by comparing with the identity preceding \eqref{eqn: basic int fn}, we conclude that $c_j = \alpha_j$. Therefore, we obtain the desired identity: $\psi_{\clz, \clw} = \psi$.
\end{proof}

We are now in a position to solve the interpolation problem. The arguments developed in the proof will also be used in the results that follow.

\begin{theorem}\label{thm: NP interpolation contr}
Let $\clz = \{z_i\}_{i = 1}^m \subseteq \B^n$ and $\clw = \{w_i\}_{i = 1}^m \subseteq \D$ be interpolation data. Define
\[
\clm_{\clq_{\clz}} = \clq_{\clz}^{conj} \dot+ [\clm(\Sn) \dot+ H^2_0(\Sn)],
\]
and
\[
\psi_{\clz, \clw} = \sum_{j = 1}^m c_j \Sk(\cdot, z_j),
\]
where the scalars $\{c_1, \ldots, c_m\}$ are given by
\[
\begin{bmatrix}
c_1
\\
\vdots
\\
c_m
\end{bmatrix}
=
\begin{bmatrix}
\Sk(z_i, z_j)
\end{bmatrix}_{m \times m}^{-1}
\begin{bmatrix}
w_1
\\
\vdots\\
w_m
\end{bmatrix}.
\]
Then $\clz$ and $\clw$ solve the interpolation problem if and only if
\[
\Pi_{\clz, \clw} f = \int_{\Sn} \psi_{\clz, \clw} f d\sigma \qquad (f \in \clm_{\clq_{\clz}}),
\]
defines a contraction $\Pi_{\clz, \clw} : (\clm_{\clq_{\clz}},\|\cdot\|_1) \raro \C$.
\end{theorem}
\begin{proof}
Given the data $\clz$ and $\clw$, recall the construction of the module map $X_{\clz, \clw}$ on $\clq_\clz$ as in \eqref{eqn: X* zw}:
\[
X_{\clz, \clw}^* \Sk(\cdot, z_i) = \overline{w_i} \Sk(\cdot, z_i),
\]
for all $i=1, \ldots, m$. We claim that a Schur function $\vp \in \cls(\B^n)$ interpolates $\clz$ and $\clw$ if and only if
\begin{equation}\label{eqn: S vp = X ZW}
S_\vp = X_{\clz, \clw}.
\end{equation}
To see this, first observe that $S_\vp^* = T_\vp^*|_{\clq}$. Also, by the reproducing property, we have
\[
T_\vp^* \Sk(\cdot, z_i) = \overline{\vp(z_i)} \Sk(\cdot, z_i),
\]
for all $i=1, \ldots, m$. Since  $\clq_\clz = \text{span} \{\Sk(\cdot, z_i): i=1, \ldots, m\}$, it then follows that
\[
S_\vp^* \Sk(\cdot, z_i) = \overline{\vp(z_i)} \Sk(\cdot, z_i),
\]
for all $i=1, \ldots, m$. Now, saying that $\vp$ interpolates $\clz$ and $\clw$ amounts to $\vp(z_i) = w_i$ for all $i=1, \ldots, m$. Hence $S_\vp=X_{\clz,\clw}$, proving \eqref{eqn: S vp = X ZW}. This yields the following interpretation: the data $\clz$ and $\clw$ are solved by an interpolant $\vp \in \cls(\B^n)$ if and only if $X_{\clz, \clw} \in \clb(\clq_{\clz})$ admits a lift, namely $X_{\clz, \clw} = S_\vp$. By Theorem \ref{contr lift}, we know that the operator $X_{\clz, \clw}$ on $\clq_{\clz}$ admits a lift if and only if the functional $X_{\clq_{\clz}}$ is a contraction on $(\clm_{\clq_{\clz}}, \|\cdot \|_1)$, where
\[
X_{\clq_{\clz}} f = \int_{\Sn} \psi f d\sigma.
\]
for all $f \in \clm_{\clq_{\clz}}$, and
\[
\psi = X_{\clz, \clw} (P_{\clq_{\clz}} 1).
\]
The result now follows from the fact that $\psi_{\clz, \clw} = \psi$, as observed in Proposition \ref{prop: basic Z and W}.
\end{proof}

For future reference, we record that, in the latter part of the proof of the above theorem, we actually proved that
\begin{equation}\label{eqn: X = Pi}
X_{\clq_{\clz}} = \Pi_{\clz, \clw},
\end{equation}
on $\clm_{\clq_{\clz}}$.

\begin{remark}
An inspection reveals that Theorem \ref{thm: NP interpolation contr} also holds, with the same proof, even without assuming $\clw \subset \D$.
\end{remark}

We shall frequently appeal to the observation in \eqref{eqn: S vp = X ZW} in the arguments that follow. By Proposition \ref{prop: basic Z and W}, we know that $\psi_{\clz, \clw} = \psi = X_{\clz, \clw} (P_{\clq_{\clz}} 1)$. This, together with Theorem \ref{lift dis} yields the following:

\begin{corollary}\label{thm: NP interpolation distance}
In the setting of Theorem \ref{thm: NP interpolation contr}, define
\[
\widetilde{\clm_{\clq_{\clz}}} = [\clq_{\clz}^{conj} \ominus \{\overline{\psi_{\clz, \clw}}\}] \dot+[\clm(\Sn) \dot+ H^2_0(\Sn)].
\]
Then $\clz$ and $\clw$ solve the interpolation problem if and only if
\[
\text{dist}_{L^1(\Sn)} \Big(\frac{\overline{\psi_{\clz, \clw}}}{\|\psi_{\clz, \clw}\|_2^2}, \widetilde{\clm_{\clq_{\clz}}}\Big) \geq 1.
\]
\end{corollary}

The above distance formula is reminiscent of the Nehari theorem \cite{Nehari}, although it arises in a different context, namely that of Hankel operators. Since relatively little is known about Hankel operators on $\B^n$, it may be worthwhile to explore whether these results can be connected to the classical theory of Hankel operators, should such a connection exist (see \cite{Arazy, Seip} for results on Hankel operators in several variables.)

A more refined analysis of the distance formula provides yet another characterization of interpolation. In the following, given a collection $\clw = \{w_i\}_{i=1}^m \subseteq \D$, we write $w = (w_1, \ldots, w_m) \in \mathbb{C}^m$, and use the standard notation:
\[
\|w\| = (\sum_{i=1}^{m}|w_i|^2)^{\frac{1}{2}}.
\]
Clearly $\clw \neq \{0\}$ if and only if $\|w\| \neq 0$.

\begin{corollary}\label{cor: interpol 2nd distance}
In the setting of Theorem \ref{thm: NP interpolation contr}, assume that $\|w\| \neq 0$, and define
\[
\eta_{\mathcal Z,\mathcal W} := \sum_{j=1}^m w_j \Sk(\cdot, {z_j}).
\]
Then $\clz$ and $\clw$ solve the interpolation problem if and only if
\[
\text{dist}_{L^1(\Sn)} \left(\frac{\overline{\eta_{\mathcal Z,\mathcal W}}}{\|w\|^2}, \widetilde{\clm_{\clq_{\clz}}} \right) \geq 1.
\]
\end{corollary}
\begin{proof}
We recall the reproducing property that $\langle \psi_{\clz, \clw}, \Sk(\cdot, {z_i})\rangle = \psi_{\clz, \clw}(z_i)$ for all $i = 1, \ldots, m$, and compute
\[
\begin{split}
\Big\langle \psi_{\clz, \clw} - \frac{\|\psi_{\clz, \clw}\|^2}{\|w\|^2} \eta_{\clz, \clw}, \psi_{\clz, \clw} \Big\rangle & = \|\psi_{\clz, \clw}\|^2 - \Big\langle \frac{\|\psi_{\clz, \clw}\|^2}{\|w\|^2} \eta_{\clz, \clw}, \psi_{\clz, \clw} \Big\rangle
\\
& = \|\psi_{\clz, \clw}\|^2 - \frac{\|\psi_{\clz, \clw}\|^2}{\|w\|^2} \Big \langle \sum_{j=1}^m w_j \Sk(\cdot, {z_j}), \psi_{\clz, \clw} \Big\rangle
\\
& = \|\psi_{\clz, \clw}\|^2 - \frac{\|\psi_{\clz, \clw}\|^2}{\|w\|^2}  \sum_{j=1}^m w_j \Big \langle \Sk(\cdot, {z_j}), \psi_{\clz, \clw} \Big\rangle
\\
& = \|\psi_{\clz, \clw}\|^2 - \frac{\|\psi_{\clz, \clw}\|^2}{\|w\|^2} \sum_{j=1}^m w_j \overline{\psi_{\clz, \clw}(z_j)}
\\
& = \|\psi_{\clz, \clw}\|^2 - \frac{\|\psi_{\clz, \clw}\|^2}{\|w\|^2} \sum_{j=1}^m |w_j|^2
\\
& = 0,
\end{split}
\]
where the penultimate equality follows from the fact that $\psi_{\mathcal Z,\mathcal W}(z_i)=w_i$ for all $i=1,\ldots,m$. This implies
\[
\overline{\psi_{\clz, \clw}} - \frac{\|\psi_{\clz, \clw}\|^2}{\|w\|^2} \overline{\eta_{\clz, \clw}} \in \clq_{\clz}^{conj} \ominus \{\overline{\psi_{\clz, \clw}}\}.
\]
At this point, we recall a basic property of the distance function in normed linear spaces. If $S$ is a closed subspace of a normed linear space $V$, then
\[
\text{dist}(x, S) = \|[x]\|_{V/S} = \inf \{\|x-y\|: y \in S\},
\]
where $[x]$ denotes the equivalence class of $x$ in the quotient space $V/S$, corresponding to the equivalence relation $s \sim t$ if and only if $s - t \in S$. From this perspective, the above observation suggests that
\[
\Big\|\Big[\overline{\psi_{\clz, \clw}}\Big]\Big\|_{L^1(\Sn)/\widetilde{\clm_{\clq_{\clz}}}} = \Big\|\Big[\frac{\|\psi_{\clz, \clw}\|^2}{\|w\|^2} \overline{\eta_{\clz, \clw}}\Big]\Big\|_{L^1(\Sn)/\widetilde{\clm_{\clq_{\clz}}}},
\]
and hence
\[
\text{dist}_{L^1(\Sn)} \left(\frac{\overline{\eta_{\mathcal Z,\mathcal W}}}{\|w\|^2}, \widetilde{\clm_{\clq_{\clz}}} \right) = \text{dist}_{L^1(\Sn)} \left(\frac{\overline{\psi_{\clz,\clw}}}{\|\psi_{\clz, \clw}\|_2^2}, \widetilde{\clm_{\clq_{\clz}}} \right).
\]
The result now follows from Corollary \ref{thm: NP interpolation distance}.
\end{proof}

We conclude this section by establishing a connection between interpolation theory and the class of inner functions $\cli(\B^n)$. Recall the Aleksandrov-Rudin approximation result in Theorem \ref{thm: intro Alek Rud}: Given $\vp \in \cls(\B^n)$, there exists a sequence $\{\vp_k\}_k \subseteq \cli(\B^n)$ such that
\[
\lim_{k \raro \infty} \vp_k = \vp,
\]
uniformly on compact subsets of $\B^n$. The same theorem also shows that this convergence holds in the weak$^*$-topology.

\begin{theorem}\label{thm: NP interpolation inner}
Let $\clz = \{z_i\}_{i = 1}^m \subseteq \B^n$ and $\clw = \{w_i\}_{i = 1}^m \subseteq \D$ be interpolation data. Then $\clz$ and $\clw$ solve the interpolation problem if and only if there exists a sequence of inner functions $\{\vp_k\}_k \subseteq \cli(\B^n)$ such that
\[
\lim_{k \raro \infty} \vp_k(z_j) = w_j,
\]
for all $j=1, \ldots, m$.
\end{theorem}
\begin{proof}
Suppose there is a function $\varphi \in \cls(\B^n)$ such that $\varphi(z_j) = w_j$ for all $j=1, \ldots, m$. By Theorem \ref{thm: intro Alek Rud}, there exists a sequence of inner functions $\{\vp_k\}_k$ such that
\[
\lim_{k \raro \infty} \vp_k = \vp,
\]
uniformly on compact subsets of $\B^n$. Since $\clz$ is a finite set and $\vp$ interpolates $\clz$ and $\clw$, the necessary part of the theorem follows. For the sufficient part, suppose there exists a sequence of inner functions $\{\vp_k\}_k$ such that
\[
\lim_{k \raro \infty} \vp_k (z_j) = w_j,
\]
for all $j=1, \ldots, m$. Viewing $\{\vp_k\}_k$ as a family of functions on $\B^n$, we observe that it is uniformly bounded, since $\|\vp_k\|_\infty=1$ for all $k$. Therefore, by Montel's theorem, there exist a function $\vp$ and a subsequence ${\vp_{k_\ell}}$ such that
\[
\lim_{l \raro \infty} \vp_{k_\ell} = \vp,
\]
uniformly on compact subsets of $\B^n$. As in the proof of Theorem $\ref{thm: extr sol}$, it follows that $\vp \in \cls(\B^n)$. Finally, for each $j = 1, \ldots, m$,
\[
\vp(z_j) = \lim_{l \raro \infty} \vp_{k_\ell}(z_j) = w_j.
\]
Therefore, $\vp \in \cls(\B^n)$ interpolates $\clz$ and $\clw$. After relabeling $\vp_{k_\ell}$ as $\vp_{\ell}$, the proof is complete.
\end{proof}

Another way to state the above result is as follows: $\clz$ and $\clw$ solve the interpolation problem if and only if there exists a sequence of inner functions $\{\vp_k\}_k \subseteq \cls(\B^n)$ such that
\[
\lim_{k \raro \infty} \vp_k(z_j) = \psi_{\clz, \clw}(z_j),
\]
for all $j=1, \ldots, m$.

Recall that, in one variable, by Pick's theorem (see Theorem \ref{thm: Intro Pick}), whenever an interpolation problem is solvable, there exists a finite Blaschke product that serves as an interpolant. Consequently, the above result is immediate when $n=1$. Clearly, for $n>1$, this theorem can be viewed as a higher-dimensional analogue of this classical fact, where the role of a finite Blaschke product is replaced by a sequence of inner functions.

We remark that, much like the proof of our commutant lifting theorem, where we invoked the Hahn–Banach theorem, the proof of the above inner function-based interpolation result also relies on another classical result, namely, Montel's theorem.

\section{The Carath\'{e}odory-Fej\'{e}r interpolation}\label{sec: Carath interp}

Recall the classical Carath\'{e}odory-Fej\'{e}r interpolation problem: Given scalars $c_0, c_1, \ldots, c_m$, find necessary and sufficient conditions for the existence of a Schur function $\vp \in \cls(\D)$ such that
\[
\frac{\vp^{(j)}(0)}{j!} = c_j,
\]
for all $j=0,1, \ldots, m$. The Carath\'{e}odory-Fej\'{e}r interpolation theorem states that such a Schur function exists if and only if the $(m+1) \times (m+1)$ matrix
\[
\begin{bmatrix}
c_0 & 0 & \cdots & 0
\\
c_1 & c_0 & \ddots & \vdots\\
\vdots & \ddots & \ddots & 0
\\
c_m & \cdots & c_1 & c_0
\end{bmatrix},
\]
is a contraction on $\mathbb{C}^{m+1}$. Although the problem was originally solved by Carath\'{e}odory \cite{Car 1, Car 2}, its matrix-theoretic formulation was developed by Toeplitz \cite{Toep}. Since then, the problem has been studied by numerous mathematicians in a variety of contexts, and the literature continues to grow. For instance, see Fejer \cite{Carat-Fej}, Fisher \cite{Fis}, Frobenius \cite{Frob}, Riesz \cite{R1, R2}, Schur \cite{Schur 1}, and Szeg\"{o} \cite{Szego}. We refer the reader to Sarason \cite{D Sarason} for a thorough historical account of the Carath\'{e}odory-Fej\'{e}r interpolation problem.

Similar to the interpolation problem, Sarason's lifting theorem approach also provides an alternative proof of this result. In this section, we apply the present lifting theorem to solve the same interpolation problem on $\B^n$. Specifically, we formulate the Carath\'{e}odory-Fej\'{e}r interpolation problem in the following way. The degree of a monomial $z^k$, $k = (k_1, \ldots, k_n) \in \Z_+^n$, is defined as
\[
\text{deg } z^k = |k| := \sum_{j=1}^{n} k_j.
\]
The degree of a polynomial is defined as the highest degree among its monomial terms.

\begin{problem}
Fix $m \geq 1$. Let $p \in \clq_m$, that is, $p \in \C[z_1, \dots, z_n]$ is a polynomial of degree less or equal to $m$. The Carath\'{e}odory-Fej\'{e}r interpolation problem seeks necessary and sufficient conditions for the existence of a Schur function $\varphi \in \cls(\B^n)$ such that
\[
p - \varphi \in \clq_m^\perp.
\]
\end{problem}

Given $m \in \mathbb{N}$, define the finite-dimensional quotient module $\clq_m$ as
\[
\clq_m = \{ p \in \C[z_1, \dots, z_n] : \text{deg } p \leq m\}.
\]

For a qualitative solution to this problem, we define a special subspace of $L^2(\Sn)$ as follows:
\[
\clm_m = \clq_m^{conj} \dot+ [\clm(\Sn) \dot+ H^2_0(\Sn)].
\]
As usual, we treat $\clm_m$ as a subspace of $L^1(\Sn)$.

\begin{theorem}\label{thm: Caratheodory contr}
Let $p \in \clq_m$. Define $X_p: \clm_m \raro \mathbb{C}$ by
\[
X_p f = \int_{\Sn} pf d\sigma,
\]
for all $f \in \clm_m$. Then there exists $\varphi \in \cls(\B^n)$ such that
\[
p - \varphi \in \clq_m^\perp,
\]
if and only if $X_p: (\clm_m, \|\cdot\|_1) \raro \C$ is a contraction.
\end{theorem}
\begin{proof}
Recall that $S_p = P_{\clq_m} T_p|_{\clq_m}$. Since $1 \in \clq_m$, we have
\[
\psi = S_p (P_{\clq_m} 1) = P_{\clq_m} T_p 1 = P_{\clq_m} p = p,
\]
and hence, by Theorem \ref{lift lemma}, $S_p$ admits a lift if and only if there exists a $\varphi \in \cls(\B^n)$ such that
\[
p - \varphi \in \clq_m^\perp.
\]
On the other hand, Theorem \ref{contr lift} implies that $S_p$ admits a lift if and only if $X_{\clq_m}: (\clm_{\clq_m}, \|\cdot\|_1) \raro \C$ is a contraction, where
\[
X_{\clq_m} f = \int_{\Sn} p f d\sigma,
\]
for all $f \in \clm_{\clq_m}$. Therefore, the result follows because $X_{\clq_m} = X_p$.
\end{proof}

Analogous to the solution of the interpolation theorem in Theorem \ref{thm: NP interpolation distance}, the Carath\'{e}odory-Fej\'{e}r interpolation theorem can also be formulated in terms of a distance formula, yielding a quantitative characterization of the solutions. The proof is similar to that of the quantitative characterization in Corollary \ref{thm: NP interpolation distance}, and hence we omit the details.

\begin{theorem}\label{thm: CF interpolation distance}
Let $p$ be a nonzero element in $\clq_m$. Define
\[
\widetilde{\clm_{p}} = [\clq_m^{conj} \ominus \{\bar{p}\}] \dot+ [\clm(\Sn) \dot+ H^2_0(\Sn)].
\]
Then there exists $\varphi \in \cls(\B^n)$ such that
\[
p- \vp \in \clq_m^\perp,
\]
if and only if
\[
\text{dist}_{L^1(\Sn)}\Big(\frac{\overline{p}}{\|p\|_2^2}, \widetilde{\clm_{p}}\Big) \geq 1.
\]
\end{theorem}

Now we turn to $\cla(\B^n)$. Recall that the class $\cla(\B^n)$ of approximate functions refers to either the class $\cli(\B^n)$ of inner functions or the class $\cle(\B^n)$ of extremal functions.

\begin{theorem}\label{thm: Caratheodory inner}
Let $p$ be a nonzero element in $\clq_m$. Then there exists $\varphi \in \cls(\B^n)$ such that
\[
p - \varphi \in \clq_m^\perp,
\]
if and only if there is a sequence of functions $\{\vp_k\}_k \subseteq \cla(\B^n)$ such that
\[
\lim_{k \raro \infty} \la \vp_k, z^l \ra = \la p , z^l \ra,
\]
for all $l \in \Z_{+}^n$ with $|l| \leq m$.
\end{theorem}
\begin{proof}
In the proof of Theorem \ref{thm: Caratheodory contr}, we have already noted that there exists a $\varphi \in \cls(\B^n)$ such that $p - \varphi \in \clq_m^\perp$ if and only if the module map $S_p \in \clb(\clq_m)$ admits a lift. On the other hand, by Corollary $\ref{Cor: lift inner q}$, $S_p$ admits a lift if and only if there exists a sequence $\{\vp_k\}_k \subseteq \cls(\B^n)$ such that
\be
w^* - \lim_{k \raro \infty} \vp_k = p \text{ on } \clq_m^{conj}.
\ee
In view of the fact that $\clq_m$ is spanned by the monomials $\{z^l : |l| \leq m\}$, this is equivalent to
\[
\lim_{k \raro \infty} \int_{\Sn} \vp_k \overline{z^l} d\sigma = \int_{\Sn} p \overline{z^l} d\sigma.
\]
that is, $\lim_{k \raro \infty}  \la \vp_k, z^l \ra = \la p , z^l\ra$ for all $l\in \Zp$ with $|k| \leq m$.
\end{proof}

To further elaborate this result, we recall that if $f$ on $\B^n$ is an analytic function, then $f$ admits a power series representation $f(z) = \sum_k a_k z^k$ on $\B^n$, where (note that the set of analytic monomials $\{z^k: k \in \Zp\}$ is an orthogonal set in $H^2(\B^n)$, see \cite[1.4.8]{Rud})
\[
a_k = \frac{\la f, z^k\ra}{\la z^k, z^k\ra},
\]
as well as
\[
a_k = \frac{1}{k!} \frac{\partial^k f}{\partial z^k}\Big|_0,
\]
for all $k \in \Z_+^n$.

Using the above observations, the Carath\'{e}odory-Fej\'{e}r interpolation theorem can be reformulated in terms of the coefficients in the power series expansion of the corresponding functions as follows:

\begin{corollary}\label{CF: Taylor}
Let $p \in \clq_m$. Then there exists $\varphi \in \cls(\B^n)$ such that
\[
p - \varphi \in \clq_m^\perp,
\]
if and only if there is a sequence of functions $\{\vp_k\}_k \subseteq \cla(\B^n)$ such that
\[
\lim_{k \raro \infty} \frac{\partial^{\alpha} \vp_k}{\partial z^{\alpha}}\Big|_0  = \frac{\partial^{\alpha} p}{\partial z^{\alpha}} \Big|_0,
\]
for all $\alpha \in \Z_{+}^n$ with $|\alpha| \leq m$.
\end{corollary}

As in the proof of Theorem \ref{thm: NP interpolation inner}, one may also prove Theorem \ref{thm: Caratheodory inner} using Montel's theorem.

\section{Norm-preserving lifting: $n>1$}\label{sec: norm pres lift}

In this section, we examine a phenomenon specific to the higher-dimensional case $n>1$ in the context of norm-preserving liftings. We begin by recalling the classical commutant lifting theorem of Sarason \cite{D Sarason}:

\begin{theorem}[Sarason. $n=1$]\label{thm: Sarason}
Let $\clq$ be a quotient module of $H^2(\D)$, and let $X \in \clb_1(\clq)$ be a module map. Then there exists $\vp \in \cls(\D)$ such that $X = S_\vp$ and
\[
\|X\| = \|\vp\|_\infty.
\]
\end{theorem}

In other words, norm-preserving lifting always holds in the $n=1$ case. Our approach to lifting in Theorem \ref{contr lift}, when restricted to $n=1$, recovers precisely Sarason’s result, as in the polydisc case treated in \cite[Theorem 9.2]{KD}. In this case, however, one obtains only the inequality
\[
\|X\| \leq \|\vp\|_\infty,
\]
and not necessarily the norm-preserving aspect of lifting.

In stark contrast to Sarason, we show in this section that such a norm-preserving lifting phenomenon always fails for finite-dimensional quotient modules of $H^2(\B^n)$ whenever $n>1$. First, we prove a lemma of general interest. We follow the notation introduced in Theorem \ref{contr lift}.

\begin{lemma}\label{lemma: norm X, Xq}
Let $n \geq 1$. Let $\clq \subseteq H^2(\B^n)$ be a quotient module and let $X \in \clb(\clq)$ be a module map. Define
\[
\psi = X(P_\clq 1),
\]
and define $X_\clq : (\clm_\clq, \|\cdot\|_1) \raro \C$ by
\[
X_\clq f = \int_{\Sn} \psi f d\sigma,
\]
for all $f \in \clm_\clq$. If $X_\clq$ is bounded, then
\[
\|X\| \leq \|X_{\clq}\|.
\]
\end{lemma}
\begin{proof}
First, suppose that $\|X\| = 1$. If $\|X_{\clq}\| < 1$, then Theorem $\ref{contr lift}$ implies that $X$ admits a lift, and (see \eqref{eqn: |theta| = XQ 1}) there exists a function $\theta \in \cls(\B^n)$ such that $X = S_{\theta}$ and
\[
\|\theta\|_{\infty} = \|X_{\clq}\| < 1.
\]
For $f \in \clm_\clq$, we have
\[
\|X f\|_2 = \|S_{\theta} f\|_2 = \|P_{\clq} \theta f\|_2 \leq \|\theta f\|_2 \leq \|\theta\|_{\infty} \|f\|_2,
\]
and hence $\|X\| \leq \|\theta\|_{\infty} <1$, which contradicts the assumption $\|X\| = 1$. This implies $\|X_{\clq}\| \geq 1$, that is, $\|X\| \leq \|X_{\clq}\|$.

\noindent For the general case, assume that $X$ is a nonzero module map, and define
\[
A = \frac{1}{\|X\|} X.
\]
Clearly, $A \in \clb(\clq)$ is a module map and $\|A\| = 1$. By the above case, we have
\[
1 = \|A\| \leq \|A_{\clq}\|.
\]
Now, for any $f \in \clm_{\clq}$,
\[
A_{\clq} f = \int_{\Sn} (A P_{\clq} 1) f d\sigma = \int_{\Sn} \frac{1}{\|X\|} X(P_{\clq} 1) f d\sigma = \frac{1}{\|X\|} X_{\clq} f,
\]
which implies
\[
A_{\clq} = \frac{1}{\|X\|} X_{\clq}.
\]
Since $\|A_{\clq}\| \geq 1$, it follows that
\[
\frac{\|X_{\clq}\|}{\|X\|} \geq 1,
\]
that is, $\|X\| \leq \|X_{\clq}\|$. This completes the proof of the lemma.
\end{proof}

In the context of the above lemma, note that if $\clq$ is finite-dimensional, then $X_\clq$ is automatically bounded. We also point out a property of finite-dimensional quotient modules: If $\clq \subseteq H^2(\B^n)$ is a finite-dimensional quotient module, then (cf. \cite[Corollary 3.4]{GZ} and  \cite[Chapter 2, Remark 2.5.5]{CG})
\begin{equation}\label{eqn: fd Q in A}
\clq \subset A(\B^n) \subset H^{\infty}(\B^n).
\end{equation}

Before proceeding to the next result, we recall an interesting result of Rudin that highlights a significant distinction between the cases $n=1$ and $n>1$ \cite[Theorem 4.6]{Rud inner}: Let $n>1$. If $f$ is a nonzero function in $A(\B^n)$ and $u \in H^\infty(\B^n)$ is a nonconstant inner function, then
\begin{equation}\label{eqn: Rudin Smirnov}
\frac{f}{u} \notin H^2(\B^n).
\end{equation}
In fact, more is true: $\frac{f}{u}$ is not even in the Smirnov class $N_*(\B^n)$. In the following, we bring together Rudin's result with our commutant lifting theorem and the above lemma:

\begin{theorem}\label{thm: norm 1 no lift}
Assume $n > 1$. Let $\clq \subseteq H^2(\B^n)$ be a finite-dimensional quotient module and let $X \in \clb(\clq)$ be a module map with $\|X\| = 1$. Then $X$ admits a lift if and only if there exists a unimodular constant $\alpha$ such that
\[
X = \alpha I_\clq.
\]
\end{theorem}
\begin{proof}
As in Lemma \ref{lemma: norm X, Xq}, define $X_\clq : (\clm_\clq, \|\cdot\|_1) \raro \C$ by
\[
X_\clq f = \int_{\Sn} (X (P_\clq 1)) f d\sigma,
\]
for all $f \in \clm_\clq$. Since $\clq$ is finite-dimensional, we know that $X_\clq$ is continuous. Moreover, as $\|X\| = 1$, Lemma \ref{lemma: norm X, Xq} implies
\[
\|X_{\clq}\| \geq 1.
\]
Suppose $X$ admits a lift. Theorem $\ref{contr lift}$ ensures that $\|X_{\clq}\| \leq 1$ and thus we must have
\[
\|X_{\clq}\| = 1.
\]
By the sufficient part of the proof of Theorem \ref{contr lift}, and more specifically, by \eqref{eqn: |theta| = XQ 1}, there exists $\theta \in \cls(\B^n)$ such that $\|\theta\|_{\infty} = 1$ and
\[
X = S_{\theta}.
\]
Given that $\clq$ is finite-dimensional, we conclude that $X \in \clb(\clq)$ is norm-attaining: there exists $f \in \clq$, $\|f\|_2 = 1$, such that $\|Xf\|_2 = 1$. Hence
\[
1 = \|Xf\|_2 = \|S_{\theta} f\|_2 = \|P_{\clq} \theta f\|_2 \leq \|\theta f\|_2 \leq 1.
\]
Therefore, the above chain of inequalities reduces to equalities. We conclude that
\[
P_{\clq} \theta f = \theta f,
\]
and, in particular,
\[
\theta f \in \clq.
\]
Consequently,
\[
Xf = S_\theta f = P_\clq(\theta f) = \theta f.
\]
We claim that $\theta$ is a constant. We proceed as follows. Since
\[
1 = \|Xf\|_2 = \|\theta f\|_2 = \|f\|_2,
\]
it follows that
\[
1 = \int_{\Sn} |f|^2 d\sigma = \int_{\Sn} |\theta f|^2 d\sigma,
\]
that is,
\[
\int_{\Sn} (1 - |\theta|^2)|f|^2 d\sigma = 0.
\]
Since $\theta \in \cls(\B^n)$, the integrand is nonnegative. Hence,
\[
(1 - |\theta|^2)|f|^2 = 0,
\]
on $\Sn$ a.e. Define
\[
E_1 = \{\zeta \in \Sn : ((1 - |\theta|^2)|f|^2)(\zeta) \neq 0\}.
\]
Clearly,
\[
\sigma(E_1) = 0.
\]
Define
\[
E_2 = \{\zeta \in \Sn : f(\zeta) = 0\}.
\]
Given that $f$ is a nonzero analytic function in $\clq \subseteq H^2(\B^n)$, we have (in view of the $K$-limits, cf. \cite[Theorem 5.5.9]{Rud})
\[
\sigma(E_2) = 0.
\]
Let $E = E_1 \cup E_2$. Then
\[
\sigma(E) = 0.
\]
For any $\zeta \in E^{c}$, we have $f(\zeta) \neq 0$ and $((1 - |\theta|^2)|f|^2)(\zeta) = 0$, which implies
\[
|\theta|(\zeta) = 1,
\]
for $\zeta \in E^{c}$. Since $\sigma(E) = 0$ as well, we conclude that $\theta$ is inner. Since $\clq \subseteq H^2(\B^n)$ is finite-dimensional, \eqref{eqn: fd Q in A} implies
\[
\clq \subset A(\B^n) \subset H^{\infty}(\B^n).
\]
As noted earlier,
\[
\theta f = Xf \in \clq \subseteq A(\B^n)
\]
and hence $\theta f \in A(\B^n)$. On the other hand, writing $f = \frac{\theta f}{\theta}$, we conclude
\[
\frac{\theta f}{\theta} = f \in H^2(\B^n).
\]
If $\theta$ were nonconstant, this would contradict Rudin \cite[Theorem 4.6]{Rud inner} (or see the discussion preceding the statement of this theorem). Hence $\theta \equiv \alpha$ for some $\alpha \in \mathbb{T}$. Since $X = S_{\theta}$, it follows that $X = \alpha I_\clq$. The converse is immediate.
\end{proof}

As a result, for $n > 1$, we obtain the following result, which stands in sharp contrast to Sarason's conclusion in the one-variable case $n=1$:

\begin{corollary}\label{cor: norm preserving}
Assume $n > 1$. Let $X$ be a nonconstant module map acting on a finite-dimensional quotient module $\clq \subseteq H^2(\B^n)$. Then $X$ does not admit a norm-preserving lift.
\end{corollary}
\begin{proof}
Let $A= \frac{X}{\|X\|}$. Then $A \in \clb(\clq)$ is a module map with $\|A\| = 1$. Suppose $X$ has a norm-preserving lift, say $X = S_{\theta}$, $\theta \in \cls(\B^n)$, with $\|\theta\|_{\infty} = \|X\|$. Define
\[
\varphi = \frac{1}{\|X\|}\theta.
\]
Clearly, $\vp \in \cls(\B^n)$ is a nonconstant function and $\|\vp\|_\infty = 1$. For $f \in \clq$, we have
\[
Af = \frac{1}{\|X\|} P_{\clq} \theta f = P_{\clq} \varphi f.
\]
This shows that $A$ admits a lift, namely, $S_\vp$ - which contradicts Theorem \ref{thm: norm 1 no lift}.
\end{proof}

However, quiet interestingly, module maps on infinite-dimensional quotient modules may admit norm-preserving lifts, as we will see in Theorem \ref{lifiting inner multiple}.

We now assume that $n \geq 1$. The following result stands in contrast to the previous corollary:

\begin{theorem}\label{lemma: Xq X norm}
Let $\clq \subseteq H^2(\B^n)$ be a quotient module, and let $X \in \clb_1(\clq)$ be a module map. Then $X$ admits a norm preserving lift if and only if
\[
\|X\| = \|X_{\clq}\|.
\]
\end{theorem}
\begin{proof}
Suppose that $X$ admits a norm-preserving lift:
\[
X = S_{\varphi} \text{ with }\|X\| = \|\varphi\|_{\infty},
\]
for some $\varphi \in \cls(\B^n)$. By Theorem \ref{contr lift} we know that $X_{\clq}$ is bounded. Then, by Theorem \ref{thm: norm XQ}, we have
\[
\|X_\clq\| \leq \|\vp\|_\infty = \|X\|.
\]
Together with Lemma \ref{lemma: norm X, Xq}, this implies that $\|X\| = \|X_{\clq}\|$. Conversely, suppose that $\|X\| = \|X_{\clq}\|$. Since $\|X\| \leq 1$, we have $\|X_{\clq}\| \leq 1$. By Theorems \ref{contr lift} and \ref{thm: norm XQ}, we conclude that $X$ admits a lift:
\[
X = S_{\varphi} \text{ with }\|X_{\clq}\| = \|\varphi\|_{\infty},
\]
for some $\varphi \in \cls(\B^n)$. Since $\|X\| = \|X_{\clq}\|$, it follows that $X$ admits a norm-preserving lift.
\end{proof}

Combining this with Corollary \ref{cor: norm preserving}, we have the following:

\begin{corollary}[On $n > 1$]\label{lemma: Xq less X}
Assume $n > 1$. Let $\clq$ be a finite-dimensional quotient module of $H^2(\B^n)$ and let $X \in \clb_1(\clq)$ be a nonconstant module map. Then
\[
\|X\| < \|X_{\clq}\|.
\]
\end{corollary}
\begin{proof}
If $\|X\| = \|X_{\clq}\|$, then by Theorem \ref{lemma: Xq X norm}, $X$ admits a norm preserving lift. This contradicts Corollary \ref{cor: norm preserving}, which says that $X$ does not admit a norm-preserving lift. Therefore, by Lemma \ref{lemma: norm X, Xq}, we conclude that $\|X\| < \|X_{\clq}\|$.
\end{proof}

However, somewhat curiously, in the case $n=1$, the above inequality becomes an equality:

\begin{corollary}[On $n = 1$]\label{remark: single variable xq x}
Let $\clq$ be a quotient module of $H^2(\D)$, and let $X \in \clb_1(\clq)$ be a nonconstant module map.  Then
\[
\|X\| = \|X_{\clq}\|.
\]
\end{corollary}
\begin{proof}
Define
\[
A = \frac{1}{\|X\|} X
\]
Then $A \in \clb(\clq)$ is a module map with $\|A\| = 1$. Suppose $A = S_{\theta}$ for some $\theta \in \cls(\D)$. For each $f \in \clq$, we have
\[
\|Af\|_2 = \|S_{\theta}f\|_2 = \|P_{\clq} \theta f\|_2 \leq \|\theta f\|_2 \leq \|\theta\|_{\infty} \|f\|_2,
\]
that is, $\|A\| \leq \|\theta\|_{\infty}$. Therefore,
\[
\|A\| \leq \inf \{\|\theta\|_{\infty} : A = S_{\theta}, \theta \in \cls(\D)\}.
\]
On the other hand, by Sarason, Theorem \ref{thm: Sarason}, there exists $\varphi \in \cls(\D)$ such that
\[
A = S_{\varphi}, \text{ and } \|\varphi\|_{\infty} = \|A\| = 1,
\]
and hence
\[
\|A\| = \inf \{\|\theta\|_{\infty} : A = S_{\theta}, \theta \in \cls(\D)\}.
\]
Moreover, by the norm formula in Theorem \ref{thm: norm XQ},
\[
\|A_{\clq}\| = \inf \{\|\theta\|_{\infty} : A = S_{\theta}, \theta \in \cls(\D)\},
\]
where $A_\clq: (\clm_\clq, \|\cdot\|_1) \raro \mathbb{C}$ given by
\[
A_\clq f = \int_{\mathbb{T}} f \psi d\,\sigma,
\]
for all $f \in \clm_\clq$ with $\psi = A (P_\clq 1)$. Therefore
\[
\|A_{\clq}\| = \|A\| = 1.
\]
Since $A (P_{\clq} 1) = \frac{1}{\|X\|} X (P_{\clq}1)$, it follows that $A_{\clq} = \frac{1}{\|X\|} X_{\clq}$, and hence $\|X\| = \|X_{\clq}\|$.
\end{proof}

Summarizing the preceding two results, we obtain the following dichotomy between the cases $n=1$ and $n> 1$: Let $\clq$ be a finite-dimensional quotient module of $H^2(\B^n)$, and let $X \in \clb_1(\clq)$ be a nonconstant module map. Then
\[
\|X\|  = \|X_{\clq}\|  \mbox{ if } n=1,
\]
and
\[
\|X\|  < \|X_{\clq}\| \mbox{ if } n > 1.
\]

\newsection{Extremal problem and its solution}\label{Sec: ext inter}

This section provides characterizations of the extremal problem. We also present a useful construction of interpolation data that solve the extremal problem. This construction ultimately yields a factorization of a solution to the interpolation problem as the product of an extremal function and the minimum-norm scalar factor among all solutions. See Section \ref{sec: Carath approx} for the definition and other basic details regarding the extremal problem. Since the extremal problem is the main focus of this section, we shall assume throughout that the number of interpolation nodes is greater than one; that is,
\[
m>1.
\]

Recall that, when $n=1$, the interpolation data $\clz = \{z_i\}_{i=1}^m \subseteq \D$ and $\clw = \{w_i\}_{i=1}^m \subseteq \D$ solve the extremal interpolation problem if and only if the associated Pick matrix is positive semi-definite and of low rank \cite[Chapter 6]{AM}. The latter condition is equivalent to
\[
\det \begin{bmatrix}\frac{1 - w_i \overline{w_j}}{1 - z_i \overline{z_j}}\end{bmatrix}_{m \times m} = 0.
\]
Moreover, in this case, the extremal solution is unique and is a finite Blaschke product \cite[Theorem 6.4]{AM}. In the case of $\B^n$, we give concrete characterizations of this problem using the quantitative and qualitative criteria studied earlier.

Fix interpolation data $\clz = \{z_i\}_{i=1}^m \subseteq \B^n$ and $\clw = \{w_i\}_{i=1}^m \subseteq \D$. As in Section \ref{sec: interpolation}, or more specifically in Theorem \ref{thm: NP interpolation contr}, we the define quotient module
\[
\clq_{\clz} = \text{span}\{\Sk(\cdot, z_i) : i=1, \ldots, m\},
\]
and the space
\[
\clm_{\clq_{\clz}} = \clq_{\clz}^{conj} \dot+ [\clm(\Sn) \dot+ H^2_0(\Sn)].
\]
Recall from \eqref{eqn: X* zw} that $X_{\clz, \clw} \in \clb(\clq_\clz)$ is a module map satisfying
\[
X^*_{\clz, \clw} \Sk(\cdot, z_j) = \overline{{w}_j } \Sk(\cdot, z_j),
\]
for all $j=1, \ldots, m$. Moreover, we have the functional $\Pi_{\clz, \clw}: (\clm_{\clq_{\clz}}, \|\cdot\|_1) \raro \C$ defined by
\[
\Pi_{\clz, \clw} f = \int_{\Sn} \psi_{\clz, \clw} f d\sigma,
\]
for all $f \in \clm_{\clq_{\clz}}$, where (see Theorem \ref{thm: NP interpolation contr})
\[
\psi_{\clz, \clw} = \sum_{j = 1}^m c_j \Sk(\cdot, z_j),
\]
with
\[
\begin{bmatrix}
c_1
\\
\vdots
\\
c_m
\end{bmatrix}
=
\begin{bmatrix}
\Sk(z_i, z_j)
\end{bmatrix}_{m \times m}^{-1}
\begin{bmatrix}
w_1
\\
\vdots\\
w_m
\end{bmatrix}.
\]
Finally, recall from Corollary \ref{thm: NP interpolation distance} that
\[
\widetilde{\clm_{\clq_\clz}} = \ker \Pi_{\clz, \clw} = [\clq_{\clz}^{conj} \ominus \{\overline{\psi_{\clz, \clw}}\}] \dot+ [\clm(\Sn) \dot+ H^2_0(\Sn)].
\]

\begin{theorem}\label{thm: extremal}
Let $\clz = \{z_i\}_{i=1}^m \subseteq \B^n$ and $\clw = \{w_i\}_{i=1}^m \subseteq \D$ be interpolation data. Then the following are equivalent:
\begin{enumerate}
\item $\clz$ and $\clw$ solve the extremal problem.
\item $\|\Pi_{\clz, \clw}\| = 1$.
\item $\text{dist}_{L^1(\Sn)}\Big(\frac{\overline{\psi_{\clz, \clw}}}{\|\psi_{\clz, \clw}\|_2^2}, \widetilde{\clm_{\clq_{\clz}}}\Big) = 1$.
\item $\text{dist}_{L^1(\Sn)} \left(\frac{1}{\sum_{i=1}^{m}|w_i|^2} \displaystyle \sum_{j=1}^m \overline{w_j} \overline{\Sk(\cdot, {z_j})}, \widetilde{\clm_{\clq_{\clz}}} \right) = 1$.
\end{enumerate}
\end{theorem}
\begin{proof}
First, we prove that (1) implies (2). Suppose $\clz$ and $\clw$ solve the extremal problem. This means, in particular, that there exists an interpolant. By Theorem \ref{thm: NP interpolation contr}, we have
\[
\|\Pi_{\clz, \clw}\| \leq 1.
\]
Assume, for contradiction, that $\|\Pi_{\clz, \clw}\| < 1$. Then, by Theorem \ref{contr lift} and Theorem \ref{thm: norm XQ}, there exists $\theta \in \cls(\B^n)$ such that $X_{\clz, \clw} = S_\theta$ and
\[
\|\theta\|_{\infty} = \|\Pi_{\clz, \clw}\| < 1.
\]
Since $X_{\clz, \clw} = S_\theta$, by \eqref{eqn: S vp = X ZW}, we know that $\theta$ interpolates $\clz$ and $\clw$. However, since the interpolation problem is extremal, this contradicts the definition. Therefore, we must have $\|\Pi_{\clz, \clw}\| = 1$.

\noindent To prove that (2) implies (1), assume that $\|\Pi_{\clz, \clw}\| = 1$. Since $\Pi_{\clz, \clw}$ is a contraction, by Theorem \ref{contr lift}, Theorem \ref{thm: norm XQ}, and Theorem \ref{thm: NP interpolation contr}, the interpolation problem has a solution $\theta \in \cls(\B^n)$ with $\|\theta\|_{\infty} = 1$. Now, suppose $\varphi \in \cls(\B^n)$ interpolates $\clz$ and $\clw$ and satisfies
\[
\|\varphi\|_{\infty} < 1.
\]
By \eqref{eqn: S vp = X ZW}, we have $X_{\clz, \clw} = S_\vp$. Now, in view of Theorem \ref{thm: norm XQ}, we have
\[
\|\Pi_{\clz, \clw}\| = \inf \{\|\theta\|: \theta \in \cls(\B^n), S_\theta = X_{\clz, \clw}\}.
\]
Thus,
\[
\|\Pi_{\clz, \clw}\| \leq \|\vp\|_\infty < 1,
\]
which contradicts the assumption that $\|\Pi_{\clz, \clw}\| = 1$. Therefore, the interpolation problem is extremal. The equivalence of (2) and (3) follows from Lemma \ref{dis norm} in exactly the same way as in the proof of Theorem \ref{lift dis}. For the equivalence of (3) and (4), it suffices to recall from the proof of Theorem \ref{cor: interpol 2nd distance}, specifically, the final identity in the proof, that
\[
\text{dist}_{L^1(\Sn)} \left(\frac{1}{\sum_{i=1}^{m}|w_i|^2} \displaystyle \sum_{j=1}^m \overline{w_j} \overline{\Sk(\cdot, {z_j})}, \widetilde{\clm_{\clq_{\clz}}} \right) = \text{dist}_{L^1(\Sn)}\Big(\frac{\overline{\psi}}{\|\psi\|_2^2}, \widetilde{\clm_{\clq_{\clz}}}\Big).
\]
This completes the proof of the theorem.
\end{proof}

Next, we formulate an extremal problem based on a given solvable interpolation problem. This approach will ultimately lead to a solution for the original solvable interpolation problem.

\begin{theorem}\label{thm: int vs ext}
Suppose $\clz = \{z_i\}_{i=1}^m \subset \B^n$ and $\clw = \{w_i\}_{i=1}^m \subset \D$ are interpolation data, with the $w_i$ not all equal. If $\clz$ and $\clw$ solve the interpolation problem, then $\clz$ and $\widehat{\clw}$ solve the extremal problem, where
\[
\widehat{\clw} = \{\widehat{w}_1, \ldots, \widehat{w}_m\} \subset \D,
\]
and
\[
\widehat{w}_j = \frac{1}{\|\Pi_{\clz, {\clw}}\|} w_j,
\]
for all $j=1, \ldots, m$.
\end{theorem}
\begin{proof}
Note that $\Pi_{\clz, \clw}$ is a nonzero bounded linear functional on $\clm_{\clq_\clz}$. Since the interpolation problem is solvable, Theorem \ref{thm: NP interpolation contr} implies that
\[
\|\Pi_{\clz, \clw}\| \leq 1.
\]
First, we claim that
\[
\widehat{\clw} = \{\widehat{w}_1, \ldots, \widehat{w}_m\} \subseteq \D,
\]
where
\[
\widehat{w}_j = \frac{1}{\|\Pi_{\clz, \clw}\|} w_j.
\]
for $j = 1, \ldots, m$. Indeed, by Proposition \ref{prop: basic Z and W}, we know that
\[
\psi_{\clz, \clw} = \psi,
\]
where $\psi = X_{\clz, \clw}(P_{\clq_{\clz}}1)$, and $X_{\clz, \clw} \in \clb(\clq_{\clz})$ is the module map defined by \eqref{eqn: X* zw}, namely,
\[
X_{\clz, \clw}^* \Sk(\cdot, z_j) = \overline{w_j} \Sk(\cdot, z_j),
\]
for all $j=1, \ldots, m$. Moreover, \eqref{eqn: X = Pi} implies that
\[
X_{\clq_{\clz}} = \Pi_{\clz, \clw},
\]
where
\[
X_{\clq_{\clz}} f = \int_{\Sn} \psi f d\sigma,
\]
for all $f \in \clm_{\clq_{\clz}}$. Then
\[
\|X_{\clq_{\clz}}\| = \|\Pi_{\clz, \clw}\| \leq 1,
\]
that is, the functional $X_{\clq_{\clz}}: (\clm_{\clq_{\clz}}, \|\cdot\|_1) \raro \mathbb{C}$ is a contraction. By the sufficiency part of Theorem \ref{contr lift}, more specifically, by \eqref{eqn: |theta| = XQ 1}, there exists a Schur function $\vp \in \cls(\B^n)$ such that
\[
X_{\clz, \clw} = S_\vp,
\]
and
\[
\|X_{\clq_{\clz}}\| = \|\vp\|_\infty.
\]
The latter identity implies that
\[
\|\Pi_{\clz, \clw}\| = \|\vp\|_{\infty} \leq 1,
\]
where the first identity, $X_{\clz, \clw} = S_{\vp}$, together with the proof of Theorem \ref{thm: NP interpolation contr} (more specifically, \eqref{eqn: S vp = X ZW}), implies that $\vp$ interpolates the data $\clz$ and $\clw$:
\[
\vp(z_i) = w_i,
\]
for all $i=1, \ldots, m$. In particular, for each $j = 1, \ldots, m$, we have
\[
\|\Pi_{\clz, \clw}\| = \|\vp\|_{\infty} = \sup_{z \in \B^n} |\vp(z)| \geq |\vp(z_j)| = |w_j|,
\]
that is,
\[
|\widehat{w}_j| = \frac{|w_j|}{\|\Pi_{\clz, \clw}\|} \leq 1.
\]
To obtain a contradiction, suppose there exists $k \in \{1, \ldots, m\}$ such that
\[
|\widehat{w_k}| = 1.
\]
By the definition of $\widehat{w_k}$, this means,
\[
|w_k| = \|\Pi_{\clz, \clw}\|,
\]
and hence
\[
|w_k| = \|\Pi_{\clz, \clw}\| = \|\vp\|_{\infty}.
\]
This implies
\[
|\vp(z_k)| = |w_k| = \|\vp\|_{\infty}.
\]
By the maximum modulus principle, $\vp$ must be constant; a contradiction, since not all $w_j$ are equal. Thus, we obtain $\widehat{w}_j \in \D$, for all $j = 1, \dots ,m$. In other words, we have
\[
\widehat \clw = \{\widehat{w}_1, \ldots, \widehat{w}_m\} \subseteq \D.
\]
Now we claim that $\clz$ and $\widehat\clw$ solve the extremal problem. Along with the notation of Theorem \ref{thm: NP interpolation contr}, we proceed as follows. Define module map $X_{\clz, \widehat{\clw}} \in \clb(\clq_\clz)$ by
\[
X_{\clz, \widehat{\clw}}^* \Sk(\cdot, z_j) = \overline{\widehat{w}_j} \Sk(\cdot, z_j).
\]
that is,
\[
X_{\clz, \widehat{\clw}}^* \Sk(\cdot, z_j) = \frac{1}{\|\Pi_{\clz, \clw}\|} X_{\clz, \clw}^* \Sk(\cdot, z_j),
\]
for all $j=1, \ldots, m$. In other words,
\[
X_{\clz, \widehat{\clw}} = \frac{1}{\|\Pi_{\clz, \clw}\|} X_{\clz, \clw}.
\]
As usual, corresponding to the data $\clz$ and $\widetilde{\clw}$, define
\[
\widehat{\psi} = X_{\clz, \widehat{\clw}}(P_{\clq_{\clz}} 1).
\]
Recall that
\[
\psi = X_{\clz, \clw} (P_{\clq_\clz} 1),
\]
and hence
\[
\psi = \|\Pi_{\clz, \clw}\| X_{\clz, \widehat{\clw}} (P_{\clq_{\clz}} 1) = \|\Pi_{\clz, \clw}\| \widehat{\psi}.
\]
Similarly, define the functional $\Pi_{\clz, \widehat{\clw}}: (\clm_{\clq_\clz}, \|\cdot\|_1) \raro \mathbb{C}$ by
\[
\Pi_{\clz, \widehat{\clw}} f = \int_{\Sn} \widehat{\psi} f d\sigma,
\]
for all $f \in \clm_\clq$. Therefore,
\[
\Pi_{\clz, \widehat{\clw}} f = \frac{1}{\|\Pi_{\clz, \clw}\|} \Pi_{\clz, \clw} f,
\]
for all $f \in \clm_{\clq_{\clz}}$, and hence
\[
\|\Pi_{\clz, \widehat{\clw}}\| = 1.
\]
Consequently, Theorem \ref{thm: extremal} implies that $\clz$ and $\widehat{\clw}$ solve the extremal problem.
\end{proof}

Note that the assumption that the $w_i$ are not all equal in the above is essential. Indeed, if they are all equal, then the construction of $\widehat{\clw}$ yields a singleton set. Consequently, the interpolation data $\clz$ and $\widehat{\clw}$ do not solve the extremal problem.

The above result has an important consequence. For each $m \geq 1$, define (see Section \ref{sec: Carath approx})
\[
\cle_m(\B^n) = \{\vp \in \cls(\B^n): \vp \text{ is a solution to some } m \text{-point extremal problem}\}.
\]

\begin{corollary}\label{cor: int vs ext}
Suppose $\clz = \{z_i\}_{i=1}^m \subset \B^n$ and $\clw = \{w_i\}_{i=1}^m \subset \D$ are interpolation data, with the $w_i$ not all equal. Suppose that $\clz$ and $\clw$ solve the interpolation problem. Define
\[
\lambda_{\clz, \clw} = \inf \{\|\eta\|_\infty: \eta \in \cls(\B^n) \text{ interpolates } \clz \text{ and } \clw\}.
\]
Then $\lambda_{\clz, \clw} \in (0,1]$, and there exists an extremal function $\vp \in \cle_m(\B^n)$ such that
\[
\lambda_{\clz, \clw} \vp,
\]
interpolates $\clz$ and $\clw$.
\end{corollary}
\begin{proof}
Since $\clz$ and $\clw$ solve the interpolation problem, Theorem \ref{thm: int vs ext} implies that $\clz$ and $\widehat{\clw}$ solve the extremal problem. That is, there exists $\vp \in \cle_m(\B^n)$ such that
\[
\vp(z_i) = \widehat{w}_i,
\]
for all $i=1, \ldots, m$. Recall that
\[
\widehat{w}_j = \frac{1}{\|\Pi_{\clz, {\clw}}\|} w_j,
\]
for all $j=1, \ldots, m$, where $\Pi_{\clz, {\clw}}$ is the functional associated with the data $\clz$ and $\clw$. Recall, by Theorem \ref{thm: NP interpolation contr}, that
\[
\|\Pi_{\clz, {\clw}}\| \leq 1.
\]
Set
\[
\lambda_{\clz, \clw} = \|\Pi_{\clz, {\clw}}\|,
\]
and define
\[
\widetilde{\vp} = \lambda_{\clz, \clw} \varphi.
\]
Clearly, $\widetilde{\vp} \in \cls(\B^n)$ and interpolates the data $\clz$ and $\clw$. Finally, recall from Theorem \ref{thm: norm XQ} the norm expression of $\Pi_{\clz, \clw}$ as
\[
\|\Pi_{\clz, \clw}\| = \inf \{\|\theta\|_\infty: \theta \in \cls(\B^n), S_\theta = X_{\clz, \clw}\}.
\]
On the other hand, by \eqref{eqn: S vp = X ZW}, for every $\theta \in \cls(\B^n)$,
\[
S_\theta = X_{\clz, \clw},
\]
if and only if $\theta$ interpolates $\clz$ and $\clw$. Consequently,
\[
\|\Pi_{\clz, \clw}\|  = \inf \{\|\eta\|_\infty: \eta \in \cls(\B^n) \text{ interpolates } \clz \text{ and } \clw\},
\]
and hence
\[
\lambda_{\clz, \clw} = \|\Pi_{\clz, \clw}\|.
\]
Finally, we have
\[
\|\widetilde{\vp}\|_\infty = \|\lambda_{\clz, \clw} \varphi\|_\infty = \lambda_{\clz, \clw} = \|\Pi_{\clz, \clw}\|.
\]
This completes the proof of the corollary.
\end{proof}

Note that $\lambda_{\clz,\clw}\vp$ is a minimum-norm interpolant for the data $\clz$ and $\clw$. The above result is particularly beneficial if one has some description of $\cle_m(\B^n)$. To this end, in Section \ref{Sec: 3 point int}, we will apply this result to a specific case, such as the three-point interpolation problem. Moreover, this result may be viewed as an analogue of the classical interpolation results for the one-variable case. In particular, compare it with Corollary 1.8 in \cite[page 132]{Garnett}.

\newsection{Examples of lifting}\label{Sec: examples}

The goal of this section is to investigate the lifting problem through concrete examples. Specifically, we present examples of contractive module maps that admit a lifting, as well as examples that do not. This allows us, in particular, to apply the preceding results to concrete situations. Recall that an analytic function $f: \B^n \raro \C$ can be expressed as a power series
\[
f(z) = \sum_{\alpha \in \Z_+^n} a_\alpha z^{\alpha},
\]
The notion of degree for polynomials in $\C[z_1, \dots, z_n]$ is already defined at the beginning of Section \ref{sec: Carath interp}. Given $m \in \mathbb{N}$, define finite-dimensional quotient module $\clq_m$ of $H^2(\B^n)$ by (see Section \ref{sec: Carath interp})
\[
\clq_m = \{ p \in \C[z_1, \dots, z_n] : \text{deg } p \leq m\}.
\]
Fix a polynomial $p \in \C[z_1, \dots, z_n]$, and consider the module map
\[
S_p = P_{\clq_m} T_p|_{\clq_m}.
\]
We are interested in the lifting of $S_p$ under the assumption that $\|p\|_2 = 1$.

\begin{theorem}\label{thm: lift poly}
Assume $n > 1$. Let $p \in \C[z_1, \dots, z_n]$, and let $\text{deg }p \leq m$. Suppose
\[
\|p\|_2 = 1.
\]
Then $S_p \in \clb(\clq_m)$ admits a lift if and only if $p$ is a unimodular constant.
\end{theorem}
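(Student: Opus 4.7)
The backward implication is immediate: if $p = c$ with $|c| = 1$, then $S_p = c\,I_{\clq_m} = S_\vp$ for $\vp = c \in \cls(\B^n)$. For the forward implication, my plan is to chain the perturbation characterization of lifting (Theorem \ref{lift lemma}) with a classification of polynomial inner functions on the ball.

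Assume $S_p$ admits a lift. Since $p \in \clq_m$, the canonical function $\psi := S_p(P_{\clq_m} 1)$ reduces to $P_{\clq_m}(p) = p$, so Theorem \ref{lift lemma} furnishes $\vp \in \cls(\B^n)$ with $\vp - p \in \clq_m^{\perp}$. Writing $\vp = p + g$ with $g \perp \clq_m$ in $H^2(\B^n)$, and using that $\sigma$ is a probability measure, I obtain
\[
1 + \|g\|_2^2 = \|p\|_2^2 + \|g\|_2^2 = \|\vp\|_2^2 \leq \|\vp\|_\infty^2 \leq 1,
\]
so $g = 0$, $\vp = p$, and in particular $p \in \cls(\B^n)$ with $\|p\|_\infty \leq 1$. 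Equality in the $L^2 \leq L^\infty$ estimate combined with $\|p\|_2 = 1$ then forces $|p(\zeta)| = 1$ for $\sigma$-a.e.\ $\zeta \in \Sn$; that is, $p$ is a polynomial inner function on $\B^n$.

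The final, and hardest, step is to conclude that such a polynomial is necessarily a unimodular constant, and this is where the several-variable geometry of the ball plays a decisive role. I would split on the zero set $V(p) \subset \C^n$. If $V(p) \cap \overline{\B^n} = \emptyset$, then $1/p$ is holomorphic on an open neighborhood of $\overline{\B^n}$ with $|1/p| = 1$ on $\Sn$; applying the maximum modulus principle to both $p$ and $1/p$ pins $|p| \equiv 1$ on $\B^n$, so $p$ is constant. Otherwise $p$ has a zero $z_0$, and because $|p| \equiv 1$ on $\Sn$ we must have $z_0 \in \B^n$. The irreducible component $V_0$ of $V(p)$ through $z_0$ is then a connected analytic set of pure complex dimension $n-1 \geq 1$, and since no positive-dimensional analytic subvariety of $\C^n$ is compact, $V_0$ is unbounded; connectedness of $V_0$ together with $z_0 \in V_0 \cap \B^n$ thus forces $V_0 \cap \Sn \neq \emptyset$. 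This produces a boundary zero of $p$, contradicting $|p| \equiv 1$ on $\Sn$. Hence only the first case occurs, and $p$ is a unimodular constant.
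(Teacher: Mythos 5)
Your proof is correct, and the overall strategy---reduce to $\vp - p \in \clq_m^\perp$ via the perturbation theorem and squeeze out equality conditions to force $p$ inner---is sound, but the second half takes a genuinely different route from the paper. Where you use Theorem~\ref{lift lemma} (perturbation) together with Pythagoras in $H^2$ and the $L^2 \leq L^\infty$ estimate to get $\vp = p$ and then $|p| \equiv 1$ on $\Sn$, the paper instead goes through Theorem~\ref{contr lift}: it evaluates the functional $X_m$ at $\overline p$, uses $\|X_m\| \leq 1$ to force $\|p\|_1 = \|p\|_2 = 1$, and extracts the equality case of Cauchy--Schwarz in $L^1/L^2$ to conclude $|p|$ is constant. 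Both reach ``$p$ is polynomial and inner.'' The larger difference is the last step: the paper simply cites Rudin's Theorem 4.6, whereas you give a self-contained argument from complex-analytic geometry---either $V(p)$ misses $\overline{\B^n}$ (and then the maximum principle applied to $p$ and $1/p$ pins $p$ to be constant), or $V(p)$ meets $\B^n$, in which case an irreducible component through the zero is a closed, connected, positive-dimensional (hence noncompact, hence unbounded) variety that must cross $\Sn$, contradicting $|p| \equiv 1$ there. This is a nice, elementary replacement for the citation; it buys transparency and independence from Rudin's machinery, at the cost of invoking some basic facts about analytic varieties. One caveat worth flagging: your dimension count $n - 1 \geq 1$ requires $n \geq 2$, so your proof, like the paper's (via the $n>1$ hypothesis inside Rudin's Theorem 4.6), silently assumes $n > 1$. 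This is not a defect of your argument relative to the paper's---indeed the theorem as stated is actually false for $n = 1$, $m \geq 1$ (take $p(z) = z$, which has $\|p\|_2 = 1$ and lifts to $T_z$ but is not a constant)---but it is worth making the implicit $n > 1$ restriction explicit.
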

\begin{proof}
Let $S_p$ admits a lift. We claim that
\[
\|S_p\| = 1.
\]
Indeed, since $S_p$ admits a lift, there exists $\vp \in \cls(\B^n)$ such that
\[
S_p = S_\vp = P_{\clq_m} T_\vp|_{\clq_m},
\]
and hence $\|S_p\| \leq 1$. On the other hand, since $1$ and $p$ are in $\clq_m$, it follows that
\[
S_{p} 1 = P_{\clq_m} p = p,
\]
which implies
\[
\|S_p\| \geq \|S_{p} 1\|_2 = \|p\|_2 = 1.
\]
This proves the claim that $\|S_p\| = 1$. Since $\clq_m$ is finite-dimensional, Theorem \ref{thm: norm 1 no lift} applies, and therefore
\[
S_p= \alpha I_{\clq_m},
\]
for some unimodular constant $\alpha$. The above identity then implies that $p= \alpha$. The converse is obvious.
\end{proof}

For each  $m \geq 1$, define
\[
J_m = \{f \in H^2(\B^n) : f(z) = \sum_{k \in \Z_+^n, |k| \geq m} a_{k} z^k\}.
\]
Equivalently, $J_m$ consists of analytic functions on $\B^n$ whose Taylor series contain no terms of degree less than $m$. The following lemma says, in particular, that $J_m$ is a submodule.

\begin{lemma}\label{J(m) submodule}
$J_{m+1} = \clq_m^{\perp}$ for all $m \geq 1$.
\end{lemma}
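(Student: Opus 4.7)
The plan is to use orthogonality of the monomial basis together with the observation that $\clq_m$ is the linear span of monomials of degree at most $m$. First I would record that every $f \in H^2(\B^n)$ admits a Taylor expansion $f(z) = \sum_{k \in \Z_+^n} a_k z^k$ converging in the Hilbert space norm, and that the family $\{z^k : k \in \Z_+^n\}$ is an orthogonal basis, so that in particular $\langle f, z^k \rangle = a_k \|z^k\|_2^2$ for every multi-index $k$.

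Next, I would point out that $\clq_m = \mathrm{span}\{z^k : k \in \Z_+^n,\ |k| \leq m\}$, which is immediate from the definition of polynomial degree. Consequently, $f \in \clq_m^\perp$ if and only if $\langle f, z^k \rangle = 0$ for every $k$ with $|k| \leq m$. By the orthogonal basis property, this is equivalent to $a_k = 0$ for every such $k$, i.e.\ the Taylor series of $f$ has only terms of degree at least $m+1$. Recognizing this last condition as precisely the definition of $J_{m+1}$ gives the desired identity.

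Since both inclusions are obtained by the same chain of equivalences, no extra work is needed for the reverse direction: if $f \in J_{m+1}$ then its Taylor coefficients vanish for $|k| \leq m$, which forces $\langle f, z^k \rangle = 0$ on a spanning set of $\clq_m$ and hence $f \perp \clq_m$. I do not foresee any substantive obstacle; the entire argument amounts to unpacking definitions and invoking Parseval/orthogonality for the standard monomial basis of $H^2(\B^n)$, so the proof should consist of a short two-or-three-line chain of ``if and only if'' statements.
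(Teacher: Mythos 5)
Your argument is correct and matches the paper's intended proof: the paper likewise treats this as a direct consequence of the fact that $\{z^k : k \in \Z_+^n\}$ is an orthogonal basis for $H^2(\B^n)$, so that orthogonality to $\clq_m$ is equivalent to vanishing of Taylor coefficients up to degree $m$. No gap to note.
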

\begin{proof}
This is a simple consequence of the fact that $\{z^k: k \in \Z_+^n\}$ forms an orthogonal basis for $H^2(\B^n)$.
\end{proof}

We now recall a perturbation theorem of Rudin \cite[Theorem 4.3]{Rud inner} concerning inner functions on $\B^n$, specialized here to the case of the ball algebra. This result will be applied to construct nontrivial examples.

\begin{theorem}[Rudin]\label{Rudin inner existence}
Let $g$ be a nonzero function in $A(\B^n)$ and let $m \in \mathbb{N}$. Assume that
\[
\|g\|_{\infty} < 1.
\]
Then there exists an inner function $u \in H^\infty(\B^n)$ such that $u$ and $g$ have the same zeros in $\B^n$ and
\[
u - g \in J_m.
\]
\end{theorem}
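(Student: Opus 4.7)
The plan is to attack this by handling the two constraints on $u$ separately: matching the zero set of $g$ in $\B^n$, and matching the initial block of Taylor coefficients (which is exactly what $u - g \in J_m$ encodes, since $J_m$ consists of holomorphic functions vanishing to order $m$ at the origin). The first is an analytic-geometric condition, while the second is a finite-dimensional linear constraint, so the two live at rather different ``scales'' and are natural candidates for a two-step construction.

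My first attempt would be a multiplicative decomposition $u = v \cdot w$, where $v$ is an inner function with $Z(v) \cap \B^n = Z(g) \cap \B^n$ and $w$ is a second inner function chosen so that $v w$ agrees with $g$ up to order $m-1$ at the origin. The existence of such $v$ would rely on the now-standard theory of zero sets of inner functions on $\B^n$ developed by Aleksandrov, L\"ow, and Rudin; since $g \in A(\B^n)$ its zero set $Z(g) \cap \B^n$ is mild enough to be an $H^\infty$-zero set, hence the zero set of some inner function. For the choice of $w$, I would first apply the $w^*$-density of inner functions in $\cls(\B^n)$ (Theorem \ref{thm : inner weak* dense}) to the target $g/v$, which yields approximate matching of the first $m$ Taylor coefficients. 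The aim would then be to upgrade from approximate to exact matching by a continuity/implicit-function argument on a sufficiently rich finite-dimensional parameter family of Aleksandrov-type inner functions, arranged so that the map from parameters to the finite tuple of Taylor coefficients is locally surjective near $g/v$.

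An alternative additive route would be to seek $u = g + f$ with $f \in J_m$ such that $|g + f| = 1$ almost everywhere on $\Sn$, which expands to the pointwise identity $|f|^2 + 2 \operatorname{Re}(\bar g f) = 1 - |g|^2$ on $\Sn$. The hypothesis $\|g\|_\infty < 1$ forces the right-hand side to be uniformly bounded below by $1 - \|g\|_\infty^2 > 0$, so there is genuine room to solve, and one would hope to construct $f$ via an Aleksandrov-style process engineered from the outset to vanish to order $m$ at the origin, e.g.\ by starting from a free inner function and composing or multiplying against a polynomial factor that kills the low-degree Taylor terms.

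The principal obstacle, in either route, is the simultaneous control of the interior zero set and the exact matching of finitely many Taylor coefficients while preserving the boundary unimodularity that makes the output inner. In the multiplicative version, the quotient $g/v$ need not be bounded on $\B^n$ (since $v$ may be small in the interior even when it is unimodular on $\Sn$), so one must either arrange for $v$ to be bounded away from $0$ on some relevant subset or phrase the matching condition directly in terms of the product $vw$ rather than $w$ alone. In the additive version one must solve a nonlinear boundary equation together with a finite-dimensional interior constraint in a single step. This is precisely the juncture at which the fine Aleksandrov--Rudin machinery for constructing inner functions on $\B^n$ with prescribed behavior becomes indispensable, and where my strategic sketch would have to be replaced by the genuine construction underlying the cited theorem of Rudin.
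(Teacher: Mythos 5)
This statement is not proved in the paper: it is explicitly attributed to Rudin and cited as \cite[Theorem 4.3]{Rud inner}, specialized to the ball algebra. So there is no in-paper argument for your sketch to be measured against; the paper's ``proof'' is the citation, and your attempt is really a reconstruction of Rudin's argument.

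Your additive route is the right spirit, but it asks for too much in a single step. Rudin does not solve the nonlinear boundary equation $|g+f|=1$ a.e.\ with $f \in J_m$ outright. He iterates an approximation lemma: given nonzero $f \in A(\B^n)$ with $\|f\|_\infty < 1$ and $\epsilon > 0$, there is $h \in A(\B^n)$ with $h - f \in J_m$, $\|h\|_\infty < 1$, $h$ and $f$ sharing the same zeros in $\B^n$, and $|h| > 1-\epsilon$ outside a set of $\sigma$-measure less than $\epsilon$. Starting from $g$ and iterating with $\epsilon_k \to 0$ yields a sequence $g_k$ whose zeros and whose Taylor coefficients of degree $< m$ are frozen, and which converges (in $H^2$ and a.e.\ on $\Sn$) to the desired inner $u$. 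Boundary unimodularity is reached only in the limit, never at a finite stage; your one-shot formulation misses this essential asymptotic structure, and is a strictly harder problem than what Rudin actually solves.

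The multiplicative route $u=vw$ has two problems, only one of which you flag. You correctly note that $g/v$ need not be bounded, which kills the $w^*$-density argument on that quotient. But the prior step is also circular: the assertion that $Z(g) \cap \B^n$ is the zero set of an inner function because $g \in A(\B^n)$ is not an off-the-shelf fact that predates this theorem. For $n>1$, the only general device producing inner functions with prescribed ball-algebra zero sets is the Aleksandrov--L{\o}w--Rudin construction itself, and ``$Z(g)$ is an inner zero set for nonzero $g \in A(\B^n)$'' is exactly the $m=1$ corollary of the theorem you are trying to prove (after scaling to $\|g\|_\infty<1$). So the decomposition presupposes the result. The ``implicit function argument on a parameter family of inner functions'' to pass from approximate to exact coefficient matching is also too vague to carry weight, as the set of inner functions is not a manifold of any useful kind; it is the iterative freezing of finitely many coefficients, built into the construction from the start, that does this job in Rudin's proof.
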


The following result provides the first application of Rudin's theorem. In particular, it yields inner liftings of certain contractive module maps.

\begin{example}
Fix $m \in \mathbb{N}$, and set
\[
\Lambda  = \{k \in \Z_+^n : |k| \leq m\}.
\]
Choose a finite set of scalars
\[
\{a_k: k\in \Lambda\} \subseteq \mathbb{C}.
\]
Assume that
\[
0 < \sum_{|k| \leq m} |a_k| < 1.
\]
Then
\[
p(z) := \sum_{|k| \leq m} a_k z^k,
\]
is a nonzero polynomial in $\clq_m$. Consider the module map $S_p \in \clb(\clq_m)$. As $p \in \clq_m$, we have
\[
S_p 1 = P_{\clq_m} T_p 1 = p.
\]
On the other hand, for $z \in \B^n$, since
\[
|p(z)| \leq \sum_{|\alpha| \leq m} |a_{\alpha} z^\alpha| \leq \sum_{|\alpha| \leq m} |a_{\alpha}| = \sum_{\alpha \in \Lambda} |a_{\alpha}|,
\]
we conclude that
\[
\|p\|_{H^{\infty}(\B^n)} \leq \sum_{\alpha \in \Lambda} |a_{\alpha}| < 1.
\]
By Rudin, Theorem \ref{Rudin inner existence}, there exists an inner function $u \in H^\infty(\B^n)$ such that
\[
u - p \in J_{m+1}.
\]
In view of Lemma \ref{J(m) submodule}, we know that $J_{m+1} = \clq_m^\perp$. In other words, we have an inner function $u \in H^\infty(\B^n)$ such that
\[
u - p \in \clq_m^\perp.
\]
Corollary \ref{lift lemma} now implies that $S_p$ lifts to $S_u$.
\end{example}

It is interesting to observe how two seemingly independent results fit together, namely, the perturbation result of Rudin (Theorem \ref{Rudin inner existence}) and the characterization of lifting along the lines of perturbations in Corollary \ref{lift lemma}.

Let $u \in H^\infty(\B^n)$ be an inner function. Since $u H^2(\B^n)$ is a submodule, it follows that
\[
\clq_u:= H^2(\B^n) \ominus u H^2(\B^n),
\]
is a quotient module. This quotient module is traditionally referred to as a model space \cite{NF}. Recall that (see Section \ref{sec: pert})
\[
\cls\clb(\B^n) = \cls(\B^n) \cap A(\B^n).
\]
In the following, we give a criterion of lifting to ball algebra. We remark that this result holds specifically for $n>1$.

\begin{theorem}\label{inner ball}
Let $n > 1$, $u \in H^{\infty}(\B^n)$ be a nonconstant inner function, and let $X \in \clb_1(\clq_u)$ be a module map. Assume that
\[
\psi:= X (P_{\clq_u}1) \in A(\B^n).
\]
Then $X$ admits a lift to $\cls\clb(\B^n)$ if and only if
\[
\|\psi\|_{\infty} \leq 1.
\]
Moreover, in this case, the lifting is unique.
\end{theorem}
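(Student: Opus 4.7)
The plan is to dispatch both implications via the perturbation-type criterion of Corollary \ref{lemma: lift ball algbera}, with the full force of the hypothesis $n>1$ concentrated in a single key assertion: if $u$ is a non-constant inner function on $\B^n$ with $n>1$ and $h\in H^2(\B^n)$ satisfies $uh\in A(\B^n)$, then $h=0$.

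For sufficiency, $\|\psi\|_\infty\leq 1$ together with $\psi\in A(\B^n)$ places $\psi$ in $\mathcal{SB}(\B^n)=A(\B^n)\cap\cls(\B^n)$. Choosing $\vp=\psi$ makes $\psi-\vp=0\in uH^2(\B^n)=\clq_u^\perp$, and Corollary \ref{lemma: lift ball algbera} then exhibits $X=S_\psi$ as the desired lift with symbol in $A(\B^n)$. For necessity, suppose $X$ admits a lift $\vp\in A(\B^n)\cap\cls(\B^n)$. The same corollary gives $\vp-\psi=uh$ for some $h\in H^2(\B^n)$, and since both $\vp,\psi\in A(\B^n)$ we obtain $uh\in A(\B^n)$. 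Granting the key assertion, $h=0$, hence $\vp=\psi$ and $\|\psi\|_\infty=\|\vp\|_\infty\leq 1$.

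To prove the key assertion, I first note that $|u^*|\equiv 1$ almost everywhere on $\Sn$ forces $|h^*|=|(uh)^*|$ a.e., and since $(uh)^*$ is continuous on $\Sn$ (as $uh\in A(\B^n)$) the left-hand side admits a continuous representative; in particular $h\in H^\infty(\B^n)$. Arguing by contradiction, suppose $h\neq 0$. Then the set $W=\{\zeta\in\Sn:|h^*(\zeta)|>0\}$ is a non-empty relatively open subset of $\Sn$. Exploiting that $|h^*|$ is bounded below on a small piece of $W$ and that $(uh)^*$ is continuous and bounded away from $0$ there, I plan to use $K$-limit control of $h$ from below in an admissible approach region to make the identity $u=(uh)/h$, valid on $\B^n$ where $h\neq 0$, yield a continuous extension of $u$ from $\B^n$ across $W$. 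A pluriharmonic reflection of $\log|u|$ across such an open piece of $\Sn$ would then propagate $|u|\equiv 1$ to an open neighborhood of $W$ in $\overline{\B^n}$, forcing $u$ to be a unimodular constant by analytic continuation and contradicting the non-constancy hypothesis on $u$.

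The principal obstacle is precisely the boundary-extension step, since continuity of $|h^*|$ does not automatically yield continuity of $h^*$, and so the ratio $(uh)/h$ cannot be handled naively. The delicate point is to argue that $|h|$ stays bounded below throughout a full boundary neighborhood of $W$ in $\overline{\B^n}$ (rather than merely along admissible cones), so that $(uh)/h$ inherits the uniform continuity of $uh$ up to the boundary on $W$ and thereby provides the forbidden continuous extension of $u$.
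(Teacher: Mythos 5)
Your reduction to the perturbation criterion of Corollary \ref{lemma: lift ball algbera} for both implications is exactly what the paper does, and you correctly isolate as the crux the assertion that, for $n>1$, a non-constant inner function $u$ and a nonzero $h\in H^2(\B^n)$ cannot satisfy $uh\in A(\B^n)$. The paper simply invokes Rudin's Theorem 4.6 from \cite{Rud inner}, which is precisely this statement (stated there for $h$ in the Nevanlinna class, hence a fortiori for $h\in H^2(\B^n)$). You instead try to reprove it, and the attempt has a real gap that you yourself flag: the passage from admissible ($K$-limit) lower bounds on $|h|$ to a lower bound on $|h|$ throughout a full one-sided neighborhood of $W$ in $\overline{\B^n}$ is exactly the step needed to conclude that $(uh)/h$ furnishes a continuous boundary extension of $u$ across $W$, and it is not carried out.

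A second, more structural problem with the sketch is that the hypothesis $n>1$ never visibly enters. As written, the same chain of reasoning would "prove" the key assertion for $n=1$, where it is false: take $u=z$ and $h=1$, so $uh=z\in A(\D)$ while $u$ is non-constant inner. Any correct proof of the key assertion must appeal in an essential way to higher-dimensional phenomena (for instance Hartogs-type extension, or the non-existence of non-constant inner functions in $A(\B^n)$ for $n\geq 2$), not to a one-variable reflection principle, which is available for $n=1$ yet produces no contradiction there. The clean fix is to do what the paper does and cite Rudin's Theorem 4.6; the remainder of your argument then matches the paper's proof.
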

\begin{proof}
Suppose $X$ lifts to $A(\B^n)$. By Corollary \ref{lemma: lift ball algbera}, there exists $\vp \in A(\B^n) \cap \cls(\B^n)$ such that
\[
\psi - \vp \in \clq_u^{\perp} = uH^2(\B^n).
\]
Since $\vp$ and $\psi$ are in $A(\B^n)$, it follows that $\psi - \vp \in A(\B^n)$, and hence
\[
\psi - \vp \in A(\B^n) \cap u H^2(\B^n).
\]
Assume, for contradiction, that $\psi \neq \vp$. From the above, there exists a nonzero $g \in H^2(\B^n)$ such that
\[
\psi - \vp = u g.
\]
However, this is impossible, in view of \eqref{eqn: Rudin Smirnov} (which is again a result of Rudin; see \cite[Theorem 4.6]{Rud inner}). This contradiction shows that $\psi = \vp$. Consequently, $\|\psi\|_{\infty} = \|\vp\|_{\infty} \leq 1$. The converse is immediate by Corollary \ref{lemma: lift ball algbera}.
\end{proof}

To better illustrate the theorem, we now construct a concrete example satisfying its hypotheses.

\begin{example}\label{ball lift example}
Fix $m \in \mathbb{N}$ and multi-index $k \in \Z_{+}^n$ such that
\[
|k| = m+1.
\]
Consider the polynomial
\[
p(z) = \frac{1}{2} z^k \in H^2(\B^n).
\]
By Rudin, Theorem \ref{Rudin inner existence}, there exists a non-constant inner function $u \in H^{\infty}(\B^n)$ such that
\[
u - p \in J_{m+1} = \clq_m^\perp.
\]
By our choice, we have $p \in \clq_m^\perp$, and hence
\[
u = p + (u - p) \in \clq_m^\perp.
\]
This implies $u H^2(\B^n) \subseteq \clq_m ^\perp$, equivalently, $\clq_m \subseteq \clq_u$. Let $q \in \C[z_1, \dots , z_n]$ with $\text{deg }q \leq m$, and consider the module map $X = S_{q} \in \clb(\clq_u)$ defined by
\[
S_{q} f = P_{\clq_u} q f
\]
Then,
\[
XP_{\clq_u}1 = S_{q} P_{\clq_u} 1 = P_{\clq_u} T_q P_{\clq_u} 1 = q \in A(\B^n).
\]
By Theorem \ref{inner ball}, $X$ admits a lift to $A(\B^n)$ if and only if
\[
\|q\|_{\infty} \leq 1.
\]
\end{example}

The existence and structural properties of inner functions have played a fundamental role throughout this paper. We conclude this section with yet another application of this existence.

As pointed out, quotient modules of $H^2(\B^n)$ for $n > 1$ are, by far, intricate objects. In what follows, we focus on a particular class of module maps that admit liftings. Remarkably, these liftings exhibit a rigidity phenomenon, which we discuss after the theorem.

We remark that any nonconstant inner function is contained in some nontrivial quotient module. To see this, fix a nonconstant inner function $\varphi \in H^\infty(\B^n)$. Consider the submodule
\[
\cls = \varphi H^2_0(\B^n),
\]
where $H^2_0(\B^n) = \{f \in H^2(\B^n): f(0) = 0\}$. Let $\clq = \cls^\perp$. Then $\clq$ is a quotient module that contains $\varphi$. Indeed, for $f \in H^2_0(\B^n)$, we have
\[
\la \varphi, \varphi f \ra = \la 1, f \ra = \overline{f(0)} = 0.
\]
The above argument ensures that the hypothesis in the following theorem is non-vacuous.

\begin{theorem}\label{lifiting inner multiple}
Let $\clq \subseteq H^2(\B^n)$ be a quotient module containing a nonconstant inner function $\vp \in \cls(\B^n)$, and let $\lambda \in \C$. Then the module map
\[
S_{\lambda \vp} = P_\clq T_{\lambda \vp}|_\clq,
\]
on $\clq$ admits a lift if and only if
\[
|\lambda| \leq 1.
\]
\end{theorem}
\begin{proof}
Following Theorem \ref{contr lift}, we write $X = S_{\lambda \vp}$ and $\psi = X (P_{\clq} 1)$. We know that $\lambda \vp \in \clq$, and hence
\[
\psi = S_{\lambda \vp} P_{\clq} 1 = P_{\clq}  T_{\lambda \vp} 1 = \lambda \vp.
\]
Then the functional $X_{\clq} : \clm_{\clq} \raro \C$ in Theorem \ref{contr lift} simplifies to
\[
X_{\clq} f = \int_{\Sn} \psi f d\sigma = \int_{\Sn} \lambda \vp f d\sigma = \lambda \int_{\Sn} \vp f d \sigma,
\]
and hence
\[
|X_{\clq} f| = |\lambda| \Big|\int_{\Sn} \vp f d\sigma \Big| \leq |\lambda| \int_{\Sn} |f| d\sigma = |\lambda| \|f\|_{L^1(\Sn)},
\]
for all $f \in \clm_\clq$. On the other hand, since $\vp$ is inner, we have
\[
|X_{\clq} \overline{\vp}| = |\lambda| \int_{\Sn} \vp \bar{\vp} d \sigma  = |\lambda| \int_{\Sn} |\vp|^2 d \sigma = |\lambda|,
\]
and therefore
\[
\|X_{\clq}\| = |\lambda|.
\]
Hence, by Theorem \ref{contr lift}, $X$ admits a lift if and only if $|\lambda| \leq 1$.
\end{proof}

For a quotient module $\clq \subseteq H^2(\B^n)$ and a function $\vp \in \cls(\B^n)$, the module map
\[
X := S_\vp,
\]
always admits a lift, namely $T_\vp$ on $H^2(\B^n)$. However, it may happen that $\vp$ is not a Schur function (that is, $\vp \notin \cls(\B^n)$), but there exists $\psi \in \cls(\B^n)$ such that
\[
X = S_\vp = S_\psi.
\]
Theorem \ref{lifiting inner multiple} shows that, under the stated assumptions, $X = S_{\lambda \vp}$ does not admit the aforementioned property. This rigidity phenomenon is a rather distinctive feature of the lifting theorem of the above class of module maps.

At the end of the proof of Corollary \ref{cor: norm preserving}, we remarked that module maps on infinite-dimensional quotient modules may admit norm-preserving lifts. This stands in contrast to the finite-dimensional case, where Corollary \ref{cor: norm preserving} shows that a module map does not admit a norm-preserving lift whenever $n>1$.

We clarify the infinite-dimensional quotient module aspect. For instance, fix a nonconstant inner function $\varphi$, and choose a quotient module $\clq$ containing $\vp$. Consider the module map $X = S_{\varphi}$ on $\clq$. Clearly $X$ admits a lift, namely $T_\vp$. Moreover, we have
\[
\|X\| = 1 = \|\varphi\|_{\infty}.
\]
On the other hand, since $\varphi$ is inner, it follows from \eqref{eqn: Rudin Smirnov} that
\[
\varphi \notin A(\B^n).
\]
Since $\varphi \in \clq$, it follows that
\[
\clq \nsubseteq A(\B^n).
\]
Consequently, $\clq$ is infinite-dimensional (see the discussion preceding \eqref{eqn: Rudin Smirnov}; see also \cite[Corollary 3.4]{GZ} and  \cite[Chapter 2, Remark 2.5.5]{CG}).

\newsection{Three-point interpolation}\label{Sec: 3 point int}

This section is devoted to some concrete examples of interpolation problems. An interpolation problem is said to be a \textit{three-point interpolation problem} if the interpolation data set $\clz$ consists of three points: $\clz = \{z_1, z_2, z_3\} \subset \B^n$. As usual, in this case, we write the collection of target values as $\clw = \{w_1, w_2, w_3\} \subset \D$. We begin by illustrating a solution to a three-point interpolation problem.

Define
\[
\clf_3(\B^n) = \clf_{D} \cup \clf_{ND},
\]
where
\[
\clf_{D}= \Big\{x \mapsto \frac{2 x_1(1-\tau x_1) - \overline{\tau} \omega^2 x_2^2}{2(1- \tau x_1) - \omega^2 x_2^2}: |\tau| = 1, |\omega| \leq 1 \Big\},
\]
and
\[
\clf_{ND}= \Big\{x \mapsto \frac{x_1^2}{2 - a^2} + \frac{2 \sqrt{1 - a^2} x_2}{2 - a^2} : a \in [0, 1) \Big\}.
\]
In the above, we represented $x$ as $(x_1, \dots ,x_n)$. In \cite[Theorem 2]{K}, Kosi\'nski and Zwonek proved the following result: If the three-point Pick interpolation problem is extremal, then, up to a composition with automorphisms of $\B^n$ and $\D$, it is interpolated by a function belonging to $\clf_3(\B^n)$.

The result of Kosi\'nski and Zwonek, together with Corollary \ref{cor: int vs ext}, yields a concrete representation of at least one solution to a given three-point interpolation problem:

\begin{theorem}\label{thm: 3point rational}
Suppose $\clz = \{z_1, z_2, z_3\} \subset \B^n$ and $\clw = \{w_1, w_2, w_3\} \subset \D$ are interpolation data, with the $w_i$ not all equal. If $\clz$ and $\clw$ solve the interpolation problem, then there exist a scalar $\lambda \in (0,1]$ and extremal function $\vp$ such that
\[
\lambda \vp,
\]
solve the interpolation problem for $\clz$ and $\clw$. Moreover, up to automorphisms of $\B^n$ and $\D$, the function $\vp$ may be chosen from
\[
\vp \in \clf_3(\B^n),
\]
and the scalar $\lambda$ may be chosen to satisfy the minimality condition
\[
\lambda
=
\inf\bigl\{\|\eta\|_\infty : \eta \in \cls(\B^n)
\text{ interpolates } \clz \text{ and } \clw\bigr\}.
\]
\end{theorem}

The final part of the above result follows the same lines of proof of Corollary \ref{cor: int vs ext}. Now we consider a definite $3$-point interpolation problem.

\begin{example}\label{ex: 3 point}
Let $n > 2$. Denote by $\{e_j\}_{j=1}^n$ the standard orthonormal basis of $\C^n$, and define
\[
\lambda = \sqrt{1 - \Big(\frac{3}{8}\Big)^{\frac{1}{n}}},
\]
and
\[
z_j = \lambda e_j,
\]
for all $j=1,2,3$. Let
\[
M = \max \{\|\Sk(\cdot, z_j)\|_{\infty}: j =1,2,3\}.
\]
Note that $M > 1$. Finally, define
\[
\begin{cases}
w_1 = \frac{1}{2M},
\\
w_2 = \frac{1}{3M},
\\
w_3 = \frac{1}{4M}.
\end{cases}
\]
We now consider the three-point interpolation problem corresponding to the data $\clz = \{z_1, z_2, z_3\}$ and $\clw = \{w_1, w_2, w_3\}$. As in Theorem \ref{thm: NP interpolation contr}, we let
\[
\psi_{\clz, \clw} = c_1 \Sk(\cdot, z_1) + c_2 \Sk(\cdot, z_2) + c_3 \Sk(\cdot, z_3),
\]
where the scalars $c_1, c_2$, and $c_3$ are given by
\[
\begin{bmatrix}
c_1
\\
c_2
\\
c_3
\end{bmatrix}
=
\begin{bmatrix}
\Sk(z_i, z_j)
\end{bmatrix}_{3 \times 3}^{-1}
\begin{bmatrix}
w_1
\\
w_2
\\
w_3
\end{bmatrix}.
\]
Since $\la z_i, z_j \ra = \lambda^2 \delta_{ij}$, and $\Sk(z, w) = \frac{1}{(1 - \la z, w\ra)^n}$ for all $z, w \in \B^n$, it follows that
\[
\Sk(z_t, z_t) = \frac{1}{(1 - \la z_t, z_t \ra)^n} = \frac{1}{(1 - \lambda^2)^n},
\]
for all $t = 1, 2,3$, and
\[
\Sk(z_i, z_j) = 1,
\]
for all $i \neq j$.  Therefore,
\[
[\Sk(z_i, z_j)]_{3 \times 3} =
\begin{bmatrix}
a & 1 & 1
\\
1 & a & 1
\\
1 & 1 & a
\end{bmatrix},
\]
where
\[
a = \frac{1}{(1 - \lambda^2)^n}.
\]
Computing the inverse of the matrix $[\Sk(z_i, z_j)]_{3 \times 3}$, we obtain
\[
\begin{bmatrix}
c_1
\\
c_2
\\
c_3
\end{bmatrix}
= \frac{1}{(a - 1)(a + 2)}
\begin{bmatrix}
(a + 1)w_1 - w_2 - w_3
\\
-w_1 + w_2(a + 1) - w_3
\\
-w_1 - w_2 + (a + 1)w_3
\end{bmatrix}.
\]
This further implies that $c_1, c_2, c_3 > 0$. Now
\[
\begin{split}
\|\psi_{\clz, \clw}\|_{\infty} & \leq c_1 \|\Sk(\cdot, z_1)\|_{\infty} + c_2 \|\Sk(\cdot, z_2)\|_{\infty} + c_3 \|\Sk(\cdot, z_3)\|_{\infty}
\\
& \leq M(c_1 + c_2 + c_3)
\\
& \leq M \frac{1}{(a + 2)} (w_1 + w_2 + w_3)
\\
& = \frac{13}{12(a + 2)}
\\
& < 1,
\end{split}
\]
that is, $\|\psi_{\clz, \clw}\|_{\infty} < 1$. By \eqref{eqn: basic int fn}, the interpolation problem has a solution.
\end{example}

\section{Concluding remarks}\label{sec: remark}

Approximation, interpolation and lifting are central themes in the theory of Hilbert function spaces and are also closely connected to several applied areas of mathematics, including electrical engineering (cf. Helton \cite{Hel1, Hel1}). Results in these directions contribute to the understanding of a wide range of problems, such as the structure of submodules and quotient modules; the theory of linear operators; complex variables; neural networks; and techniques involving reproducing kernel Hilbert spaces, including recent developments in machine learning \cite{Baek, OG, Park, Wer}. It is also important to note that for $n=1$, Schur functions admit useful fractional linear-type representations, also known as realization formulas \cite{AM}. However, various complications arise when attempting to represent Schur functions in higher dimensions. Another fundamental problem concerns the structure of submodules and quotient modules of $H^2(\B^n)$. We hope that the results presented in this paper will contribute to the further development and understanding of these areas. We refer the reader to \cite{AY, Nicolau 2, Nick} for the one-variable theory and its impact on the general theory.

In the passage from one-variable to multivariable Hilbert function spaces, the two most natural domains are the polydisc and the unit ball. The paper \cite{KD} resolved the lifting and interpolation problems on $\D^n$. Some of the qualitative and quantitative features established there also arise in our study of $\B^n$. With this comparison in mind, we make the following general observation:

\begin{enumerate}
\item The construction of the function $\psi_{\clz, \clw}$, as well as the qualitative and quantitative criteria established in this paper, was also obtained in the polydisc setting in \cite{KD}. In the present work, we prove these implications using a combination of techniques; some new and others carefully adapted, with significant modifications, from $\D^n$ setting. A fundamental difference in the ball setting is the absence of a convenient orthonormal basis for $L^2(\Sn)$; in contrast, for $L^2(\mathbb{T}^n)$, the monomials form an orthonormal basis. This feature played a central role in the interpolation and lifting results for $\D^n$ in \cite{KD}, and its absence highlights a key technical challenge in the present context.

\item While working on $\B^n$ (as well as on $\D^n$ \cite{KD}), we apply substantially finer techniques that allow us to avoid many of the subtleties inherent in several-variable function theory. On the one hand, such refined tools are essential for eliminating case-by-case analyses and obtaining a general result. On the other hand, the results also suggest that, when one focuses on specific domains such as $\B^n$ or $\D^n$, there remains considerable scope for sharpening and extending the results presented here, as well as those in the earlier work \cite{KD}, to more specialized settings, such as specific quotient modules or module maps.

\item While working on $\B^n$, we found that results concerning Schur functions and Hilbert function spaces are often considerably more involved than in other standard domains, such as $\D^n$. This may be partly due to the lack of explicit examples of inner functions on $\B^n$. Indeed, as pointed out earlier, many of the results, examples, and counterexamples in this paper rely crucially on the structure of inner functions on $\B^n$.
\end{enumerate}

Before closing, we outline some simple yet useful general observations. First, we note that every module map on a finite-dimensional quotient module admits a lift to a function in $H^\infty(\B^n)$. The crucial point underlying the proof is that these quotient modules are naturally realized inside the ball algebra as noted in \eqref{eqn: fd Q in A}.

\begin{proposition}
Let $\clq$ be a finite-dimensional quotient module of $H^2(\B^n)$, and let $X \in \clb(\clq)$ be a module map. Then there exists $\vp \in H^\infty(\B^n)$ such that
\[
X = S_\vp.
\]
Moreover, $\vp$ can be chosen as
\[
\vp = X(P_\clq 1),
\]
and therefore,
\[
X = S_{X(P_{\clq}1)}.
\]
\end{proposition}
\begin{proof}
As $\clq \subseteq H^2(\B^n)$ is finite-dimensional, it follows from \eqref{eqn: fd Q in A} (which comes from \cite[Corollary 3.4]{GZ} and \cite[Chapter 2, Remark 2.5.5]{CG}) that
\[
\clq \subset A(\B^n) \subset H^{\infty}(\B^n).
\]
In particular, we have
\[
\psi := X(P_{\clq} 1) \in \clq \subseteq A(\B^n),
\]
that is, $\psi \in H^{\infty}(\B^n)$. If we write $0$ as
\[
\psi - \psi = 0 \in \clq^\perp,
\]
then Theorem \ref{lemma: lift ball 1} implies that $X = S_{\psi}$ and completes the proof of the lemma.
\end{proof}

Throughout the paper, the space of mixed functions $\clm(\Sn)$ has played an important role. We now discuss a decomposition of this space that aligns with the homogeneous decomposition studied by Rudin \cite{Rud}. Recall that
\[
\clm(\Sn) = L^2(\Sn) \ominus [H^2(\Sn) \dot+ H^2(\Sn)^{conj}].
\]
For each $p, q \in \Z_+$, define $H(p,q)$ as the vector space of all harmonic homogeneous polynomials on $\C^n$ that have total degree $p$ in the variables $z_1, \ldots, z_n$, and total degree $q$ in the variables $\overline{z_1}, \ldots, \overline{z_n}$. This notion is related to spherical harmonic functions, as discussed in \cite[Section 12.1]{Rud}. In view of the identification
\[
f \leftarrow\raro f|_{\Sn},
\]
one can represent $H(p, q)$ as a vector subspaces of $L^2(\Sn)$ for all $p,q \in \Z_+$. Moreover, by \cite[Theorem 12.2.3]{Rud}, it follows that $L^2(\Sn)$ is the direct sum of the pairwise orthogonal spaces $H(p,q)$, $p,q \in \Z_+$:
\[
L^2(\Sn) = \bigoplus_{p,q \in \Z_+} H(p, q).
\]
Observe $H(p, 0)$ consists of holomorphic homogeneous polynomials of degree $p$ for all $p \in \Z_+$. Moreover
\[
H^2(\Sn) = \bigoplus_{p \in \Z_+} H(p, 0)
\]
Now, by taking conjugates, it follows that
\[
H^2(\Sn)^{conj} = \bigoplus_{q \in \Z_+} H(0, q)
\]
Using these direct sum decompositions, along with that of $L^2(\Sn)$, we finally conclude that
\[
\clm(\Sn) = \bigoplus_{p, q \geq 1} H(p, q)
\]

\begin{remark}
The above description of $\clm(\Sn)$ can be used to provide a slightly different proof of Theorem \ref{contr lift}. However, the proof presented there also has the potential to work in more general cases, such as weighted Bergman spaces.
\end{remark}

Alongside inner functions, extremal functions constitute one of the central themes of this paper. We conclude by emphasizing some compelling reasons why the class of extremal functions deserves special attention. Recall that $\cle(\B^n)$ denotes the class of extremal functions, while $\cle(\D)$ is the set of all finite Blaschke products.

\begin{enumerate}
\item Unlike the $n=1$ case, there is no concrete description or example of inner functions on $\B^n$, and in particular there is no satisfactory analogue or example of finite Blaschke products whenever $n>1$.

\item The class $\cle(\B^n)$ contains many explicit examples that can be constructed via extremal interpolation problems (cf. \cite[Theorem 2]{K}).

\item The Carath\'{e}odory approximation theorem and Pick's theorem on $\B^n$, established in this paper, along with the striking parallels between inner and extremal functions also outlined here, suggest that extremal functions occupy a central place in function theory.
\end{enumerate}

These observations suggest that extremal functions merit a systematic investigation. One fundamental question is to understand their structure and obtain representations, at least for rational extremal functions. From this perspective, it is tempting to conjecture that every extremal function on $\B^n$, for $n>1$, is rational.

\vspace{0.2in}

\noindent\textbf{Acknowledgement:}
The third named author is supported in part by MATRICS grant (ANRF/ARGM/2025/000130/MTR) by ANRF, Department of Science \& Technology (DST), Government of India.

\bigskip

\bibliographystyle{amsplain}

\end{document}